\newtheorem{thm}{Theorem}[section]
\newtheorem{prop}[thm]{Proposition}
\newtheorem{cor}[thm]{Corollary}
\newtheorem{lem}[thm]{Lemma}
\newtheorem{defn}[thm]{Definition}
\newcounter{xpl}[section]
\newenvironment{xpl}{\refstepcounter{xpl}  \medskip \noindent {\bf  Example \thesection.\arabic{xpl}}}{\hfill$\diamondsuit$\mbox{}\bigskip}
\newenvironment{remark}{\refstepcounter{thm} \medskip \noindent {\bf  Remark \arabic{section}.\arabic{thm}}}{\hfill\mbox{}\bigskip}
\newcounter{thmlist}
\newenvironment{thmlist}{\begin{list}{(\roman{thmlist})}{\usecounter{thmlist}\setlength{\leftmargin}{25pt}
\setlength{\itemindent}{0pt}\setlength{\labelwidth}{20pt}\setlength{\labelsep}{5pt}\setlength{\itemsep}{0in}}}{\end{list}}
\newcommand{\C}{\mathbb{C}}
\newcommand{\R}{\mathbb{R}}
\newcommand{\N}{\mathbb{N}}
\newcommand{\Z}{\mathbb{Z}}
\newcommand{\Q}{\mathbb{Q}}
\newcommand{\rps}{\mathbb{R}P}
\newcommand{\cps}{\mathbb{C}P}
\newcommand{\ol}[1]{\bar{#1}}
\newcommand{\sm}[1]{\scriptscriptstyle{#1}}
\newcommand{\contr}{\,\lrcorner\,}
\newcommand{\codim}{\operatorname{codim}}
\newcommand{\cone}{\operatorname{Cone}}
\newcommand{\diff}{\operatorname{Diff}}
\newcommand{\dist}{\operatorname{dist}}
\newcommand{\Aut}{\operatorname{Aut}}
\newcommand{\Hom}{\operatorname{Hom}}
\newcommand{\im}{\operatorname{Im}}
\newcommand{\ric}{\operatorname{Ricci}}
\newcommand{\Ric}{\operatorname{Ric}}
\newcommand{\ind}{\operatorname{Ind}}
\newcommand{\inj}{\operatorname{inj}}
\newcommand{\rank}{\operatorname{rank}}
\newcommand{\Sing}{\operatorname{Sing}}
\newcommand{\inter}{\operatorname{Int}}
\title{A Construction of Complete Ricci-flat K\"{a}hler Manifolds}
\author{Craig van Coevering}
\address{Department of Mathematics, Massachusetts Institute of Technology, 77 Massachusetts Avenue, Cambridge, MA 02139-4307}
\email{craig@math.mit.edu}
\date{December 5, 2007}
\keywords{Calabi-Yau manifold, Sasakian manifold, Einstein metric, Ricci-flat manifold, toric variety}
\subjclass{Primary 53C25, Secondary 53C55, 14M25 }
\begin{document}

\begin{abstract}
We consider an extension of the results of S. Bando, R. Kobyashi, G. Tian, and S. T. Yau on the existence of Ricci-flat
K\"{a}hler metrics on quasi-projective varieties $Y=X\setminus D$ with $\alpha[D]=c_1(X), \alpha >1$.
The requirement that $D$ admit a K\"{a}hler-Einstein metric is generalized to the condition that the
link $S\subset N_D$ in the normal bundle of $D$ admits a Sasaki-Einstein structure in the Sasaki-cone
of the usual Sasaki structure provided the embedding $D\subset X$ satisfies an additional holomorphic condition.
If $D$ is a toric variety, then $S$ always admits a Sasaki-Einstein metric.  As an application we prove that
every small smooth deformation of a toric Gorenstein singularity admits a complete Ricci-flat K\"{a}hler metric asymptotic
to a Calabi ansatz metric.  Some examples are given which were not previously known.
\end{abstract}

\maketitle

\section{introduction}

The purpose of this article is to extend the theorem due to G. Tian and S.-T. Yau,
and independently by S. Bando and R. Kobyashi, which gives the existence of a complete Ricci-flat K\"{a}hler metric on a quasi-projective manifold $Y=X\setminus D$ under the assumption that $D$ admits a K\"{a}hler-Einstein metric.
Here $X$ is a projective variety, and $D$ is irreducible and supports the anti-canonical divisor.
Further, $X$ and $D$ are required to be K\"{a}hler orbifolds.  An assumption that $X$ and $D$ are manifolds would
restrict the examples of smooth $Y=X\setminus D$ with a Ricci-flat metric that this technique produces.
For the definition of a K\"{a}hler orbifold and the notions of divisors and line bundles on orbifolds see
~\cite{Ba}.  We first review the original result.

Let $X$ be a compact K\"{a}hler orbifold, with $\dim_\C X=n$, and with $\dim_\C(\Sing X)\leq n-2$.
Suppose there is a divisor $D\subset X$ such that $\alpha[D]=-K_X$, with $\alpha >1$.
We will need the following.
\begin{defn}\label{defn:good-div}
Let $D$ be a divisor on a compact K\"{a}hler orbifold.  Then
\begin{thmlist}
    \item $D$ is \emph{admissible} if $\Sing X\subset D$ and for any local uniformizing chart
    $\pi:\tilde{U}\rightarrow U$ at $x\in D$, $\pi^{-1}(D)$ is smooth in $\tilde{U}$.
    \item $D$ is \emph{almost ample} if there is an integer $k\gg 0$ such that the divisor $kD$ defines \label{item:al-am}
    a morphism $\iota_{\sm kD}:X\rightarrow\cps^N$ which is biholomorphic in a neighborhood of $D$ onto its image.
\end{thmlist}
We will call $D$ \emph{good} if it is admissible and almost ample.
\end{defn}

In~\cite{TY2} the following is proved.  See also~\cite{BK1,BK2} and~\cite{TY1} for similar results.
\begin{thm}\label{thm:C-Y}
Let $X$ be a K\"{a}hler orbifold, and let $D$ be a good divisor with $\alpha[D]=-K_X, \alpha >1.$   Suppose that $D$
admits a K\"{a}hler-Einstein metric, then there exists a complete Ricci-flat K\"{a}hler metric $g$ on $Y=X\setminus D$
in every K\"{a}hler class in $H^2_c(Y,\R)$.

Furthermore, if $\rho$ denotes the distance function on $Y$ from a fixed point and $R_g$ denote the curvature
tensor of $g$,  then $\|\nabla ^k R_g\|_g =O(\rho^{-2-k})$.
\end{thm}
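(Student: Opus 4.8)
The plan is to realize the metric as the solution of a complex Monge--Amp\`ere equation on $Y$, adapting Yau's method to the complete non-compact setting. First I would use the numerical identity $\alpha[D]=-K_X$ to manufacture a nowhere-vanishing holomorphic $n$-form $\Omega$ on $Y=X\setminus D$. Writing $s$ for the defining section of $[D]$, which is nowhere zero on $Y$, the relation $-K_X=\alpha[D]$ trivializes $K_Y$ (via a root of $s$ when $\alpha$ is not integral, or orbifold line bundles in general), and the resulting $\Omega$ blows up in a controlled fashion along $D$. The Ricci-flat condition then reads $\omega^n=c\,\Omega\wedge\bar\Omega$, and I would seek $\omega=\omega_0+i\partial\bar\partial\phi$ within a prescribed class of $H^2_c(Y,\R)$.

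Next I would build the background metric $\omega_0$. By adjunction $K_D=(1-\alpha)\,N_D$ with $N_D=[D]|_D$, so since $\alpha>1$ the normal bundle is a positive multiple of $-K_D$ and $D$ is Fano; the hypothesized K\"ahler--Einstein metric $\omega_D$ therefore has positive Einstein constant and is adapted to $N_D$. Feeding $\omega_D$ and a convex radial potential in the fibre direction into the Calabi ansatz produces a complete Ricci-flat model on a punctured neighborhood of $D$ in $N_D$, asymptotically the Calabi--Yau cone over the Sasaki--Einstein link $S$ of $N_D$, which I would graft onto an interior K\"ahler metric representing the chosen class in $H^2_c(Y,\R)$. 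The decisive gain is that the error $f:=\log\!\big(c\,\Omega\wedge\bar\Omega/\omega_0^{\,n}\big)$, together with its covariant derivatives, decays in the distance $\rho$ to a fixed point, precisely because $\omega_0$ is Ricci-flat to leading order near $D$.

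I would then solve $(\omega_0+i\partial\bar\partial\phi)^n=e^f\omega_0^{\,n}$ by the continuity method along $(\omega_0+i\partial\bar\partial\phi_t)^n=e^{tf}\omega_0^{\,n}$, $t\in[0,1]$, taking openness from the implicit function theorem and closedness from a priori estimates, all in weighted H\"older spaces tuned to the decay of $f$. The $C^2$ bound is the Aubin--Yau inequality, which needs a uniform lower bound on the holomorphic bisectional curvature of $\omega_0$ supplied by the model; Evans--Krylov and Schauder estimates then give $C^{k,\beta}$ control. Completeness is automatic once $\phi$ is shown to decay, since $\omega$ is then uniformly equivalent to $\omega_0$ at infinity and remains in the same $H^2_c$ class, and bootstrapping the Schauder estimates against the $\rho^{-2}$ curvature decay of the Calabi model yields $\|\nabla^kR_g\|_g=O(\rho^{-2-k})$.

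The principal obstacle is the $C^0$ estimate on the non-compact $Y$, where compact Moser iteration is unavailable. One must work in weighted spaces, establish the Sobolev and Poincar\'e inequalities valid on the asymptotically conical end, and construct sub- and super-solution barriers reflecting the decay rate of $f$. Upgrading the conclusion from a merely bounded $\phi$ to a genuinely \emph{decaying} $\phi$---which is what pins the solution to the prescribed compactly supported class and drives the curvature asymptotics---is the heart of the argument.
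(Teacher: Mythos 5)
Your overall two-step scheme (build a background K\"ahler metric $\omega_0$ on $Y$ whose Ricci potential $f$ decays, then solve $(\omega_0+i\partial\ol{\partial}\phi)^n=e^f\omega_0^n$ with weighted estimates) is indeed the shape of the Tian--Yau argument from which Theorem~\ref{thm:C-Y} is quoted. But your construction of $\omega_0$ has a genuine gap, and it is exactly the gap this paper was written to address. You propose to build the Ricci-flat Calabi-ansatz model on the total space of the normal bundle $N_D$ and then ``graft'' it onto $Y$. To do that you must transport the model metric to a neighborhood of $D$ in $X$, i.e.\ you need an identification of a tubular neighborhood of $D\subset X$ with a neighborhood of the zero section of $N_D$ that is holomorphic, or at least holomorphic to high order along $D$. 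No such identification exists in general: by Grauert--Griffiths, the obstruction to lifting an isomorphism of the $k$-th infinitesimal neighborhoods of $D$ to the $(k+1)$-st lies in $H^1(D,\Theta_X\otimes\mathcal{O}(-kD)|_D)$, and these groups need not vanish. If you use only the smooth tubular neighborhood theorem, the pulled-back Calabi model is not even a $(1,1)$-form for the complex structure of $Y$, so the Monge--Amp\`ere equation cannot be posed with it and no decay of $f$ can be extracted. This is precisely why Theorem~\ref{thm:main} of the paper --- which does construct $\omega_0$ your way --- must assume the extra hypothesis (\ref{cond}); Theorem~\ref{thm:C-Y} assumes no such thing, so your argument does not prove it in the stated generality.

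The theorem as stated is proved in~\cite{TY2} (the paper only sketches the idea) by a construction that sidesteps the normal bundle entirely: choose a Hermitian metric on the line bundle $[D]$ over all of $X$ whose curvature $\omega_0$ restricts on $D$ to the K\"ahler--Einstein metric, let $\sigma$ be the canonical section vanishing on $D$, and take $\omega=\frac{n}{\alpha-1}i\partial\ol{\partial}\|\sigma\|^{-2(\alpha-1)/n}$ on $Y=X\setminus D$. This is automatically a K\"ahler form for the complex structure of $Y$ (no identification of neighborhoods is needed), it is asymptotically of Calabi-ansatz type near $D$, and the K\"ahler--Einstein condition on $D$ is what makes $f=\log(\Omega\wedge\ol{\Omega}/\omega^n)$ extend smoothly across $D$ and be constant on $D$, hence (after normalization) vanish along $D$ at the rate the weighted analysis requires. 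If you replace your grafting step by this global potential, the rest of your outline --- continuity method or exhaustion, $C^0$ barriers, the $C^2$ estimate, Evans--Krylov and Schauder bootstrapping against the $\rho^{-2}$ curvature decay of the model --- is the standard path to the stated conclusion, including $\|\nabla^k R_g\|_g=O(\rho^{-2-k})$.
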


We denote by $H^2_c(Y,\R)$ cohomology with compact support.
The metrics in the theorem have Euclidean volume growth.  It follows from the results of~\cite{BKN}
that if $\|R_g\|_g =O(\rho^{-j})$ for $j>2$, then $Y$ is asymptotically locally Euclidean (ALE).

Recall the idea behind theorem~\ref{thm:C-Y}.  Choose a hermitian metric on $[D]$ with curvature $\omega_0$, whose
restriction $\omega_D =\omega_0 |_D$ defines a K\"{a}hler-Einstein metric on $D$ with
$\ric(\omega_D)=(\alpha -1)\omega_D$.  Let $\sigma$ be a section of $[D]$ vanishing on $D$, and
let $t=\log\|\sigma\|^{-2}$.  Then define the K\"{a}hler metric on $Y=X\setminus D$
\begin{equation*}
\begin{split}
 \omega & = \frac{n}{\alpha-1}i\partial\ol{\partial}\|\sigma\|^{-\frac{2(\alpha-1)}{n}}\\
    & = \|\sigma\|^{\frac{-2(\alpha-1)}{n}}\omega_0 +\frac{(\alpha-1)}{n}\|\sigma\|^{-\frac{2(\alpha-1)}{n}}i\partial t\wedge\ol{\partial}t.\\
\end{split}
\end{equation*}
Then $\omega^n$ has a pole of order $2\alpha$ along $D$.  There exists an holomorphic n-form $\Omega$ on $X$
with a pole of order $\alpha$ along $D$.  The K\"{a}hler-Einstein condition implies that the function
$f=\log(\frac{\Omega\wedge\ol{\Omega}}{\omega^n})$ extends to a smooth function on $X$ constant on $D$, and can be made to
vanish reasonably rapidly along $D$.  Then the existence of the Ricci-flat metric on $Y=X\setminus D$ is
proved by solving a Monge-Amp\`{e}re equation similar to the compact case.

Of course, in general there is no guarantee that $D$ admits a K\"{a}hler-Einstein metric, as there are well known
obstructions to the existence of positive scalar curvature K\"{a}hler-Einstein metrics.
See~\cite{Mat,Fut1,Fut2} for obstructions involving the automorphism group, and~\cite{Ti} for further obstructions.

This article is concerned with extending theorem~\ref{thm:C-Y} to examples where $D$ does not admit a
K\"{a}hler-Einstein metric.  The approach is to consider the link in the conormal bundle $S\subset\mathbf{L}:=N^*_D =[-D]|_D$.
There is a standard Sasaki structure on $S$ that is Sasaki-Einstein if and only if $D$ is K\"{a}hler-Einstein.
But if $D$ does not admit a K\"{a}hler-Einstein metric, then $S$ may still admit a Sasaki-Einstein structure.  This structure
can arise by varying the Reeb vector field.  This is made precise in the notion of the Sasaki cone, in analogy with the K\"{a}hler
cone of a K\"{a}hler manifold.

In the theorem we will need to assume that
\begin{equation}\label{cond}
H^1(D,\Theta_X\otimes\mathcal{O}(-kD)|_D) =0,\text{ for all } k\geq 2.
\end{equation}
Let $\mathbf{L}=N^*_D =[-D]|_D$ be the conormal bundle of $D\subset X$.  In the following theorem we can weaken
condition~(\ref{item:al-am}) in Definition~\ref{defn:good-div} to just the normal bundle $N_D =[D]|_D$ being positive.

\begin{thm}\label{thm:main}
Suppose $X$ is a K\"{a}hler orbifold and $D\subset X$ is a good divisor with $\alpha[D]=-K_X$, $\alpha>1$.
Suppose (\ref{cond}) is satisfied, and suppose the link $S\subset\mathbf{L}$ admits a Sasaki-Einstein
structure in the Sasaki-cone of the standard Sasaki structure on $S\subset\mathbf{L}$.
Then $Y=X\setminus D$
admits a complete Ricci-flat K\"{a}hler metric $g$, in every K\"{a}hler class in $H^2_c(Y,\R)$, which is asymptotic to a
Calabi ansatz metric in the following sense.
For fixed $k\in\N$ and $\delta >0$ there is a neighborhood $U\subset Y$ of infinity and a diffeomorphism
$\phi:U\rightarrow V\subset\mathbf{L}$ to a neighborhood of infinity.  There is a Ricci-flat Calabi ansatz metric
$g_{\sm 0}$ on $V$ so that if we set $\ol{g}=\phi^*g_{\sm 0}$, then on $U$ we have
\begin{equation}\label{eq:asymp}
\nabla^j \left(g -\ol{g}\right) =O\left( \rho^{-2n+\delta-j}\right)\quad\text{for } j\leq k,
\end{equation}
where $\nabla$ is the covariant derivative of $\ol{g}$.
\end{thm}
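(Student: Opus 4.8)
The plan is to reduce the theorem to the Tian--Yau/Bando--Kobayashi scheme behind Theorem~\ref{thm:C-Y}, replacing the K\"ahler--Einstein model at infinity by a Ricci-flat \emph{Calabi ansatz model built from the Sasaki--Einstein structure on $S$}, and then using condition~(\ref{cond}) to transport that model holomorphically onto a neighbourhood of infinity in $Y$ before running a Monge--Amp\`ere argument.

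I would first construct the model metric $g_{\sm 0}$. A Sasaki structure on $S$ is equivalent to a K\"ahler cone structure on $C(S)=\mathbf{L}\setminus D$, the complement of the zero section, with Reeb field $\xi=J(r\partial_r)$. Deforming $\xi$ within the Sasaki cone of the standard structure alters the transverse K\"ahler metric and the radial homothety but \emph{leaves the complex structure of $C(S)$ fixed}, so a Sasaki--Einstein representative in this cone is exactly a Ricci-flat K\"ahler cone metric on the fixed complex manifold $\mathbf{L}\setminus D$; writing it in the Calabi ansatz over the (Seifert) base $D$ and restricting to a neighbourhood $V$ of infinity gives $g_{\sm 0}$. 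Here the hypothesis $\alpha[D]=-K_X$, $\alpha>1$, is essential: by adjunction it is equivalent to $K_D=\mathbf{L}^{\otimes(\alpha-1)}$, which makes $C(S)$ Gorenstein Calabi--Yau and $D$ of Fano type, the numerical condition under which such a Ricci-flat cone metric can exist.

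Next I would match complex structures using~(\ref{cond}). The embedding $D\subset X$ gives a $C^\infty$ identification of a neighbourhood of $D$ with a neighbourhood of the zero section in $N_D=[D]|_D$, but the two complex structures differ by higher-order terms, the obstruction to extending a biholomorphism from the $(k-1)$-st to the $k$-th infinitesimal neighbourhood lying in $H^1\bigl(D,\Theta_X\otimes\mathcal{O}(-kD)|_D\bigr)=H^1(D,\Theta_X|_D\otimes\mathbf{L}^{\otimes k})$. Condition~(\ref{cond}) kills these for all $k\geq 2$, so the neighbourhood of $D$ in $X$ is formally linearisable; since~(\ref{eq:asymp}) is only asserted for a \emph{fixed} $k$ and $\delta>0$, it suffices to linearise to a sufficiently high finite order, which avoids any convergence issue. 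Composing with the biholomorphism $N_D\setminus D\cong\mathbf{L}\setminus D=C(S)$ (fibrewise inversion) yields a diffeomorphism $\phi:U\to V$ identifying infinity in $Y$ with infinity in $\mathbf{L}$ and matching the complex structures to high order, and I set $\ol g=\phi^*g_{\sm 0}$. This step is exactly where the present construction differs from Theorem~\ref{thm:C-Y}: in the K\"ahler--Einstein case the model can be written intrinsically on $Y$ from $\|\sigma\|$ and the fibre $S^1$-symmetry, whereas the deformed Sasaki--Einstein model lives on the external cone $C(S)$ and must be carried onto $Y$ by a genuine biholomorphism.

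Finally I would run the Monge--Amp\`ere argument. Patching $g_{\sm 0}$ near infinity to an arbitrary interior metric produces a global K\"ahler form $\omega$ on $Y$ in the prescribed class of $H^2_c(Y,\R)$; with $\Omega$ the holomorphic $n$-form on $X$ having a pole of order $\alpha$ along $D$, the Ricci-flatness of the model forces $f=\log\bigl(\Omega\wedge\ol\Omega/\omega^n\bigr)$ to decay at infinity, and one solves $(\omega+i\partial\ol\partial u)^n=e^f\omega^n$ on the complete manifold $Y$ in weighted H\"older spaces adapted to the conical geometry, as in~\cite{TY2,BK1,BK2}, reading off the decay of $u$ and its derivatives to obtain~(\ref{eq:asymp}). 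I expect the main obstacle to be the matching step above, together with the accompanying weighted estimates: one must control the finite-order linearisation quantitatively and track it through the Monge--Amp\`ere solution so that the error decays strictly faster than the model (accounting for the $\delta$ loss in the exponent $-2n+\delta-j$). By contrast, once the Gorenstein cone and its asymptotics are fixed, the solvability of the Monge--Amp\`ere equation itself is a fairly routine adaptation of the existing non-compact theory.
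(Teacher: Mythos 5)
Your strategy is, in outline, exactly the paper's: build the Ricci-flat model at infinity from the Sasaki--Einstein structure via the (Futaki) Calabi ansatz on $\mathbf{L}^\times$ (Proposition~\ref{prop:approx}), use condition~(\ref{cond}) and the Grauert--Griffiths obstruction theory to get a diffeomorphism of a tubular neighborhood of $D\subset X$ with a neighborhood of infinity in $\mathbf{L}$ whose jet along $D$ is holomorphic to a large \emph{finite} order $\nu$, then splice, solve the Monge--Amp\`ere equation (Proposition~\ref{prop:TY}), and upgrade the decay in weighted H\"older spaces (Propositions~\ref{prop:asymp} and~\ref{prop:decay}). Two imprecisions in your first step are harmless but worth flagging: when the Sasaki--Einstein Reeb field differs from the standard one it is typically irregular, so the ansatz is built over the transverse foliation of the new Reeb field rather than ``over the base $D$''; and the paper's model is the family (\ref{eq:prof3})--(\ref{eq:prof4}), of which the rescaled cone metric is only the $a=0$ member.

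The genuine gap is your patching claim: you assert that gluing $g_{\sm 0}$ near infinity to an interior metric ``produces a global K\"ahler form $\omega$ on $Y$ in the prescribed class of $H^2_c(Y,\R)$'' as if this were automatic. For an arbitrary K\"ahler class in $H^2_c(Y,\R)$ one must write the given form on the end as $i\partial\ol{\partial}f$ and cut $f$ off; compact support of the class only gives $d$-exactness outside a compact set, and passing from $d$-exactness to $\partial\ol{\partial}$-exactness on a noncompact end is precisely where the paper has to work. It shows $Y$ is $1$-convex, passes to the Remmert reduction $W$, proves via Burns' criterion and the holomorphic $n$-form that the singularities of $W$ are rational (hence Cohen--Macaulay), and deduces $H^1(Y\setminus\ol{B},\mathcal{O})=0$ (Proposition~\ref{prop:vanish}), which yields the $\partial\ol{\partial}$-lemma on the end (Lemma~\ref{lem:ddbar}). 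Without this cohomological input your argument produces a Ricci-flat metric in \emph{some} compactly supported class, not in every prescribed one. A second, more technical omission: the weighted Schauder bootstrap needs uniform holomorphic charts in which $g_0$ has bounded geometry of high order (Proposition~\ref{prop:bound-geo}); since the end is neither ALE nor exactly conical, these charts must be constructed by correcting almost-holomorphic coordinates with H\"ormander's $L^2$-estimate, and this is where the finite-order matching $\nu$ and the vanishing of $f=\log\left(\sigma\wedge\ol{\sigma}/\omega_0^n\right)$ to order $\nu-2$ along $D$ are actually consumed quantitatively.
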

\begin{remark}
One can eliminate the ``$\delta$'' in (\ref{eq:asymp}) by applying the proof of a non-compact version
of the Calabi Conjecture in~\cite[Theorem 3.10]{vC4} where analysis of the Laplacian is used to get the sharp convergence.

One can probably eliminate the restriction to K\"{a}hler classes in $H^2_c(Y,\R)$.  R. Goto was able to do this in the
special case of a crepant resolution of a Sasaki cone.  One motivation for doing this is that many of the examples $Y$,
such as a smoothing of a singularity, will be Stein.  And in this case $H^2_c(Y,\R)=0$, since $H^k(Y,\R)=0$ for $k>n$.
So Theorem~\ref{thm:main} just produces a Ricci-flat metric in the trivial K\"{a}hler class.
\end{remark}

Note that if $Y$ is Stein, e.g. $Y$ is an affine variety, then there is a cohomologically trivial K\"{a}hler metric.  So there is no difficulty finding a K\"{a}hler class in $H^2_c(Y,\R)$.

Of course it is desirable to remove the condition (\ref{cond}).  But the author does not know how to construct
the approximating metric in the proof without it.  This work was inspired by interesting recent results on
irregular Sasaki-Einstein manifolds such as the solution of the problem of the existence of Sasaki-Einstein structures
on toric Sasaki manifolds by A. Futaki, H. Ono, and G. Wang~\cite{FOW}.

For a source of examples we consider deformations of isolated toric Gorenstein singularities.  It is known~\cite{FOW} that such
a singularity itself has a nice Ricci-flat K\"{a}hler metric which is a metric cone $C(S)$ over a Sasaki-Einstein manifold $S$.
The versal deformation space of an isolated toric Gorenstein singularity was constructed by K. Altmann~\cite{Alt}.
These singularities are rigid in dimensions $n\geq 4$, but there are many interesting examples in dimension 3.

\begin{cor}\label{cor:main}
Let $Y$ be a small deformation of an isolated toric Gorenstein singularity.  If $Y$ is smooth, then it admits a complete Ricci-flat
K\"{a}hler metric as in Theorem~\ref{thm:main} which is invariant under a $T^{n-1}$-action.
\end{cor}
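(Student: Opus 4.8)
The plan is to realize the smoothing $Y$ as the complement $X\setminus D$ of a good divisor in a compact Kähler manifold and to apply Theorem~\ref{thm:main}. Write the isolated toric Gorenstein singularity as an affine cone $C(S)=\cone(D,L)$: the Gorenstein condition places the primitive generators of the defining cone $\sigma\subset N_\R$ on an affine hyperplane at lattice height one, and the cross-section is a lattice polytope whose associated projective toric variety $D$ carries the ample polarization $L=N_D$, with $C(S)$ the affine cone over $(D,L)$. Its link $S$ is the unit circle bundle of the conormal bundle $\mathbf{L}=N_D^{*}=L^{-1}$, a toric Sasaki manifold. Since the singularity is isolated, the punctured cone $C(S)\setminus\{0\}$ is smooth, hence $D$ and $S$ are smooth. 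Compactify the cone by forming the $\mathbb{P}^1$-bundle $\mathbb{P}(\mathbf{L}\oplus\mathcal{O})\to D$ and contracting the zero-section (the apex); this yields a projective variety $\bar X$ containing the section $D_\infty\cong D$ at infinity with $N_{D_\infty}=L$, and $\bar X\setminus D_\infty$ is the affine cone. By Altmann's construction~\cite{Alt} the deformations of $C(S)$ arise from Minkowski decompositions of the polytope and are supported near the apex, so each smooth fiber $Y$ is asymptotically conical with tangent cone $C(S)$ and is compactified by the same divisor at infinity: there is a compact Kähler manifold $X$, agreeing with $\bar X$ near $D$, with $X\setminus D=Y$, $D\cong D_\infty$ and $N_D=L$.

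With this set-up the hypotheses of Theorem~\ref{thm:main} become toric statements. Because $N_D=L$ is positive, $D$ is almost ample in the sense of the weakening of Definition~\ref{defn:good-div}(\ref{item:al-am}) noted before Theorem~\ref{thm:main}; admissibility holds since $X$ is smooth away from $D$ (as $Y$ is smooth) and equals $\operatorname{Tot}(N_D)$ over the smooth variety $D$ near $D$, so $\Sing X\subset D$. Adjunction for the zero-section, $-K_X|_D=-K_D+N_D$, together with the Gorenstein--Fano relation $-K_D=(\alpha-1)N_D$ forces $\alpha[D]=-K_X$ with $\alpha>1$, the strict inequality coming from the positivity of $-K_D$. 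For condition~(\ref{cond}), twist the normal sequence $0\to\Theta_D\to\Theta_X|_D\to N_D\to 0$ by $\mathcal{O}(-kD)|_D=L^{-k}$; the resulting long exact sequence reduces the vanishing of $H^1(D,\Theta_X\otimes L^{-k})$ to $H^1(D,L^{1-k})=0$ and $H^1(D,\Theta_D\otimes L^{-k})=0$ for $k\ge 2$. The first is Kodaira--Serre vanishing for the anti-ample $L^{1-k}$ on the Fano $D$; the second is verified combinatorially, using the toric Euler sequence to reduce the twisted tangent-sheaf cohomology to a lattice computation on the fan of $D$. Finally, since $D$ is toric its link $S$ is a toric Sasaki manifold, so by the theorem of Futaki--Ono--Wang~\cite{FOW} it admits a Sasaki--Einstein structure whose Reeb field lies in the Sasaki cone of the standard structure.

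Theorem~\ref{thm:main} now provides a complete Ricci-flat Kähler metric on $Y=X\setminus D$ in each Kähler class of $H^2_c(Y,\R)$, asymptotic to the Calabi ansatz metric on $\mathbf{L}$, i.e. to the Ricci-flat cone $C(S)$. For the $T^{n-1}$-invariance, observe that the real torus $T^n$ acting on $C(S)$ preserves the Gorenstein holomorphic volume form only along an $(n-1)$-dimensional subtorus, and Altmann's deformations are equivariant for this $T^{n-1}$; hence $Y$, the compactification $X$, the divisor $D$, and the approximating Calabi ansatz metric are all $T^{n-1}$-invariant. The Ricci-flat metric is obtained by solving a Monge--Amp\`ere equation with $T^{n-1}$-invariant data, and its solution in a fixed Kähler class is unique, so the resulting metric is $T^{n-1}$-invariant.

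The principal obstacle is the content of the first paragraph: marrying Altmann's deformation theory to the compactification, so that the smooth fiber $Y$ genuinely appears as $X\setminus D$ with the \emph{fixed} divisor $D$ at infinity and unchanged normal bundle $N_D=L$ across the family. Concretely, one must show the smoothing is asymptotically conical with tangent cone $C(S)$ and that the compactifying manifold $X$ is Kähler; once this is in place, goodness of $D$, the value of $\alpha$, the vanishing~(\ref{cond}), and the Sasaki--Einstein input are either formal or quotable.
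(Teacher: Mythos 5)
Your overall strategy---compactify the smoothing, verify the hypotheses of Theorem~\ref{thm:main}, and quote Futaki--Ono--Wang for the Sasaki--Einstein structure on the toric link---is the same as the paper's, but the step on which everything rests is exactly the one you do not prove, and you say so yourself in your last paragraph. ``Altmann's deformations are supported near the apex'' is not an argument: a deformation of an affine variety changes it everywhere, and what has to be shown is that it does \emph{not} change the structure at infinity. The paper proves this by working torically with the whole family: it compactifies the total space $Y_{\tilde{\sigma}}$ of Altmann's family by adjoining the single ray through $\tau=-n_0-e_0-\cdots-e_p$ to the tautological cone $\tilde{\sigma}$, takes $\ol{X}_{\ol{c}}$ to be the closure of the fiber $X_{\ol{c}}$ in this compactification, checks that the divisor at infinity satisfies $D:=D_\tau\cap\ol{X}_{\ol{c}}=D_\tau\cap\ol{X}_\sigma$ (so the fiber and the cone really are compactified by the \emph{same} divisor), proves $\ol{X}_{\ol{c}}$ is an orbifold, and obtains $\mathbf{K}_{\ol{X}_{\ol{c}}}=[-2D]$ by applying adjunction $p$ times to the holomorphic $(n+p)$-form $\Omega'$, which has a pole of order $p+2$ along $D_\tau$. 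None of this is ``formal or quotable''; it is the content of Section 5 (Propositions~\ref{prop:reg}--\ref{prop:smooth} and the following proposition). Moreover your phrase ``agreeing with $\bar X$ near $D$'' overclaims: the construction gives the same $D$ with the same normal bundle, not a biholomorphism of neighborhoods, and it is precisely condition~(\ref{cond}) that upgrades this first-order agreement to the high-order holomorphic jet needed in the proof of Theorem~\ref{thm:main}.

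There is also a genuine error earlier in your argument: from ``the singularity is isolated'' you conclude that $D$, and hence the compactification $X$, is smooth. Only the link $S$ is smooth; the quotient $D$ (the toric variety of the face fan of $P-n_0$, not of the normal fan of $P$) is in general only an \emph{orbifold}---for the $A_k$ singularity it is $\cps^1$ with two $\Z_{k+1}$ orbifold points---and correspondingly $\ol{X}_{\ol{c}}$ is an orbifold. This is why Theorem~\ref{thm:main} is formulated for orbifolds, why Proposition~\ref{prop:cond} cannot be applied directly, and why the paper reproves condition~(\ref{cond}) for orbifold sheaves using the sequence $0\rightarrow\Omega^1_D\rightarrow\mathcal{O}_D\oplus\mathcal{O}_D\rightarrow\bigoplus_i\mathcal{O}_{C_i}\rightarrow 0$ together with Kodaira vanishing and Serre duality; your ``lattice computation'' must be run in this orbifold category, and by Altmann's rigidity theorem (isolated toric Gorenstein singularities are rigid for $n\geq 4$, a reduction you never invoke) it only needs to be done for $n=3$. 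Finally, the compactification requires an interior lattice point $n_0\in\inter(P)\cap N$; the unique $n\leq 3$ case where this fails is the quadric cone (conifold), which the paper handles separately via Stenzel's metric, and the $n=2$ case of the $A_k$ singularities is also treated separately by an explicit weighted projective compactification and Theorem~\ref{thm:C-Y}. Your argument covers neither of these cases.
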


Note the deformed space $Y$ is no longer toric but admits a $(\C^*)^{n-1}$-action.  Corollary~\ref{cor:main} parallels that of
the author in~\cite{vC2} and~\cite{vC3} where it is proved that certain crepant resolutions of isolated toric Gorenstein singularities
admit Ricci-flat K\"{a}hler metrics which are asymptotic to the cone metric on $C(S)$.
Because of the $T^{n-1}$-action, the moment map construction of M. Gross~\cite{Gro} produces special Lagrangian fibrations
on the examples produced in Corollary~\ref{cor:main}.  These fibrations were studied in~\cite{Gro}; the contribution
here is in proving the existence of the Calabi-Yau metric.

One motivation for studying this problem is the conjecture, due to S.-T. Yau, that if
$Y$ is a complete Ricci-flat K\"{a}hler manifold with finite topology, then $Y=X\setminus D$ where $X$ is a compact
K\"{a}hler orbifold and $D$ supports $-K_X$.  Another motivation is the construction of complete Ricci-flat
K\"{a}hler metrics which are asymptotic to the K\"{a}hler cone $C(S)$ of a Sasaki-Einstein manifold
$S$.  Some explicit examples are given in~\cite{MS1} of Ricci-flat K\"{a}hler metrics on resolutions of Ricci-flat
K\"{a}hler cones $C(S)$.  The author has considered the general existence problem in~\cite{vC2}, and given further families
of examples in~\cite{vC3}.  This case is somewhat easier than that considered in this article.  In particular, the technical assumption
(\ref{cond}) is not needed.  These asymptotically conical Calabi-Yau metrics are of interest in the AdS/CFT correspondence~\cite{MS2,MS3}.
Here one considers a stack of D-branes at the singularity of the Calabi-Yau cone, and it is useful to work with Calabi-Yau resolutions
of this singularity.

In fact, one of the motivations of this work is that the problem of finding a Ricci-flat K\"{a}hler metric on an affine
variety is complimentary to the problem of finding such a metric on a resolution of a singularity.
There are two ways of smoothing a Ricci-flat K\"{a}hler cone $X=C(S)$.  One can possibly find a crepant resolution
$\pi:\check{Y}\rightarrow X$.  Or one may be able to analytically deform $X$ to a smooth variety $\hat{Y}$.
When both of these exist, this is an example of a \emph{geometric transition} which are of interest to physicists and mathematicians
studying Calabi-Yau manifolds~\cite{Ros}.  The familiar example is that of the quadric cone $X=\{(U,V,Y,Z)\in\C^4 :UV-XY=0\}$ which
is known to admit a small resolution $\check{Y}$ and a smoothing $\hat{Y}$ where all three of these manifolds are known to admit Ricci-flat
K\"{a}hler metrics~\cite{CanOs}.

In Section~\ref{sec:examp} an example is given where $X$ is the cone over $\cps_{(2)}^2$, the two-points blow-up.  The total space
of the canonical bundle of $\cps_{(2)}^2$ gives a resolution $\check{Y}$ of $X$, and $X$ can also be deformed to a smooth affine variety
$\hat{Y} =\cps^3_{(1)} \setminus D$ where $D\cong \cps_{(2)}^2$.  Theorem~\ref{thm:main} shows that both $\check{Y}$ and $\hat{Y}$
have Ricci-flat K\"{a}hler metrics.  This example is simplest case in Corollary~\ref{cor:main} which gives a new Ricci-flat manifold.

We then consider an infinite class of three dimensional toric K\"{a}hler cones possessing a symmetry which were previously considered by
the author in a different context~\cite{vC, vC1}.  They admit large families of smooth deformations and therefore provide an infinite
family of Ricci-flat manifolds which are smoothings of toric Gorenstein singularities.  One can construct many more
examples of three dimensional toric cones with non-trivial deformations by taking Minkowski sums of simple polygons.
Many of these, via Corollary~\ref{cor:main}, should give new examples of affine varieties with Ricci-flat K\"{a}hler metrics.

\subsection*{Notation}
We will denote line bundles in boldface, $\mathbf{L}, \mathbf{K}$, etc. While the corresponding divisor classes
are denoted, $L, K$, etc.  If $D$ is a divisor then $[D]$ denotes, depending on the context, either
the corresponding line bundle or the poincar\'{e} dual of the homology class of $D$.  The same notation will
be used for the analogous notions of V-bundles and Baily divisors on orbifolds (cf.~\cite{Ba}).  The total space
of the line bundle $\mathbf{L}$ minus the zero section will be denoted by $\mathbf{L}^\times$.

\subsection*{Acknowledgments}
This work was inspired by a conversation with James Sparks at the Sugadaira conference in 2007.
It is a pleasure to thank the organizers for the opportunity to participate in the conference.

\section{Sasaki manifolds}

\subsection{Introduction}\label{subsect:intro}
We review here some results from Sasakian geometry.  For more details see~\cite{BG},~\cite{FOW}, or the
comprehensive monograph~\cite{BG}.
\begin{defn}
A Riemannian manifold $(S,g)$ is Sasakian if the metric cone $(C(S),\ol{g})$,
$C(S)=S\times\R_+$ and $\ol{g}=dr^2 +r^2 g$, is a K\"{a}hler manifold.
\end{defn}

Thus $\dim_{\R} S=2m+1$, where $n=\dim_{\C} C(S)=m+1$.
Set $\tilde{\xi}=J(r\frac{\partial}{\partial r})$, then $\tilde{\xi}-iJ\tilde{\xi}$ is a holomorphic vector field
on $C(S)$.  The restriction $\xi$ of $\tilde{\xi}$ to $S=\{r=1\}\subset C(S)$
is the \emph{Reeb vector field} of $S$, which is a Killing vector field.
If the orbits of $\xi$ close, then it defines a locally free $U(1)$-action on $S$ and the Sasaki
structure is said to be \emph{quasi-regular}.  If the $U(1)$-action is free, then the Sasaki
structure is \emph{regular}.  If the orbits of $\xi$ do not close, then the Sasaki structure is \emph{irregular}.
In this case the closure of the one parameter subgroup of the isometry group generated by $\xi$ is a
torus $T^k$ and $T^k \subseteq\Aut(S)$, the automorphism group of the Sasaki structure.
We say that the Sasaki structure has \emph{rank} k, $\rank(S)=k$.

Let $\eta$ be the dual 1-form to $\xi$ with respect to $g$.  Then
\begin{equation}\label{eq:cont}
\eta = (2d^c \log r)|_{r=1},
\end{equation}
where $d^c=\frac{1}{2}i(\ol{\partial}-\partial)$.  Let $D=\ker\eta$.  Then $d\eta$ in non-degenerate on $D$
and $\eta$ is a contact form on $S$.  Furthermore, we have
\begin{equation}
d\eta(X,Y) =2g(\Phi X,Y), \quad\text{for }X,Y\in D_x,\ x\in S,
\end{equation}
where $\Phi$ is given by the restriction of the complex structure $J$ on $C(S)$ to $D_x$ and $\Phi\xi=0$.  Thus
$(D,J)$ is a strictly pseudo-convex CR structure on $S$.
We will denote the Sasaki structure on $S$ by $(g,\xi,\eta,\Phi)$.
It follows from (\ref{eq:cont}) that the K\"{a}hler form of $(C(S),\ol{g})$ is
\begin{equation}
 \omega=\frac{1}{2}d(r^2 \eta)=\frac{1}{2}dd^c r^2.
\end{equation}
Thus $\frac{1}{2}r^2$ is a K\"{a}hler potential for $\omega$.

There is a 1-dimensional foliation $\mathscr{F}_\xi$ generated by the Reed vector field $\xi$.  Since the leaf
space is identical with that generated by $\tilde{\xi}-iJ\tilde{\xi}$ on $C(S)$, $\mathscr{F}_\xi$ has
a natural transverse holomorphic structure.  And $\omega^T =\frac{1}{2}d\eta$ defines a K\"{a}hler form on the
leaf space.

We will consider deformations of the transverse K\"{a}hler structure.  Let $\phi\in C^\infty_B(S)$ be a smooth basic
function, meaning $\xi\contr d\phi=0$.  Then set
\begin{equation}\label{eq:trans}
\tilde{\eta} =\eta +2d^c_B \phi.
\end{equation}
Then
\[ d\tilde{\eta} =d\eta +2d_B d^c_B \phi =d\eta +2i\partial_B \ol{\partial}_B \phi. \]
For sufficiently small $\phi$, $\tilde{\eta}$ is a non-degenerate contact form in that $\tilde{\eta}\wedge d\tilde{\eta}^m$
is nowhere zero.  Then we have a new Sasaki structure on $S$ with the same Reeb vector field $\xi$, transverse
holomorphic structure on $\mathscr{F}_\xi$, and holomorphic structure on $C(S)$.  This Sasaki structure has
transverse K\"{a}hler form $\tilde{\omega}^T=\omega^T +i\partial_B \ol{\partial}_B \phi$.  One can show~\cite{FOW}
that if
\[\tilde{r} =r\exp{\phi},\]
then $\frac{1}{2}\tilde{r}^2$ is the K\"{a}hler potential of the new K\"{a}hler structure on $C(S)$.

\begin{prop}\label{prop:ricci}
Let $(S,g)$ be a $2m+1$-dimensional Sasaki manifold.  Then the following are equivalent.
\begin{thmlist}
 \item $(S,g)$ is Sasaki-Einstein with the Einstein constant being necessarily $2m$.

 \item $(C(S),\ol{g})$ is a Ricci-flat K\"{a}hler.

 \item The K\"{a}hler structure on the leaf space of $\mathscr{F}_\xi$ is K\"{a}hler-Einstein with Einstein constant $2m+2$.
\end{thmlist}
\end{prop}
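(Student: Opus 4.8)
The plan is to derive two fundamental curvature identities and read off all three equivalences from them: one relating the Ricci tensor $\overline{\Ric}$ of the cone $(C(S),\ol{g})$ to the Ricci tensor $\Ric_g$ of $(S,g)$, and one relating $\Ric_g$ to the transverse Ricci tensor $\Ric^T$ of the foliation $\mathscr{F}_\xi$. Since $(C(S),\ol{g})$ is K\"{a}hler by definition, ``Ricci-flat K\"{a}hler'' means precisely $\overline{\Ric}=0$, so there is nothing complex-analytic to verify beyond the underlying Riemannian computation, and the equivalence will be closed by elementary linear algebra on the splitting $TS=D\oplus\R\xi$, where $D=\ker\eta$.

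For the equivalence of (i) and (ii), I would compute the Ricci tensor of the cone metric $\ol{g}=dr^2+r^2 g$ on $C(S)=S\times\R_+$ directly as a warped product. The standard cone computation gives $\overline{\Ric}(\partial_r,\cdot)=0$ in the radial directions, while for vector fields $X,Y$ tangent to the slice $S=\{r=1\}$ one finds
\[ \overline{\Ric}(X,Y)=\Ric_g(X,Y)-2m\,g(X,Y), \]
since $\dim_\R S=2m+1$ and hence $\dim_\R S-1=2m$. Therefore $\overline{\Ric}=0$ if and only if $\Ric_g=2m\,g$, i.e. $(S,g)$ is Einstein with Einstein constant $2m$. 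This also explains why the Einstein constant in (i) is forced to be exactly $2m$.

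For the equivalence of (i) and (iii), I would use the Sasakian structure equations, in particular $\nabla_X\xi=-\Phi X$, $\Phi^2=-\operatorname{id}+\eta\otimes\xi$, and the fact that the Reeb orbits are geodesics. Viewing the projection onto the (local) leaf space of $\mathscr{F}_\xi$ as a Riemannian submersion with totally geodesic one-dimensional fibers, the O'Neill formulas — with the integrability $A$-tensor governed by $d\eta$ — yield
\[ \Ric_g(\xi,\xi)=2m,\qquad \Ric_g(X,\xi)=0,\qquad \Ric_g(X,Y)=\Ric^T(X,Y)-2\,g(X,Y) \]
for all $X,Y\in D$, where $\Ric^T$ is the Ricci curvature of the transverse K\"{a}hler metric $\omega^T=\frac12 d\eta$ restricted to $D$. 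From the first two identities the $\xi$-component of $\Ric_g=2m\,g$ is automatic, while restricting to $D$ shows that $\Ric_g=2m\,g$ holds on $D$ exactly when $\Ric^T=(2m+2)\,g|_D$. Thus $(S,g)$ is Sasaki-Einstein with constant $2m$ if and only if the transverse K\"{a}hler structure is K\"{a}hler-Einstein with constant $2m+2$; since the transverse computation is pointwise, the argument is insensitive to whether the Sasaki structure is regular, quasi-regular, or irregular.

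The main obstacle is the correct derivation of the transverse identity $\Ric_g(X,Y)=\Ric^T(X,Y)-2\,g(X,Y)$. This requires careful bookkeeping of the O'Neill $A$-tensor contributions arising from the non-integrability of $D$ (equivalently, from $d\eta\neq 0$), combined with the curvature terms produced by $\nabla_X\xi=-\Phi X$. Once this identity and the cone formula above are established, all three equivalences follow immediately from the splitting $TS=D\oplus\R\xi$.
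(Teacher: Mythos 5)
Your proposal is correct and takes essentially the same route as the paper: the paper's entire proof consists of the remark that everything follows from elementary computations together with the single identity $\ric_g(\tilde{X},\tilde{Y})=(\Ric^T-2g^T)(X,Y)$ for lifts of vectors in the local leaf space, which is exactly the third identity you derive via the O'Neill formulas. Your warped-product computation for the cone and the submersion bookkeeping simply supply the details that the paper leaves implicit.
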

This follows from elementary computations.  In particular, the equivalence of (i) and (iii) follows from
\begin{equation}\label{eq:ricci}
 \ric_g(\tilde{X},\tilde{Y})=(\Ric^T -2g^T)(X,Y),
\end{equation}
where $\tilde{X},\tilde{Y}\in D$ are lifts of $X,Y$ in the local leaf space.

We will make use of a slight generalization of the Sasaki-Einstein condition.
\begin{defn}
 A Sasaki manifold $(S,g)$ is $\eta$-Einstein if there are constants $\lambda$ and $\nu$ with
 \[ \Ric =\lambda g +\nu\eta\otimes\eta.\]
\end{defn}

We have $\lambda +\nu=2m$ as $\Ric(\xi,\xi)=2m$.  In fact, this condition is equivalent to the transverse K\"{a}hler-Einstein
condition $\Ric^T =\kappa\omega^T$, since this implies by the the same argument that proves Proposition~\ref{prop:ricci}
that
\begin{equation}
 \Ric =(\kappa -2)g +(2m+2-\kappa)\eta\otimes\eta,
\end{equation}
and conversely.

Given a Sasaki structure we can perform a $D$-homothetic transformation to get a new Sasaki structure.  For $a>0$ set
\begin{gather}
 \eta'=a\eta,\quad \xi'=\frac{1}{a}\xi,\\
 g'=ag^T +a^2\eta\otimes\eta =ag+(a^2-a)\eta\otimes\eta.\\
\end{gather}
Then $(g',\xi',\eta',\Phi)$ is a Sasaki structure with the same holomorphic structure on $C(S)$, and with
$r'=r^a$.

Suppose that $g$ is $\eta$-Einstein with $\Ric_g =\lambda g +\nu\eta\otimes\eta$.   A simple computation involving
(\ref{eq:ricci}), $\Ric'^T =\Ric^T$ and $\Ric_{g'}(\xi',\xi')=2m$ shows that the $D$-homothetic transformation gives
an $\eta$-Einstein Sasaki structure with
\begin{equation}
 \Ric_{g'} =\lambda'g'+(2m-\lambda')\eta\otimes\eta,  \text{ with }\lambda'=\frac{\lambda+2-2a}{a}.
\end{equation}
If $g$ is $\eta$-Einstein with $\lambda>-2$, then a $D$-homothetic transformation with $a=\frac{\lambda+2}{2m+2}$ gives a
Sasaki-Einstein metric $g'$.  Thus any Sasaki structure which is transversely K\"{a}hler-Einstein
$\Ric^T =\kappa\omega^T$ with $\kappa>0$ has a $D$-homothetic transformation to a Sasaki-Einstein structure.

\begin{prop}\label{prop:CY-cond}
The following necessary conditions for $S$ to admit a deformation of the transverse K\"{a}hler structure to a Sasaki-Einstein
metric are equivalent.
\begin{thmlist}
 \item $c_1^B =a[d\eta]$ for some positive constant $a$.

 \item $c_1^B >0$, i.e. represented by a positive $(1,1)$-form, and $c_1(D)=0$.

 \item For some positive integer $\ell>0$, the $\ell$-th power of the canonical line bundle \label{prop:CY-cond-iii}
 $\mathbf{K}^{\ell}_{C(S) }$ admits a nowhere vanishing section $\Omega$ with
 $\mathcal{L}_\xi \Omega =i(m+1)\Omega$.
\end{thmlist}
\end{prop}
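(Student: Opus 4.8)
The plan is to prove the equivalences by passing back and forth between the basic cohomology $H^2_B(\mathscr{F}_\xi)$ of the Reeb foliation and the ordinary de Rham cohomology $H^2(S,\R)$, the bridge being the forgetful map $\iota\colon H^2_B(\mathscr{F}_\xi)\to H^2(S,\R)$. Two computations drive everything. First, the normal bundle $\nu(\mathscr{F}_\xi)$ of the Reeb foliation is isomorphic, as a complex vector bundle, to the contact distribution $(D,\Phi)$; hence the basic first Chern class maps to the Chern class of $D$, that is $\iota(c_1^B)=c_1(D)$. Second, since $\eta$ is a globally defined $1$-form on $S$, the basic class $[d\eta]$ becomes exact once we forget basic-ness, so $\iota[d\eta]=0$. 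These already show that (i) is necessary for a Sasaki-Einstein deformation: a transverse K\"ahler--Einstein metric satisfies $\Ric^T=\kappa\,\omega^T$ with $\kappa>0$, whence $c_1^B=\tfrac{1}{2\pi}[\rho^T]=\tfrac{\kappa}{4\pi}[d\eta]$.

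For (i)\,$\Rightarrow$\,(ii): if $c_1^B=a[d\eta]$ then $c_1(D)=\iota(c_1^B)=a\,\iota[d\eta]=0$, while positivity of $c_1^B$ follows since $\omega^T=\tfrac12 d\eta$ is a positive transverse $(1,1)$-form and $a>0$. The reverse implication (ii)\,$\Rightarrow$\,(i) is the heart of the matter and rests on the long exact (basic Gysin) sequence relating $H^*_B(\mathscr{F}_\xi)$ to $H^*(S,\R)$, whose relevant segment reads $H^0_B(\mathscr{F}_\xi)\xrightarrow{\cup[d\eta]}H^2_B(\mathscr{F}_\xi)\xrightarrow{\iota}H^2(S,\R)$, the first map being cup product with $[d\eta]$. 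Since $H^0_B(\mathscr{F}_\xi)=\R$, its image is exactly $\R\,[d\eta]$, so exactness gives $\ker\iota=\R\,[d\eta]$. Thus $c_1(D)=0$ forces $c_1^B\in\R\,[d\eta]$, say $c_1^B=a[d\eta]$, and the hypothesis $c_1^B>0$ together with positivity of $[d\eta]$ pins down $a>0$, which is (i).

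For the equivalence with (iii) I would compare the canonical bundle $\mathbf{K}_{C(S)}$ of the cone with the transverse canonical bundle $\mathbf{K}^T=\Lambda^m(D^{1,0})^*$. The nowhere vanishing holomorphic field $\tilde{\xi}-iJ\tilde{\xi}$ splits $T^{1,0}C(S)$ as a trivial line summand plus the pullback of $D^{1,0}$, so under $H^2(C(S))\cong H^2(S)$ the class $c_1(\mathbf{K}_{C(S)})$ corresponds to $-c_1(D)$; consequently a nowhere vanishing section of $\mathbf{K}^\ell_{C(S)}$ forces $c_1(D)=0$, giving (iii)\,$\Rightarrow$\,(ii). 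Conversely, starting from (i)/(ii), the proportionality $c_1^B=a[d\eta]$ means the transverse Ricci form and $a\,d\eta$ differ by $i\partial_B\bar\partial_B$ of a basic function; multiplying a local transverse holomorphic volume form by the corresponding basic conformal factor yields a global nowhere vanishing transverse section of a suitable power of $\mathbf{K}^T$, and lifting it to the cone while fixing its radial homogeneity produces the desired $\Omega$, the eigenvalue $i(m+1)$ being forced by $\tilde{\xi}=J(r\partial_r)$ acting as $i$ times the Euler field on holomorphic objects.

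The step I expect to be the main obstacle is exactly this last construction: upgrading the cohomological identity $c_1^B=a[d\eta]$ to an honest nowhere vanishing holomorphic section requires the transverse $\partial_B\bar\partial_B$-lemma to turn a mere proportionality of classes into an actual identity of forms, and one must track the correct torsion power $\ell$ (which appears in the quasi-regular and orbifold cases, where $\mathbf{K}^T$ is only torsion rather than trivial) together with the precise homogeneity eigenvalue. By contrast, the equivalence (i)\,$\Leftrightarrow$\,(ii) is essentially formal once the identification $\iota(c_1^B)=c_1(D)$ and the consequence $\ker\iota=\R\,[d\eta]$ of the basic Gysin sequence are in hand.
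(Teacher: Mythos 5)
Your treatment of (i) $\Leftrightarrow$ (ii) is correct, and it is in substance the argument of~\cite{FOW} (Proposition 4.3) that the paper simply cites: the identification $\iota(c_1^B)=c_1(D)$ together with exactness of $H^0_B(\mathscr{F}_\xi)\xrightarrow{\cup[d\eta]}H^2_B(\mathscr{F}_\xi)\xrightarrow{\iota}H^2(S,\R)$, which yields $\ker\iota=\R[d\eta]$. The two implications involving (iii), however, both have genuine gaps. Your (iii) $\Rightarrow$ (ii) proves only $c_1(D)=0$ and never produces the positivity $c_1^B>0$; it cannot, because you nowhere use the hypothesis $\mathcal{L}_\xi\Omega=i(m+1)\Omega$, and without it the implication is false. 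For example, if $M$ is canonically polarized and $C(S)=(\mathbf{K}_M^{-1})^\times$ carries the standard Sasaki structure, then $\mathbf{K}_{C(S)}\cong\pi^*\mathbf{K}_M$ is trivialized by the tautological section, yet $c_1^B<0$; the trivializing section has the opposite-sign eigenvalue under $\mathcal{L}_\xi$. The correct deduction is (iii) $\Rightarrow$ (i): the eigenvalue condition and holomorphy of $\Omega$ force $h=\frac{1}{\ell}\log\|\Omega\|^2_{\omega}$ (norm induced by the cone metric) to be basic, so $\rho=i\partial\ol{\partial}h$ with basic potential; combined with the identity $\rho=\rho^T-(m+1)\,d\eta$ of (\ref{eq:ricci-cone}) this gives $[\rho^T]=(m+1)[d\eta]$ in basic cohomology, which is (i) with a positive constant.

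The deeper gap is in (i) $\Rightarrow$ (iii), which is where the paper's actual proof lives. Multiplying local transverse holomorphic volume forms by a basic conformal factor does not produce a global section: local holomorphic trivializations of $\mathbf{K}^T$ (or of $\mathbf{K}_{C(S)}$) need not patch, and the obstruction is exactly the monodromy that you flag as ``the main obstacle'' and then leave unresolved. The paper's mechanism is the missing idea: after a $D$-homothety, (i) and the basic $\partial_B\ol{\partial}_B$-lemma give a basic $h$ with $\rho=i\partial\ol{\partial}h$; then $e^h\omega^{m+1}/(m+1)!$ defines a \emph{flat} hermitian metric on $\mathbf{K}_{C(S)}$, and parallel translation produces a multi-valued holomorphic section whose monodromy is a $U(1)$-character of $\pi_1(C(S))\cong\pi_1(S)$. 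Finiteness of this character --- this is where the integer $\ell$ genuinely comes from, and it requires an argument (e.g. a transverse Calabi-Yau deformation giving a Sasaki metric of positive Ricci curvature, hence $\pi_1(S)$ finite by Myers) --- makes the section single-valued on $\mathbf{K}^{\ell}_{C(S)}$, and homogeneity of $\omega$ together with invariance of $h$ gives $\mathcal{L}_{r\frac{\partial}{\partial r}}\Omega=(m+1)\Omega$, equivalently $\mathcal{L}_\xi\Omega=i(m+1)\Omega$. Your appeal to the transverse $\partial_B\ol{\partial}_B$-lemma supplies the potential $h$, but without the flat-metric/parallel-transport step (or some equivalent control of the monodromy character) the section $\Omega$ is never constructed.
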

If (\ref{prop:CY-cond-iii}) is satisfied then the singularity $X=C(S)\cup\{o\}$ is $\ell$-Gorenstein, meaning that $\mathbf{K}^{\ell}_{C(S)}$
is trivial.  Though, the condition of Proposition~\ref{prop:CY-cond} is stronger.  In fact the isolated singularity of $X$ is
rational (cf.~\cite{Bur}).  If $\ell=1$ we will say that $X$ is Gorenstein.

\begin{proof}
 Let $\rho$ denote the Ricci form of $(C(S),\ol{g})$, then easy computation shows that
 \begin{equation}\label{eq:ricci-cone}
  \rho =\rho^T -(2m+2)\frac{1}{2}d\eta.
 \end{equation}
If (i) is satisfied, there is a $D$-homothety so that $[\rho^T]=(2m+2)[\frac{1}{2}d\eta]$ as basic classes.
Thus there exists a smooth function $h$ with $\xi h=0=r\frac{\partial}{\partial r}h$ and
\begin{equation}
 \rho =i\partial\ol{\partial}h.
\end{equation}
This implies that $e^h \frac{\omega^{m+1}}{(m+1)!}$, where $\omega$ is the K\"{a}hler form of $\ol{g}$, defines
a flat metric $|\cdot|$ on $\mathbf{K}_{C(S)}$.  Parallel translation defines a multi-valued section which defines
a holomorphic section $\Omega$ of $\mathbf{K}^{l}_{C(S)}$ for some integer $l>0$ with
$|\Omega |=1$.  Then we have
\begin{equation}\label{eq:hol-form}
 \left(\frac{i}{2}\right)^{m+1}(-1)^{\frac{m(m+1)}{2}}\Omega\wedge\ol{\Omega} =e^h\frac{1}{(m+1)!}\omega^{m+1}.
\end{equation}
  From the invariance of $h$ and the fact that $\omega$ is homogeneous of degree 2, we see that
  $\mathcal{L}_{r\frac{\partial}{\partial r}}\Omega=(m+1)\Omega$.

  The equivalence of (i) and (ii) is easy (cf.~\cite{FOW} Proposition 4.3).
\end{proof}

\begin{xpl}\label{xpl:standard}
This is the most elementary construction of Sasaki manifolds.  Let $\mathbf{L}$ be a negative line bundle over
a complex manifold, or orbifold, $M$.  Then $\mathbf{L}$ has a hermitian metric $h$ with
$\omega_M =i\partial\ol{\partial}\log h$ a K\"{a}hler form.  Set $r_0^2 =h|z|^2$, then $\frac{r_0^2}{2}$ defines the
K\"{a}hler potential of a K\"{a}hler cone structure on $C(S)=\mathbf{L}^\times$, $\mathbf{L}$ minus the zero section,
with K\"{a}hler form $\omega =\frac{1}{2}\partial\ol{\partial}r_0^2$.
Thus $S=\{r_0 =1\}\subset\mathbf{L}$ has a natural Sasaki structure.
Note that $\eta$ is a, real valued, connection on the $S^1$ bundle $S$ and
$\omega^T =\frac{1}{2}d\eta=\omega_M$.  We call this Sasaki structure on $S=\{r_0 =1\}\subset\mathbf{L}$ the
\emph{standard} Sasaki structure.  This Sasaki structure is well defined up to deformations of the transverse K\"{a}hler
structure and $D$-homothetic transformations.

In particular if $\mathbf{L}= \mathbf{K}_M$, where $\mathbf{K}_M$ denotes the orbifold canonical bundle of $M$ if $M$ is an orbifold,
then this Sasaki structure on $S\subset\mathbf{L}^\times$ satisfies the conditions of Proposition~\ref{prop:CY-cond}.
\end{xpl}

We will make use of the notion of the Sasaki cone of a Sasaki structure~\cite{BGS1,BGS2}.
Given a Sasaki structure $(g,\xi,\eta,\Phi)$ with $D=\ker\eta$ and $\Phi|_D =J$, we consider the following set.
\begin{equation}
 \mathcal{S}(D,J)=\left\{
\begin{array}{c}
 S=(g,\xi,\eta,\Phi): S\text{ a Sasaki structure}\\
(\ker\eta, \Phi|_{\ker\eta})=(D,J)\\
\end{array}\right\}
\end{equation}
Thus $\mathcal{S}(D,J)$ is the set of Sasaki structures with underlying strictly pseudo-convex CR-structure $(D,J)$.

Let $\mathfrak{cr}(D,J)$ be the Lie algebra of the group $\mathfrak{CR}(D,J)$ of CR automorphisms $(D,J)$.
We say a vector field $X\in\mathfrak{cr}(D,J)$ is positive if $\eta(X)>0$ for any $(g,\xi,\eta,\Phi)\in\mathcal{S}(D,J)$.
If $\mathfrak{cr}^+(D,J)$ denotes the subset of positive vector fields, then it is easy to see that
\begin{equation}
 \begin{array}{ccc}
  \mathcal{S}(D,J) & \rightarrow & \mathfrak{cr}^+ (D,J)\\
  (g,\xi,\eta,\Phi) & \mapsto & \xi\\
 \end{array}
\end{equation}
is a bijection.  This leads to the following.
\begin{defn}
 Let $(D,J)$ be a CR structure associated to a Sasaki structure, then we have the \emph{Sasaki cone}
\[\kappa(D,J) = \mathcal{S}(D,J)/\mathfrak{CR}(D,J) \]
\end{defn}
The isotropy subgroup of an element $S$ is precisely the automorphism group
of the Sasaki structure, $\Aut(S)\subseteq\mathfrak{CR}(D,J)$.
\begin{thm}[\cite{BGS2}]\label{thm:CR-aut}
 Let $(D,J)$ be the CR structure associated to a Sasaki structure on a compact manifold.  Then the Lie algebra
$\mathfrak{cr}(D,J)$ decomposes as $\mathfrak{cr}(D,J)=\mathfrak{t}\oplus\mathfrak{p}$ where $\mathfrak{t}$ is the Lie
algebra of a maximal torus $T$ of dimension $k$, $1\leq k\leq m+1$, and $\mathfrak{p}$ is a completely reducible $T$-module.
Furthermore, every $X\in\mathfrak{cr}^+(D,J)$ is conjugate to a positive element of $\mathfrak{t}$.
\end{thm}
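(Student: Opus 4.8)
The plan is to isolate the genuine content of the statement: the existence of a maximal torus $T\subseteq\mathfrak{CR}(D,J)$ of the asserted dimension, together with the conjugacy of positive elements into its Lie algebra. The ``completely reducible $T$-module'' clause will then be automatic. First I would record the structural input that, since $(D,J)$ is a strictly pseudo-convex CR structure on the compact manifold $S$, the group $\mathfrak{CR}(D,J)$ is a finite-dimensional Lie group with finitely many components (via the Chern--Moser/Tanaka realization of a strictly pseudo-convex CR structure as a parabolic Cartan geometry, whose automorphism group is of finite type). This makes the Cartan--Iwasawa--Malcev theory available: I choose a maximal compact subgroup $K\subseteq\mathfrak{CR}(D,J)$, and inside it a maximal torus $T$ with Lie algebra $\mathfrak{t}$ of dimension $k$.

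Next I would produce the invariant splitting and the lower bound. Because $T$ is abelian, $\mathfrak{t}$ lies in the trivial isotypic component of the adjoint $T$-action on $\mathfrak{cr}(D,J)$, and since $T$ is compact every finite-dimensional $T$-module is completely reducible; hence there is a $T$-invariant complement $\mathfrak{p}$ with $\mathfrak{cr}(D,J)=\mathfrak{t}\oplus\mathfrak{p}$ and $\mathfrak{p}$ a completely reducible $T$-module. To see $k\ge 1$ and to tie $\mathfrak{t}$ to the Sasaki data, I would average: the positivity cone $\mathfrak{cr}^+(D,J)$ is convex and $\mathrm{Ad}(K)$-invariant (a CR automorphism permutes $\mathcal{S}(D,J)$ and hence preserves positivity), so $\xi_K=\int_K \mathrm{Ad}(k)\,\xi\,dk$ is a $K$-fixed positive element, where $\xi$ is the Reeb field of the given structure. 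By the bijection $\mathcal{S}(D,J)\cong\mathfrak{cr}^+(D,J)$ recalled above, $\xi_K$ is the Reeb field of a $K$-invariant Sasaki structure, so $K\subseteq\Aut(S)$ acts by isometries of a compact Riemannian manifold (confirming compactness of $K$). Moreover $\xi_K$ is $\mathrm{Ad}(K)$-fixed, hence central in $\mathfrak{k}$ and therefore contained in $\mathfrak{t}$, so $\mathfrak{t}\cap\mathfrak{cr}^+(D,J)\ne\varnothing$ and $k\ge 1$.

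For the upper bound $k\le m+1$ I would use that $T\subseteq K$ preserves the $K$-invariant Sasaki structure just produced; in particular $T$ acts effectively on $S$ preserving the contact structure $D=\ker\eta_K$. The standard rank estimate of toric contact geometry—an effective torus action preserving a contact structure on a $(2m+1)$-manifold has rank at most $m+1$—then gives $\dim T\le m+1$ (equivalently, $T$ acts effectively and holomorphically on the K\"ahler cone $C(S)$ of complex dimension $m+1$, and the rank of such an action is bounded by the complex dimension). Finally, for the conjugacy statement, let $X\in\mathfrak{cr}^+(D,J)$ be positive; by the bijection $X$ is the Reeb field of some Sasaki structure in $\mathcal{S}(D,J)$, hence a Killing field whose flow-closure is a compact torus $T_X\subseteq\Aut(S)\subseteq\mathfrak{CR}(D,J)$. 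Conjugating its ambient maximal compact subgroup to $K$, and then $T_X$ into $T$, I obtain $g\in\mathfrak{CR}(D,J)$ with $\mathrm{Ad}(g)X\in\mathfrak{t}$; since the $\mathrm{Ad}$-action preserves $\mathfrak{cr}^+(D,J)$, the element $\mathrm{Ad}(g)X$ is again positive.

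The main obstacle I anticipate is the very first step: rigorously invoking that $\mathfrak{CR}(D,J)$ is a Lie group with finitely many components, which is the CR analogue of the Myers--Steenrod theorem and is genuinely delicate for CR structures. A secondary point requiring care is the holomorphic extension of the torus action to the cone (or, as I prefer above, the effectiveness of the contact $T$-action on $S$) so that the rank bound $k\le m+1$ is precise. Once these two structural inputs are secured, the remaining arguments are soft representation theory of the compact torus $T$ together with the convexity and $\mathrm{Ad}$-invariance of the positivity cone.
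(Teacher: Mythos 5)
The first thing to note is that the paper does not prove this statement at all: it is quoted, with attribution, from the reference [BGS2] (Boyer--Galicki--Simanca, \emph{The Sasaki cone and extremal Sasakian metrics}) and is used in this article purely as imported input, so there is no in-paper proof to compare against. Measured against the argument in the cited source, your strategy is a legitimate variant. There the structural backbone is the dichotomy furnished by Schoen's theorem: for a compact strictly pseudoconvex CR manifold, $\mathfrak{CR}(D,J)$ is a \emph{compact} Lie group unless $(D,J)$ is the standard CR structure on $S^{2m+1}$, in which case the group is $PSU(m+1,1)$ and the claim is checked on its maximal compact subgroup; the compact case is classical torus theory, and the conjugacy of positive elements uses exactly your flow-closure observation (a positive field is the Reeb field of some Sasaki structure, hence Killing for the associated metric, hence has precompact flow). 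Your route---Cartan--Iwasawa--Malcev on the possibly noncompact group, averaging the Reeb field over a maximal compact $K$ to get $k\geq 1$, the isotropic-orbit/moment-map bound for $k\leq m+1$, and flow closure plus conjugacy of maximal compacts for the last clause---treats both cases uniformly and never needs to identify the exceptional sphere; it rests on the same two pillars (the Chern--Moser/Tanaka Lie group structure and the Sasaki-type hypothesis).

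Two places in your write-up need repair. First, ``$\xi_K$ is $\operatorname{Ad}(K)$-fixed, hence central in $\mathfrak{k}$'' is a non sequitur as written: being $\operatorname{Ad}(K)$-fixed only says that $\xi_K$ centralizes $\mathfrak{k}$ inside $\mathfrak{cr}(D,J)$; you must first show $\xi_K\in\mathfrak{k}$. The fix uses ingredients you already have on the table: $\Aut(S_K)$ is a compact subgroup of $\mathfrak{CR}(D,J)$ (it is closed in $\Isom(S,g_K)$, which is compact by Myers--Steenrod) and it contains $K$, so maximality of $K$ forces $\Aut(S_K)=K$; since the Reeb flow of $\xi_K$ preserves its own Sasaki structure, $\xi_K\in\mathfrak{k}$, and only then do centrality and $\xi_K\in\mathfrak{t}$ follow. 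Second, Chern--Moser/Tanaka gives that $\mathfrak{CR}(D,J)$ is a Lie group but does \emph{not} give finitely many components, so your appeal to Cartan--Iwasawa--Malcev is not yet justified as stated. Either import Schoen's theorem (compactness or $PSU(m+1,1)$, both almost connected), or---simpler---run the entire argument inside the identity component $G_0$: Iwasawa's theorem holds for connected Lie groups, every torus (being connected) lies in $G_0$, so maximal tori of $G_0$ and of $\mathfrak{CR}(D,J)$ coincide, and conjugating by elements of $G_0$ suffices for the final clause. With these two patches your proposal is a complete proof.
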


Considering Theorem~\ref{thm:CR-aut} let us fix a maximal torus $T$ of a maximal compact subgroup $G\subseteq\mathfrak{CR}(D,J)$
with Weyl group $\mathcal{W}$.  Let $\mathfrak{t}^+ =\mathfrak{t}\cap\mathfrak{cr}^+(D,J)$ denote the subset of positive elements.
Then we have
\begin{equation}
 \kappa(D,J) =\mathfrak{t}^+/\mathcal{W}.
\end{equation}

In practice we will alter a Sasaki structure $S=(g,\xi,\eta,\Phi)$ by varying in $\kappa(D,J)$ to $S'=(g',\xi',\eta', \Phi)$ and
then making a transverse K\"{a}hler deformation as in (\ref{eq:trans}) to $\tilde{S}=(\tilde{g},\xi',\tilde{\eta},\tilde{\Phi})$.
This is awkward to formalize since if $\phi\in C_B^{\infty}$ is not $T$ invariant, then the Sasaki cone of $\tilde{S}$ will have
dimension less than $k$.  So one generally fixes a torus $T^k$ and considers $T^k$-invariant Sasaki structures on $S$.
The case $k=m+1$ is well understood and is considered in the next section.

The proof of the following is easy.
\begin{prop}
 Suppose $(g,\xi,\eta,\Phi)$ is a Sasaki structure on $S$ satisfying Proposition~\ref{prop:CY-cond}, then every Sasaki structure
in $\kappa(D,J)$ satisfies Proposition~\ref{prop:CY-cond}.
\end{prop}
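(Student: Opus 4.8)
The plan is to prove the statement through the equivalent characterizations of Proposition~\ref{prop:CY-cond}, exploiting that the data recorded there refer only to the underlying complex cone and to the holomorphic $\C^*$-action generated by the Reeb field, both of which are well controlled as one moves within $\kappa(D,J)$. Recall from Theorem~\ref{thm:CR-aut} and the construction of the Sasaki cone that every structure in $\kappa(D,J)$ has Reeb field conjugate to an element of $\mathfrak{t}^+$, where $\mathfrak{t}$ is the Lie algebra of a maximal torus $T\subseteq\mathfrak{CR}(D,J)$. Since a CR automorphism of $(D,J)$ extends to a biholomorphism of the cone, $T$ acts holomorphically on $C(S)$, and the complex manifold $C(S)$ together with this $T$-action is common to all structures in $\mathcal{S}(D,J)$; only the Reeb field $\xi\in\mathfrak{t}^+$, and hence the radial function and transverse K\"{a}hler form, varies. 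I would therefore reduce the statement to a property of this single fixed complex cone.

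First I would dispatch the elementary half of condition~(ii). The class $c_1(D)\in H^2(S,\R)$ is the first Chern class of the contact distribution $D$ equipped with $\Phi|_D=J$, and this complex vector bundle is by definition the same for all elements of $\mathcal{S}(D,J)$; hence $c_1(D)=0$ holds simultaneously for every structure in $\kappa(D,J)$. For the remaining positivity I pass to condition~(iii), which is the most transparent on the fixed cone: the triviality of a power $\mathbf{K}^{\ell}_{C(S)}$, i.e.\ the existence of a nowhere vanishing holomorphic section $\Omega$, is a property of the complex manifold $C(S)$ alone and is thus insensitive to the choice of $\xi$. It then remains only to arrange the homogeneity $\mathcal{L}_{\xi'}\Omega=i(m+1)\Omega$ for an arbitrary $\xi'\in\mathfrak{t}^+$. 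Because $\mathbf{K}^{\ell}_{C(S)}$ is $T$-equivariant, I would decompose its space of holomorphic sections and choose $\Omega$ to be a $T$-weight vector of some weight $\lambda\in\mathfrak{t}^{\ast}$, so that $\mathcal{L}_{X}\Omega=i\lambda(X)\Omega$ for all $X\in\mathfrak{t}$; in particular $\mathcal{L}_{\xi'}\Omega=i\lambda(\xi')\Omega$. For the original Reeb field $\mathcal{L}_{\xi}\Omega=i(m+1)\Omega$ gives $\lambda(\xi)=m+1>0$, and once one knows $\lambda(\xi')>0$ a $D$-homothety, which moves $\xi'$ along its ray in $\mathfrak{t}^+$ and rescales the homogeneity degree, normalizes the constant to $m+1$ and yields condition~(iii) for $\xi'$.

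The main obstacle is showing that the weight $\lambda$ of the holomorphic volume form is positive on the \emph{entire} Sasaki cone $\mathfrak{t}^+$, not merely at the original $\xi$. I expect to obtain this from the geometry of the cone: for any $\xi'\in\mathfrak{t}^+$ the Euler field $r'\frac{\partial}{\partial r'}$ of the associated metric cone generates a scaling with compact link $S$ for which the apex is a normal ($\ell$-)Gorenstein singularity, and a nowhere vanishing section of $\mathbf{K}^{\ell}_{C(S)}$ must then have strictly positive homogeneity under this scaling, forcing $\lambda(\xi')>0$. Alternatively one may argue entirely on the level of condition~(i): writing $c_1^B=a[d\eta]$ for the original structure and transporting the identity $\rho=\rho^T-(2m+2)\tfrac12 d\eta$ of~(\ref{eq:ricci-cone}) across the fixed cone shows that $c_1^B>0$ persists, the point being that $\rho$ is the Ricci form of the fixed Ricci potential on $C(S)$ and its image in basic cohomology stays positive as the transverse Kähler class $[\tfrac12 d\eta]$ is deformed within the cone. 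Granting this positivity, the weight normalization is routine and the proof is complete.
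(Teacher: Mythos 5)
The paper records no argument for this proposition (it is stated with ``the proof of the following is easy''), so what follows assesses your proof on its own terms. Your overall skeleton is the right one --- fix the cone and the torus, view $\Omega$ as a $T$-eigenvector of weight $\lambda$, and reduce everything to positivity of $\lambda$ on $\mathfrak{t}^+$; this is exactly the mechanism visible in the toric case (Proposition~\ref{prop:CY-cond-toric}), and your disposal of $c_1(D)=0$ is correct. But the two steps carrying all the weight are not established. The first gap is your opening reduction: the claim that the complex manifold $C(S)$ with its $T$-action ``is common to all structures in $\mathcal{S}(D,J)$'' is essentially the entire content of the proposition, and the justification you offer for it is false. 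A CR automorphism of $(D,J)$ does \emph{not} in general extend to a biholomorphism of the cone: for the standard CR structure on $S^{2m+1}$ the CR automorphism group is $PSU(m+1,1)$, acting by automorphisms of the ball, and a non-linear element (which has poles on a hyperplane $\{\langle z,a\rangle =1\}$) admits no biholomorphic extension to $\C^{m+1}\setminus\{0\}$, by the boundary uniqueness theorem and analytic continuation. What is true, and what you actually need, is weaker but still requires proof: after arranging $\xi\in\mathfrak{t}$ (Theorem~\ref{thm:CR-aut}), any $X\in\mathfrak{t}$ satisfies $[X,\xi]=0$ and hence its trivial extension to $S\times\R_+$ is holomorphic for $\bar{J}_\xi$, so $T$ acts holomorphically on $C_\xi$; but the assertion that the cones $C_{\xi'}$ built from different $\xi'\in\mathfrak{t}^+$ (genuinely different complex structures on $S\times\R_+$) are mutually $T$-equivariantly biholomorphic is a theorem. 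One proof identifies each $C_{\xi'}\cup\{o\}$ with the spectrum of the same graded ring $\bigoplus_\alpha CR(S)_\alpha$ of $T$-finite CR functions on $(S,D,J)$, using CR extension to the Stein side together with $\xi'$-homogeneity to propagate weight-$\alpha$ functions over the whole cone. Without some such argument your reduction to a single fixed cone is an assumption, not a proof.

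The second gap is the positivity of $\lambda$ on $\mathfrak{t}^+$, which you correctly flag as the main obstacle but then justify incorrectly. It is not true that a nowhere vanishing section of $\mathbf{K}^\ell$ near a normal ($\ell$-)Gorenstein apex must have strictly positive homogeneity: take $\mathbf{L}$ negative over an elliptic curve, or any polarized Calabi--Yau $M$, as in Example~\ref{xpl:standard}; on $\mathbf{L}^\times$ the form $\frac{dw}{w}\wedge\pi^*\Omega_M$ is holomorphic, nowhere vanishing, and has homogeneity degree $0$ (cones over general type bases give negative degree), yet the apex is normal Gorenstein. The positivity must instead be extracted from the original structure, for instance as follows: the integral $\int_U \bigl(\Omega\wedge\ol{\Omega}\bigr)^{1/\ell}$ over a punctured neighborhood $U$ of the apex is metric-independent; computed in the $\xi$-cone structure it is comparable to $\int_0^\epsilon r^{2\lambda(\xi)/\ell-1}\,dr$, hence finite since $\lambda(\xi)=m+1>0$; computed in the $\xi'$-cone structure it is comparable to $\int_0^\epsilon (r')^{2\lambda(\xi')/\ell-1}\,dr'$, and finiteness forces $\lambda(\xi')>0$. (This is the same finiteness that makes the singularity rational, as the paper notes after Proposition~\ref{prop:CY-cond}, and it is the general analogue of the toric identity $(\gamma,\lambda_k)=-1$, which gives $\lambda=-\gamma>0$ on the whole Reeb cone.) Your fallback argument via condition (i) has the same defect in disguise: when the Reeb field changes, the basic cohomology changes with the foliation, so ``the class stays positive as one deforms within the cone'' is precisely the statement to be proved, not a step in its proof. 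Finally, a smaller repair: a weight component of $\Omega$ chosen from the decomposition of $H^0(\mathbf{K}^\ell_{C(S)})$ need not be nowhere vanishing; instead observe that for $X\in\mathfrak{t}$ the function $\mathcal{L}_X\Omega/\Omega$ is holomorphic and annihilated by $\xi$, hence (by holomorphy, using (\ref{eq:cont}) and $\bar{J}(r\partial_r)=\xi$) also by $r\frac{\partial}{\partial r}$, hence bounded and therefore constant --- so the given $\Omega$ is automatically a $T$-eigenvector.
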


\subsection{Toric Sasaki-Einstein manifolds}\label{subsect:toric}

In this section we recall the basics of toric Sasaki manifolds.  Much of what follows can be found in~\cite{MSY} or~\cite{FOW}.

\begin{defn}
 A Sasaki manifold $(S,g,\xi,\eta,\Phi)$ of dimension $2m+1$ is \emph{toric} if there is an effective action of an
 $m+1$-dimensional torus $T=T^{m+1}$ preserving the Sasaki structure such that $\xi$ is an element of the Lie algebra
 $\mathfrak{t}$ of $T$.  Equivalently, a toric Sasaki manifold is a Sasaki manifold $S$ whose K\"{a}hler cone
 $C(S)$ is a toric K\"{a}hler manifold.
\end{defn}

We have an effective holomorphic action of $T_\C \cong (\C^*)^{m+1}$ on $C(S)$ whose restriction to
$T\subset T_\C$ preserves the K\"{a}hler form $\omega =d(\frac{1}{2}r^2 \eta)$.  So there is a moment map
\begin{equation}\label{eq:moment-map}
\begin{gathered}
 \mu: C(S) \longrightarrow \mathfrak{t}^* \\
 \langle \mu(x),X\rangle = \frac{1}{2}r^2\eta(X_S (x)),
\end{gathered}
\end{equation}
where $X_S$ denotes the vector field on $C(S)$ induced by $X\in\mathfrak{t}$.  We have the
moment cone defined by
\begin{equation}
 \mathcal{C}(\mu) :=\mu(C(S)) \cup \{0\},
\end{equation}
which from~\cite{Ler} is a strictly convex rational polyhedral cone.  Recall that this means that there are vectors
$\lambda_i,i=1,\ldots,d$ in the integral lattice $\Z_T =\ker\{\exp:\mathfrak{t}\rightarrow T\}$ such that
\begin{equation}\label{eq:moment-cone}
 \mathcal{C}(\mu) =\bigcap_{j=1}^{d} \{y\in\mathfrak{t}^* : \langle\lambda_j,y\rangle\geq 0\}.
\end{equation}
The condition that $\mathcal{C}(\mu)$ is strictly convex means that it is not contained in any linear subspace of $\mathfrak{t}^*$.
It is cone over a finite polytope.
We assume that the set of vectors $\{\lambda_j\}$ is minimal in that removing one changes the set defined by
(\ref{eq:moment-cone}).  And we furthermore assume that the vectors $\lambda_j$ are primitive, meaning that
$\lambda_j$ cannot be written as $p\tilde{\lambda}_j$ for $p\in\Z$ and $\tilde{\lambda}_j\in\Z_T$.

Let $\inter\mathcal{C}(\mu)$ denote the interior of $\mathcal{C}(\mu)$.  Then the action of $T$ on
$\mu^{-1}(\inter\mathcal{C}(\mu))$ is free and is a Lagrangian torus fibration over $\inter\mathcal{C}(\mu)$.
There is a condition on the $\{\lambda_j\}$ for $S$ to be a smooth manifold.  Each face
$\mathcal{F}\subset\mathcal{C}(\mu)$ is the intersection of a number of facets
$\{y\in\mathfrak{t}^* :l_j(y)=\lambda_j \cdot y =0\}$.
For $S$ to be smooth each face $\mathcal{F}$ must be the intersection of $\codim\mathcal{F}$ facets.
Let $\lambda_{j_1},\ldots,\lambda_{j_a}$ be the corresponding
collection of normal vectors in $\{\lambda_j\}$, where $a$ is the codimension of $\mathcal{F}$.  Then the cone
$\mathcal{C}(\mu)$ is smooth if and only if
\begin{equation}\label{eq:smooth}
 \left\{ \sum_{k=1}^{a} \nu_{k}\lambda_{j_k} :\nu_k \in\R\right\}\cap\Z_T =\left\{\sum_{k=1}^{a} \nu_{k}\lambda_{j_k} :\nu_k \in\Z\right\}
\end{equation}
for all faces $\mathcal{F}$.

Note that $\mu(S)=\{y\in\mathcal{C}(\mu) : y(\xi)=\frac{1}{2}\}$.  The hyperplane $\{y\in\mathfrak{t}^* : y(\xi)=\frac{1}{2}\}$
is the \emph{characteristic hyperplane} of the Sasaki structure.
Consider the dual cone to $\mathcal{C}(\mu)$
\begin{equation}\label{eq:dual-cone}
 \mathcal{C}(\mu)^* =\{\tilde{x}\in\mathfrak{t} :\tilde{x}\cdot y\geq 0\text{ for all }y\in\mathcal{C}(\mu)\},
\end{equation}
which is also a strictly convex rational polyhedral cone by Farkas' theorem.  Then $\xi$ is in the interior of $\mathcal{C}(\mu)^*$.
Let $\frac{\partial}{\partial\phi_i},i=1,\ldots,m+1,$ be a basis of $\mathfrak{t}$ in $\Z_T$.
Then we have the identification $\mathfrak{t}^*\cong\mathfrak{t}\cong\R^{m+1}$ and write
\[ \lambda_j =(\lambda_j^1, \ldots,\lambda_j^{m+1}),\quad \xi=(\xi^1,\ldots,\xi^{m+1}). \]
If we set
\begin{equation}\label{eq:symp-coord}
y_i =\mu(x)\left(\frac{\partial}{\partial\phi_i}\right)\quad ,i=1,\ldots,m+1,
\end{equation}
then we have symplectic coordinates $(y,\phi)$ on $\mu^{-1}(\inter\mathcal{C}(\mu))\cong \inter\mathcal{C}(\mu)\times T^{m+1}$.
In these coordinates the symplectic form is
\begin{equation}
 \omega =\sum_{i=1}^{m+1} dy_i \wedge d\phi_i.
\end{equation}
The K\"{a}hler metric can be seen as in~\cite{Abr} to be of the form
\begin{equation}
 g=\sum_{ij} G_{ij}dy_i dy_j + G^{ij}d\phi_i d\phi_j,
\end{equation}
where $G^{ij}$ is the inverse matrix to $G_{ij}(y)$, and the complex structure is
\begin{equation}\label{eq:comp-st}
 \mathcal{I} =\left\lgroup
\begin{matrix}
 0 & -G^{ij} \\
 G_{ij} & 0 \\
\end{matrix}\right\rgroup
\end{equation}
in the coordinates $(y,\phi)$.  The integrability of $\mathcal{I}$ is $G_{ij,k} =G_{ik,j}$.  Thus
\begin{equation}
 G_{ij}=G_{,ij} :=\frac{\partial^2 G}{\partial y_i \partial y_j},
\end{equation}
for some strictly convex function $G(y)$ on $\inter\mathcal{C}(\mu)$.  We call $G$ the symplectic potential of the
K\"{a}hler metric.

One can construct a canonical K\"{a}hler structure on the cone $X=C(S)$, with a fixed holomorphic structure,
via a simple K\"{a}hler reduction of $\C^d$ (cf.~\cite{Gui} and~\cite{MSY}).  The symplectic potential of the
canonical K\"{a}hler metric is
\begin{equation}
 G^{can} =\frac{1}{2}\sum_{i=1}^{d}l_i (y)\log l_i (y).
\end{equation}
Let
\[ G_\xi =\frac{1}{2}l_{\xi}(y)\log l_{\xi} -\frac{1}{2}l_{\infty}(y)\log l_{\infty}(y),\]
where
\[ l_{\xi}(y)=\xi\cdot y, \text{ and } l_{\infty}(y)=\sum_{i=1}^d \lambda_i \cdot y.\]
Then
\begin{equation}\label{eq:can-pot}
 G_\xi ^{can} =G^{can} + G_\xi,
\end{equation}
defines a symplectic potential of a K\"{a}hler metric on $C(S)$ with induced Reeb vector field $\xi$.
To see this write
\begin{equation}
 \xi =\sum_{i=1}^{m+1} \xi^i \frac{\partial}{\partial\phi_i},
\end{equation}
and note that the Euler vector field is
\begin{equation}
 r\frac{\partial}{\partial r} =2\sum_{i=1}^{m+1}y_i\frac{\partial}{\partial y_i}.
\end{equation}
Thus we have
\begin{equation}\label{eq:sym-reeb}
 \xi^i =\sum_{j=1}^{m+1} 2G_{ij}y_j.
\end{equation}
Computing from (\ref{eq:can-pot}),
\begin{equation}\label{eq:can-pot-der}
 \left(G_\xi ^{can} \right)_{ij} =\frac{1}{2}\sum_{k=1}^d \frac{\lambda_k^i \lambda_k^j}{l_k (y)} +\frac{1}{2}\frac{\xi^i \xi^j}{l_\xi (y)}
 -\frac{1}{2}\frac{\sum_{k=1}^d \lambda_k^i \sum_{k=1}^d \lambda_k^j}{l_\infty (y)},
\end{equation}
and (\ref{eq:sym-reeb}) follows by direct computation.

The general symplectic potential is of the form
\begin{equation}
 G =G^{can} + G_\xi +g,
\end{equation}
where $g$ is a smooth homogeneous degree one function on $\mathcal{C}$ such that $G$ is strictly convex.

Note that the complex structure on $X=C(S)$ is determined up to biholomorphism by the associated moment
polyhedral cone $\mathcal{C}(\mu)$ (cf.~\cite{Abr} Proposition A.1).  The following follows easily from this discussion.
\begin{prop}
 Let $S$ be a compact toric Sasaki manifold and $C(S)$ its K\"{a}hler cone.  For any
 $\xi\in\inter\mathcal{C}(\mu)^*$ there exists a toric K\"{a}hler cone metric, and associated Sasaki structure on $S$,
 with Reeb vector field $\xi$.  And any other such structure is a transverse K\"{a}hler deformation, i.e.
 $\tilde{\eta} =\eta +2d^c \phi$, for a basic function $\phi$.
\end{prop}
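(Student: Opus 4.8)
The existence assertion is, in effect, a matter of assembling the computations made above. Given $\xi\in\inter\mathcal{C}(\mu)^*$ the plan is to take the symplectic potential $G_\xi^{can}=G^{can}+G_\xi$ of (\ref{eq:can-pot}) as the candidate and verify that it is a genuine symplectic potential, namely a strictly convex function on $\inter\mathcal{C}(\mu)$ whose metric extends smoothly over all of $C(S)$. First I would check that the correction $G_\xi$ is harmless at the boundary: since $\xi$ lies in the \emph{interior} of the dual cone one has $l_\xi(y)=\xi\cdot y>0$ for every $y\in\mathcal{C}(\mu)\setminus\{0\}$; moreover, because $\inter\mathcal{C}(\mu)^*\neq\emptyset$ the normals $\lambda_i$ span $\mathfrak{t}$, so $\sum_i\lambda_i\in\inter\mathcal{C}(\mu)^*$ and hence $l_\infty(y)=\sum_i\lambda_i\cdot y>0$ on $\mathcal{C}(\mu)\setminus\{0\}$ as well. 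Thus $G_\xi$ is smooth on $\mathcal{C}(\mu)\setminus\{0\}$ and introduces no singularity along the faces; all of the degeneration of the torus action over the strata of $\mathcal{C}(\mu)$ is governed by $G^{can}$, which satisfies the Guillemin boundary conditions precisely because of the smoothness hypothesis (\ref{eq:smooth}). For the strict convexity of the full Hessian I would appeal to the $\C^d$-reduction description of \cite{Gui,MSY}, where the potential $G_\xi^{can}$ is shown to define a smooth toric K\"ahler cone metric for every $\xi$ in the interior of $\mathcal{C}(\mu)^*$, this interior being exactly the admissible Reeb cone.

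With $G_\xi^{can}$ in place, the identification of the Reeb vector field is the direct calculation already recorded in (\ref{eq:can-pot-der}). Contracting that Hessian with $y_j$ and using $\lambda_k\cdot y=l_k$, $\xi\cdot y=l_\xi$, and $\sum_k\lambda_k\cdot y=l_\infty$, the three groups of terms collapse to $2\sum_j G_{ij}y_j=\xi^i$, which is (\ref{eq:sym-reeb}). Since $\xi=\mathcal{I}\bigl(r\tfrac{\partial}{\partial r}\bigr)$ with $r\tfrac{\partial}{\partial r}=2\sum_i y_i\tfrac{\partial}{\partial y_i}$, the form (\ref{eq:comp-st}) of the complex structure $\mathcal{I}$ shows that this is exactly the statement that the induced Reeb field equals the prescribed $\xi$, establishing the first assertion.

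For the second assertion I would argue in the complex picture. By \cite[Prop.~A.1]{Abr} the complex structure on $C(S)$ is determined up to biholomorphism by $\mathcal{C}(\mu)$, so any competing toric K\"ahler cone structure lives on the same complex manifold and shares the holomorphic field determined by $\xi$ and $J$; consequently the homogeneity field $r\tfrac{\partial}{\partial r}=-J\xi$ is common to both structures. If $\tfrac12 r_0^2$ and $\tfrac12\tilde r^2$ are the two homogeneous K\"ahler potentials, then both $r_0$ and $\tilde r$ satisfy $\bigl(r\tfrac{\partial}{\partial r}\bigr)\log r=1$ and are $\xi$-invariant, so $\phi:=\log\tilde r-\log r_0$ is basic, i.e. $\xi\contr d\phi=0$ and $r\tfrac{\partial}{\partial r}\,\phi=0$. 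Hence $\tilde r=r_0\exp\phi$, and by the computation recalled after (\ref{eq:trans}) this is precisely the transverse K\"ahler deformation $\tilde\eta=\eta+2d^c_B\phi$. The same conclusion is visible symplectically from the normal form $G=G^{can}+G_\xi+g$: any degree-one homogeneous $g$ satisfies $\sum_j g_{,ij}y_j=0$ by Euler's identity, hence leaves $2\sum_j G_{ij}y_j=\xi^i$ unchanged, so structures with a common Reeb field differ only by such a $g$, that is, by a transverse deformation.

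The one genuinely analytic obstacle is the point flagged in the first paragraph: showing that $G_\xi^{can}$ is strictly convex and yields a metric that extends smoothly across the toric strata for \emph{all} $\xi\in\inter\mathcal{C}(\mu)^*$, and not merely for a distinguished one. The negative third term in (\ref{eq:can-pot-der}) means positivity of the Hessian is not apparent term-by-term, so this is where one must lean on the global reduction-from-$\C^d$ construction of \cite{Gui,MSY} together with the face condition (\ref{eq:smooth}), rather than on a direct pointwise estimate.
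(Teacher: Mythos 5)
Your proposal is correct and takes essentially the same route as the paper, which states the proposition as following "easily from this discussion" --- namely from the potential $G_\xi^{can}=G^{can}+G_\xi$ of (\ref{eq:can-pot}), the Hessian computation (\ref{eq:can-pot-der}) yielding (\ref{eq:sym-reeb}), and the normal form $G=G^{can}+G_\xi+g$ with $g$ homogeneous of degree one, which are exactly the ingredients you assemble. Your added details (positivity of $l_\xi$ and $l_\infty$ on $\mathcal{C}(\mu)\setminus\{0\}$, the Euler-identity argument, and the basic-function identification $\tilde r=r_0 e^\phi$) simply fill in steps the paper leaves implicit.
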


Consider now the holomorphic picture of $C(S)$.  Since $C(S)$ is a toric variety
$(\C^*)^{m+1}\cong\mu^{-1}(\inter\mathcal{C})\subset C(S)$ is an dense orbit.  We introduce logarithmic
coordinates $(z_1,\ldots,z_{m+1}) =(x_1 +i\phi_1,\ldots, x_{m+1}+i\phi_{m+1})$ on
$\C^{m+1}/{2\pi i\Z^{m+1}}\cong (\C^*)^{m+1}\cong\mu^{-1}(\inter\mathcal{C})\subset C(S)$, i.e.
$x_j +i\phi_j =\log w_j$ if $w_j,j=1,\ldots,m+1$, are the usual coordinates on $(\C^*)^{m+1}$.
The K\"{a}hler form can be written as
\begin{equation}
 \omega =i\partial\ol{\partial}F,
\end{equation}
where $F$ is a strictly convex function of $(x_1,\ldots,x_{m+1})$.  One can check that
\begin{equation}
 F_{ij}(x) =G^{ij}(y),
\end{equation}
where $\mu=y=\frac{\partial F}{\partial x}$ is the moment map.  Furthermore, one can show $x=\frac{\partial G}{\partial y}$, and
the K\"{a}hler and symplectic potentials are related by the Legendre transform
\begin{equation}
 F(x)= \sum_{i=1}^{m+1} x_i \cdot y_i -G(y).
\end{equation}
It follows from equation (\ref{eq:symp-coord}) defining symplectic coordinates that
\begin{equation}
 F(x)= l_\xi (y) =\frac{r^2}{2}.
\end{equation}

We now consider the conditions in Proposition~\ref{prop:CY-cond} more closely in the toric case.
So suppose the Sasaki structure satisfies Proposition~\ref{prop:CY-cond}, thus we may assume
$c_1^B =(2m+2)[\omega^T]$.  Then equation~\ref{eq:ricci-cone} implies that
\begin{equation}
 \rho =-i\partial\ol{\partial}\log\det(F_{ij})=i\partial\ol{\partial}h,
\end{equation}
with $\xi h=0=r\frac{\partial}{\partial r}h$, and we may assume $h$ is $T^{m+1}$-invariant.
Since a $T^{m+1}$-invariant pluriharmonic function is an affine function, we have
constants $\gamma_1,\ldots,\gamma_{m+1}\in\R$ so that
\begin{equation}\label{eq:hol-CY}
 \log\det(F_{ij})=-2\sum_{i=1}^{m+1}\gamma_i x_i -h.
\end{equation}
In symplectic coordinates we have
\begin{equation}\label{eq:symp-CY}
 \det(G_{ij})=\exp(2\sum_{i=1}^{m+1}\gamma_i G_i +h).
\end{equation}
Then from (\ref{eq:can-pot}) one computes the right hand side to get
\begin{equation}
 \det(G_{ij})=\prod_{k=1}^d \left(\frac{l_k (y)}{l_\infty (y)}\right)^{(\gamma,\lambda_k)} (l_\xi (y))^{-(m+1)} \exp(h),
\end{equation}
And from (\ref{eq:can-pot-der}) we compute the left hand side of (\ref{eq:symp-CY})
\begin{equation}
 \det(G_{ij})=\prod_{k=1}^d(l_k (y))^{-1} f(y),
\end{equation}
where $f$ is a smooth function on $\mathcal{C}(\mu)$.  Thus $(\gamma,\lambda_k)=-1$, for $k=1,\ldots,d$.
Since $\mathcal{C}(\mu)^*$ is strictly convex, $\gamma$ is a uniquely determined element of $\mathfrak{t}^*$.

Applying $\sum_{j=1}^{m+1}y_j \frac{\partial}{\partial y_j}$ to (\ref{eq:symp-CY}) and noting that
$\det(G_{ij})$ is homogeneous of degree $-(m+1)$ we get
\begin{equation}\label{eq:reeb-const}
 (\gamma,\xi)=-(m+1).
\end{equation}

As in Proposition~\ref{prop:CY-cond} $e^h \det(F_{ij})$ defines a flat metric $\|\cdot\|$ on
$\mathbf{K}_{C(S)}$.  Consider the $(m+1,0)$-form
\[ \Omega = e^{i\theta}e^{\frac{h}{2}}\det(F_{ij})^{\frac{1}{2}} dz_1 \wedge\cdots\wedge dz_{m+1}.\]
From equation (\ref{eq:hol-CY}) we have
\[ \Omega =e^{i\theta}\exp(-\sum_{j=1}^{m+1}\gamma_j x_j)dz_1 \wedge\cdots\wedge dz_{m+1}.\]
If we set $\theta=-\sum_{j=1}^{m+1}\gamma_j \phi_j$, then
\begin{equation}\label{eq:hol-form-toric}
 \Omega=e^{-\sum_{j=1}^{m+1}\gamma_j z_j}dz_1 \wedge\cdots\wedge dz_{m+1}
\end{equation}
is clearly holomorphic on $U=\mu^{-1}(\inter\mathcal{C})$.  When $\gamma$ is not integral, then we take $\ell\in\Z_+$
such that $\ell\gamma$ is a primitive element of $\Z_T^* \cong\Z^{m+1}$.  Then
$\Omega^{\otimes\ell}$ is a holomorphic section of $\mathbf{K}^{\ell}_{C(S)}|_U$ which extends to
a holomorphic section of $\mathbf{K}^{\ell}_{C(S)}$ as $\|\Omega\|=1$.

It follows from (\ref{eq:hol-form-toric}) that
\begin{equation}
 \mathcal{L}_\xi \Omega =-i(\gamma,\xi)\Omega =i(m+1)\Omega.
\end{equation}
And note that we have equation (\ref{eq:hol-form}) from (\ref{eq:hol-CY}) and (\ref{eq:hol-form-toric}).

\begin{prop}\label{prop:CY-cond-toric}
Let $S$ be a compact toric Sasaki manifold.  Then the conditions of Proposition~\ref{prop:CY-cond}
are equivalent to the existence of $\gamma\in\mathfrak{t}^*$ such that
\begin{thmlist}
 \item $(\gamma,\lambda_k)=-1$, for $k=1,\ldots,d$,

 \item $(\gamma,\xi)=-(m+1)$, and

 \item there exists $\ell\in\Z_+$ such that $\ell\gamma\in\Z_T^* \cong\Z^{m+1}$
\end{thmlist}
Then (\ref{eq:hol-form-toric}) defines a nowhere vanishing section of $\mathbf{K}^{\ell}_{C(S)}$.
\end{prop}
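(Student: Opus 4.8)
The plan is to prove the two implications separately, noting that the forward direction has essentially been assembled in the discussion preceding the statement. First I would assume that $S$ satisfies Proposition~\ref{prop:CY-cond}; by the equivalence of its three conditions I may normalize, after a $D$-homothetic transformation, so that $c_1^B = (2m+2)[\omega^T]$. Equation~(\ref{eq:ricci-cone}) then yields a basic function $h$ with $\rho = i\partial\ol{\partial}h$, and since the Sasaki structure is toric I would average $h$ over $T^{m+1}$ to assume it is $T$-invariant. Comparing $\rho = -i\partial\ol{\partial}\log\det(F_{ij})$ with $\rho = i\partial\ol{\partial}h$ shows that $\log\det(F_{ij}) + h$ is pluriharmonic; being $T$-invariant it must be affine in the logarithmic coordinates $x_i$, which produces the constants $\gamma_i$ and the identity~(\ref{eq:hol-CY}).

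Next I would extract conditions (i) and (ii). Passing to symplectic coordinates via the Legendre transform converts~(\ref{eq:hol-CY}) into~(\ref{eq:symp-CY}), and I would evaluate $\det(G_{ij})$ in two ways. On one hand, substituting the canonical symplectic potential~(\ref{eq:can-pot-der}) into the right-hand side of~(\ref{eq:symp-CY}) expresses the determinant as $\prod_k \left(l_k(y)/l_\infty(y)\right)^{(\gamma,\lambda_k)} (l_\xi(y))^{-(m+1)}\exp(h)$. On the other hand, the block structure of the canonical metric forces $\det(G_{ij}) = \prod_k l_k(y)^{-1} f(y)$ with $f$ smooth and nonvanishing on $\mathcal{C}(\mu)$. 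Matching the order of the pole along each facet $\{l_k = 0\}$ then forces $(\gamma,\lambda_k) = -1$ for every $k$, which is (i); strict convexity of $\mathcal{C}(\mu)^*$ guarantees that $\gamma$ is the unique such element. Applying the Euler operator $\sum_j y_j\,\frac{\partial}{\partial y_j}$ to~(\ref{eq:symp-CY}) and using that $\det(G_{ij})$ is homogeneous of degree $-(m+1)$ gives~(\ref{eq:reeb-const}), namely (ii). Condition (iii) is recorded precisely because the flat parallel section is a priori multi-valued, and $\ell\gamma \in \Z_T^*$ is what is needed for $\Omega^{\otimes\ell}$ to descend to a genuine section of $\mathbf{K}^\ell_{C(S)}$.

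For the converse I would run the construction in reverse. Given $\gamma$ satisfying (i)--(iii), I define $\Omega$ by~(\ref{eq:hol-form-toric}) on the open orbit $U = \mu^{-1}(\inter\mathcal{C})$; condition (i) is exactly what makes $\|\Omega\| = 1$ for the flat Hermitian metric, so that $\Omega^{\otimes\ell}$ extends across the toric divisors to a nowhere-vanishing holomorphic section of $\mathbf{K}^\ell_{C(S)}$, using the integrality (iii). The Lie-derivative computation $\mathcal{L}_\xi\Omega = -i(\gamma,\xi)\Omega = i(m+1)\Omega$, where the last equality is (ii), then verifies condition (iii) of Proposition~\ref{prop:CY-cond}, completing the equivalence.

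The main obstacle is the matching step that yields (i): one must argue that the two expressions for $\det(G_{ij})$ genuinely agree as functions on all of $\mathcal{C}(\mu)$, and in particular that the factor $f$ in the second expression is smooth and nonzero up to the facets, so that comparing the pole orders along $\{l_k = 0\}$ legitimately forces the exponents $(\gamma,\lambda_k)$ to equal $-1$. The subtle point is controlling the behavior of $G^{can}$ and of $f$ near the boundary of the cone, where the coordinate system degenerates.
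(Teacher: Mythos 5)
Your proposal is correct and follows essentially the same route as the paper: the forward direction via the $T^{m+1}$-invariant pluriharmonic (hence affine) function giving (\ref{eq:hol-CY}), the two computations of $\det(G_{ij})$ with pole-order matching along the facets to force $(\gamma,\lambda_k)=-1$, the Euler operator for (\ref{eq:reeb-const}), and the converse via the explicit section (\ref{eq:hol-form-toric}) together with the Lie derivative computation $\mathcal{L}_\xi\Omega = -i(\gamma,\xi)\Omega$. The boundary-regularity subtlety you flag (smoothness and nonvanishing of $f$ up to the facets) is treated at the same level in the paper, which simply asserts it from the Guillemin--Abreu form of the symplectic potential.
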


In the toric case the conditions of Proposition~\ref{prop:CY-cond-toric} are equivalent to $X=C(S)\cup\{o\}$ being an $\ell$-Gorenstein singularity.

We will need the beautiful results of A. Futaki, H. Ono, and G. Wang on the existence of
Sasaki-Einstein metrics on toric Sasaki manifolds.
\begin{thm}[\cite{FOW,CFO}]\label{thm:FOW}
Suppose $S$ is a toric Sasaki manifold satisfying Proposition~\ref{prop:CY-cond-toric}.
Then we can deform the Sasaki structure by varying the Reeb vector field and then performing
a transverse K\"{a}hler deformation to a Sasaki-Einstein metric.  The Reeb vector field and transverse
K\"{a}hler deformation are unique up to isomorphism.
\end{thm}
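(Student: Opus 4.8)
The plan is to establish the theorem in two essentially independent stages: first to pin down a distinguished Reeb vector field $\xi_0\in\inter\mathcal{C}(\mu)^*$ by a variational principle, and then, with $\xi_0$ fixed, to solve a transverse complex Monge-Amp\`{e}re equation producing a transverse K\"{a}hler-Einstein metric. By the discussion following Proposition~\ref{prop:ricci} (the $\eta$-Einstein and $D$-homothety normalization) such a transverse metric, after rescaling, is exactly a Sasaki-Einstein structure. Throughout I would exploit the toric symmetry to reduce the PDE on $C(S)$ to a real Monge-Amp\`{e}re equation for the symplectic potential $G$ on the cone $\mathcal{C}(\mu)$.

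First I would fix the Reeb vector field. By Proposition~\ref{prop:CY-cond-toric}, the candidate Reeb fields satisfying the Calabi-Yau condition are precisely those $\xi\in\inter\mathcal{C}(\mu)^*$ on the affine hyperplane $(\gamma,\xi)=-(m+1)$, where $\gamma$ is the fixed element with $(\gamma,\lambda_k)=-1$. On this slice I would consider the normalized volume functional $\xi\mapsto\Vol(\xi)$, essentially the volume of the cross-section $\{y\in\mathcal{C}(\mu):(\xi,y)=\tfrac12\}$. Following the Martelli-Sparks-Yau computation, this functional is \emph{strictly convex} and proper on the slice, so it attains its minimum at a unique interior point $\xi_0$. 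The Euler-Lagrange equation at $\xi_0$ is precisely the vanishing of the transverse Futaki invariant of the associated Sasaki structure. This simultaneously selects $\xi_0$ and yields uniqueness of the Reeb field.

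With $\xi_0$ fixed, the remaining problem is to find a basic function $\phi$ so that the transverse deformation $\omega^T+i\partial_B\ol{\partial}_B\phi$ satisfies $\Ric^T=(2m+2)\omega^T$. For $T^{m+1}$-invariant solutions this complex Monge-Amp\`{e}re equation is equivalent, via~(\ref{eq:symp-CY}), to the real equation $\det(G_{ij})=\exp(2\sum_i\gamma_i G_i+h)$ for the symplectic potential $G=G^{can}+G_{\xi_0}+g$, with prescribed singular behaviour along the facets. I would solve it by the continuity method, deforming
\[ \det(G^t_{ij})=\exp\bigl(2t\textstyle\sum_i\gamma_i G^t_i+h^t\bigr),\qquad t\in[0,1], \]
openness following from the implicit function theorem applied to the positive transverse Laplacian. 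The heart of the argument, and the \textbf{main obstacle}, is closedness, i.e.\ the a priori estimates. The second- and higher-order (Calabi-type) estimates are the transverse analogues of Yau's argument and follow once a $C^0$ bound is in hand, so everything reduces to the zeroth-order estimate. Here the toric symmetry is decisive: on the polytope the $C^0$ estimate for the potential can be obtained as in Wang-Zhu's treatment of toric Fano manifolds, where it is precisely the vanishing of the Futaki invariant---guaranteed by the choice of $\xi_0$ in the first stage---that prevents the extrema of the potential from escaping to the boundary and thereby closes the estimate.

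Finally, for uniqueness of the transverse K\"{a}hler deformation I would argue that, with the Reeb field and transverse holomorphic structure fixed, two transverse K\"{a}hler-Einstein metrics in the same basic class are related by a transverse Bando-Mabuchi type argument: the positivity $\Ric^T>0$ together with the connectedness of the space of transverse K\"{a}hler potentials forces them to agree up to an automorphism of the Sasaki structure. Combined with the strict convexity from the first stage, this gives uniqueness up to isomorphism as claimed.
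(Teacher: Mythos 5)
Your proposal is correct in outline, and it rests on the same three pillars as the argument the paper attributes to~\cite{FOW,CFO}: the toric Monge-Amp\`{e}re estimates of Wang--Zhu~\cite{WaZh}, the Martelli--Sparks--Yau volume minimization~\cite{MSY}, and the identification of the critical-point condition for the volume functional with the vanishing of the transverse Futaki invariant $f_1$. The difference from the paper's route is one of ordering and of which PDE is solved. The proof sketched in the paper (following~\cite{FOW}) first proves, for an \emph{arbitrary} Reeb field satisfying Proposition~\ref{prop:CY-cond-toric}, existence of a transverse K\"{a}hler--Ricci \emph{soliton} $\rho^T-(2m+2)\omega^T=\mathcal{L}_X\omega^T$ (the transverse analogue of Wang--Zhu's theorem), and only afterwards uses volume minimization to locate the unique Reeb field on the slice~(\ref{eq:reeb-const}) at which $f_1$ vanishes, so that the soliton field $X$ is zero and the soliton is Einstein. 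You instead fix the Reeb field $\xi_0$ first by the same volume-minimization argument and then run the continuity method directly on the Einstein equation, using the vanishing of $f_1$ (equivalently, the barycenter-type condition on the characteristic polytope) to close the $C^0$ estimate. Both routes are sound: your direct approach is legitimate precisely because Wang--Zhu's $C^0$ estimate in the K\"{a}hler--Einstein case uses exactly the vanishing of the Futaki invariant, which your first stage guarantees; what it buys is that you never need the soliton theory at all. What the soliton-first route of~\cite{FOW} buys is the stronger intermediate statement that a transverse soliton exists for \emph{every} Reeb field in the Calabi--Yau slice, and a cleaner decoupling of the PDE existence theory from the choice of $\xi$. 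Your uniqueness argument (strict convexity of the volume functional for the Reeb field, plus a transverse Bando--Mabuchi/Tian--Zhu argument for the transverse K\"{a}hler--Einstein metric at fixed $\xi_0$) is in substance the argument of~\cite{CFO}, so nothing is missing there, though in a complete write-up the transverse Bando--Mabuchi step is the one place where you would need to supply (or cite) a genuinely Sasakian proof rather than a formal transposition of the K\"{a}hler one.
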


In~\cite{FOW} a more general result is proved.  It is proved that a compact toric Sasaki
manifold satisfying Proposition~\ref{prop:CY-cond-toric} has a transverse K\"{a}hler deformation
to a Sasaki structure satisfying the transverse K\"{a}hler Ricci soliton equation:
\[ \rho^T -(2m+2)\omega^T =\mathcal{L}_X \omega^T \]
for some Hamiltonian holomorphic vector field $X$.  The analogous result for toric Fano manifolds was
proved in~\cite{WaZh}.  A transverse K\"{a}hler Ricci soliton becomes a transverse K\"{a}hler-Einstein
metric, i.e. $X=0$, if the Futaki invariant $f_1$ of the transverse K\"{a}hler structure vanishes.
The invariant $f_1$ depends only on the Reeb vector field $\xi$.  The next step is to use a
volume minimization argument due to Martelli-Sparks-Yau~\cite{MSY} to show there is a unique $\xi$
satisfying (\ref{eq:reeb-const}) for which $f_1$ vanishes.

\begin{xpl}\label{xpl:FOW}
Let $M=\cps^2_{(2)}$ be the two-points blow up.  And Let $S\subset\mathbf{K}_M$ be the
$U(1)$-subbundle of the canonical bundle.  Then the standard Sasaki structure on $S$
satisfies (1) of Proposition~\ref{prop:CY-cond}, and it is not difficult to show that $S$
is simply connected and is toric.  See Example~\ref{xpl:standard}.
But the automorphism group of $M$ is not reductive, thus $M$ does not admit a K\"{a}hler-Einstein metric
due to Y. Matsushima~\cite{Mat}.  Thus there is no Sasaki-Einstein structure with the usual Reeb vector field.
But by Theorem~\ref{thm:FOW} there is a Sasaki-Einstein structure with a different Reeb vector field.

The vectors defining the facets of $\mathcal{C}(\mu)$ are
\[\lambda_1 =(1,0,0),\ \lambda_2 =(1,0,1),\ \lambda_3 =(1,1,2),\ \lambda_4 =(1,2,1),\ \lambda_5 =(1,1,0).\]
The Reeb vector field of the toric Sasaki-Einstein metric on $S$ was calculated in~\cite{MSY} to be
\[\xi =\left(3, \frac{9}{16}(-1+\sqrt{33}), \frac{9}{16}(-1+\sqrt{33})\right).\]
One sees that the Sasaki structure is irregular with the closure of the generic orbit being a two torus.
\end{xpl}

\section{Approximating metric}

\subsection{The Calabi Ansatz}\label{subsect:calabi}
The Calabi ansatz constructs a complete Ricci-flat K\"{a}hler metric on the total space of the canonical
bundle $\mathbf{K}_M$ of a K\"{a}hler manifold $(M,\omega)$, provided $M$ admits a K\"{a}hler-Einstein metric.
This condition is equivalent, up to homothety, to the standard Sasaki structure on
$S\subset\mathbf{K}_M$ being Einstein, where $S=\{r=1\}$ with the Sasaki structure $(g,\xi,\eta,\Phi)$ with
$\frac{1}{2}d\eta =\omega$ and $\xi$  generating the $S^1$ action on $\mathbf{K}_M$.
We describe an extension of the Calabi ansatz, due to A. Futaki~\cite{Fut3}, to the case where $S$ admits a Sasaki-Einstein 
structure for a possibly different Reeb vector field $\tilde{\xi}$, with the same K\"{a}hler cone.

Suppose $M$ is a Fano manifold and $\mathbf{L}^p =\mathbf{K}_M$ for a positive integer $p$.
Suppose there is an $\eta$-Einstein Sasaki structure $(g,\xi,\eta,\Phi)$ on the $U(1)$-bundle $S$
associated to $\mathbf{L}$, where $\xi$ is a possibly different Reeb vector field to that in the standard
Sasaki structure of Example~\ref{xpl:standard}.  Thus
\begin{equation}
\rho^T =\kappa\omega^T,
\end{equation}
where we set $\kappa=2p$.  Set $t=\log r$.  The Calabi ansatz searches for a K\"{a}hler form on
$\mathbf{L}$ of the form
\begin{equation}
\omega_\phi =\omega^T +i\partial\ol{\partial}F(t),
\end{equation}
where $F(t)$ is a smooth function on $(t_1,t_2)\subset (-\infty,\infty)$.  Define a new variable and function
\begin{gather}
\tau=F'(t) \\
\phi(\tau)=F''(t).
\end{gather}
We must require $\phi(\tau)>0$ for $\omega$ to be positive.  Also assume that $F'$ maps $(t_1,t_2)$ onto
$(0,b)$.  Then the Calabi ansatz is
\begin{equation}\label{eq:cal-ansatz}
\begin{split}
\omega_\phi & = \omega^T +dd^c F(t) \\
            & = (1+\tau)\omega^T +\phi(\tau)i\partial t\wedge\ol{\partial}t \\
            & = (1+\tau)\omega^T +\phi(\tau)^{-1}i\partial\tau\wedge\ol{\partial}\tau \\
\end{split}
\end{equation}
which defines a K\"{a}hler metric on
\begin{equation}
C(S)_{(t_1,t_2)} =\{e^{t_1}<r< e^{t_2}\}\subseteq C(S)\subset\mathbf{L}.
\end{equation}

Direct computation gives the equations
\begin{equation}\label{eq:vol}
\omega_\phi^{m+1} = (1+\tau)^m (m+1)\phi(\tau)dt\wedge d^c t\wedge(\omega^T)^m,
\end{equation}
\begin{equation}
\begin{split}
\rho_\phi & = \rho^T -i\partial\ol{\partial}\log((1+\tau)^m\phi(\tau)) \\
          & = \kappa\omega^T -i\partial\ol{\partial}\log((1+\tau)^m\phi(\tau)), \\
\end{split}
\end{equation}\label{eq:scalar}
\begin{equation}
\begin{split}
\sigma_\phi & = \frac{\sigma^T}{1+\tau} -i\Delta_\phi \log((1+\tau)^m\phi(\tau)) \\
            & = \frac{m\kappa}{1+\tau} -i\Delta_\phi \log((1+\tau)^m\phi(\tau)). \\
\end{split}
\end{equation}

It will be useful to know the relation between the curvature tensors of $\omega_\phi$ and $\omega^T$.
Denote them respectively by $R^\phi$ and $R^T$.  Denote by
$\zeta= r\frac{\partial}{\partial r} -i\xi$ the holomorphic vector field given by the Sasaki structure.
Let $U,V,X,Y$ be complex vector fields which are horizontal with respect to the 1-form
$\frac{dr}{r}+i\eta$ dual to $\zeta$.  Then we have
\begin{equation}\label{eq:curvature}
\begin{split}
& R^\phi (U,\ol{V},X,\ol{Y})  =(1+\tau)R^T (U,\ol{V},X,\ol{Y}) +\phi(\omega^T (U,\ol{V})\omega^T (X,\ol{Y})-\omega^T (U,\ol{Y})\omega^T (X,\ol{V})),\\
& R^\phi (U,\ol{V},\zeta,\ol{\zeta})  =(\phi -(1+\tau)^{-1}\phi^2 )i\omega^T (U,\ol{V}), \\
& R^\phi (\zeta,\ol{\zeta},\zeta,\ol{\zeta})  =-\ddot{\phi}+\phi^{-1}\dot{\phi}^2, \\
\end{split}
\end{equation}
where dots in the last line denote the derivative with respect to $t$.

We now consider the case of constant scalar curvature.  Calculation gives
\begin{equation}
\sigma_\phi = \frac{m\kappa}{1+\tau} -\frac{1}{(1+\tau)^m}\frac{d^2}{d\tau^2}((1+\tau)^m\phi).
\end{equation}
Setting $\sigma_\phi =c$ we get the differential equation
\begin{equation}
(\phi(1+\tau)^m)'' =\left(\frac{m\kappa}{(1+\tau)} -c\right)(1+\tau)^m,
\end{equation}
with the solutions
\begin{equation}\label{eq:prof1}
\phi(\tau) = \frac{\kappa}{m+1}(1+\tau) -\frac{c}{(m+1)(m+2)}(1+\tau)^2 +\frac{c_1\tau +c_2}{(1+\tau )^m},
\end{equation}
with constants $c_1$ and $c_2$.

The function
\begin{equation}\label{eq:geod}
s(t)=\int_{\tau_0}^{\tau(t)}\frac{dx}{\sqrt{\phi(x)}}
\end{equation}
gives the geodesic length along the $t$-direction.
We are interested in metrics with a complete end at infinity.  The following is a consequence of (\ref{eq:geod}).
\begin{prop}\label{prop:total-sp}
Let $\omega_\phi$ be the K\"{a}hler form of the Calabi ansatz on $\mathbf{L}^\times$ for an $\eta$-Einstein
Sasaki manifold.  Suppose $\phi$ is defined on $(c,\infty)$ and for some $c\geq 0$.  Then $\omega_\phi$ defines a metric
with a complete non-compact end toward $\tau=\infty$ on $\mathbf{L}$ if and only if $\phi$ grows at most quadratically as $\tau\rightarrow\infty$.
\end{prop}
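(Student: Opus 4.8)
The plan is to reduce completeness of the end toward $\tau=\infty$ to a one–dimensional integral test, and then read off the growth condition on $\phi$. First I would observe that, by definition (\ref{eq:geod}), the quantity $s(\tau)=\int_{\tau_0}^{\tau}\frac{dx}{\sqrt{\phi(x)}}$ is precisely the Riemannian arc length of $g_\phi$ along an integral curve of the radial field, with the transverse Kähler and Reeb directions held fixed. From the third line of (\ref{eq:cal-ansatz}) one reads off $|\nabla\tau|^2_{g_\phi}=\phi$, so the function $s$, regarded as a function on $C(S)$ depending only on $\tau$, satisfies $|\nabla s|_{g_\phi}=s'(\tau)\,|\nabla\tau|_{g_\phi}=\phi^{-1/2}\cdot\phi^{1/2}=1$; hence $s$ is $1$-Lipschitz for $\dist_{g_\phi}$. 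Thus I would claim that the end toward $\tau=\infty$ is complete if and only if $s(\tau)\to\infty$ as $\tau\to\infty$, i.e. if and only if $\int^{\infty}\frac{dx}{\sqrt{\phi(x)}}=\infty$.

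The geometric content is the equivalence just stated, and this is where the compactness of $S$ enters. Because $S$ is compact, every level set $\{\tau=\text{const}\}$ is compact, so any sequence leaving all compact subsets through this end must have $\tau\to\infty$. If the radial integral diverges, then $s\to\infty$ along any such sequence, and the $1$-Lipschitz bound $\dist_{g_\phi}(p_0,p)\ge|s(p)-s(p_0)|$ shows these sequences are $\dist_{g_\phi}$-unbounded; equivalently, distance-bounded sets near infinity are precompact, which is completeness at this end. Conversely, if the integral converges, the radial ray obtained by fixing a point of $S$ and letting $\tau\uparrow\infty$ has finite length $s(\infty)<\infty$ yet leaves every compact set, so it is an incomplete geodesic ray. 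I expect this step — verifying that divergence of the purely radial integral forces completeness of the full metric, ruling out escape along the compact $S$–directions — to be the main point; the transverse factor $(1+\tau)$ only increases lengths, so it poses no obstruction.

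Finally I would carry out the elementary integral test showing $\int^{\infty}\frac{dx}{\sqrt{\phi(x)}}=\infty$ if and only if $\phi$ grows at most quadratically. For the forward direction, if $\phi(\tau)\le C\tau^2$ for all large $\tau$, then $\frac{1}{\sqrt{\phi}}\ge\frac{1}{\sqrt{C}\,\tau}$ and the integral diverges logarithmically. For the converse I would argue by contraposition: if $\phi$ grows faster than quadratically, say $\phi(\tau)\ge C\tau^{2+\epsilon}$ for some $\epsilon>0$ and all large $\tau$, then $\frac{1}{\sqrt{\phi}}\le C^{-1/2}\tau^{-1-\epsilon/2}$ is integrable near $\infty$, whence $s(\infty)<\infty$. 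Combining with the previous paragraph yields the stated equivalence. As a check, every profile (\ref{eq:prof1}) has leading term at most quadratic in $\tau$, the quadratic coefficient being forced nonpositive by the requirement $\phi>0$, so these constant–scalar–curvature solutions all produce complete ends, consistent with Proposition~\ref{prop:total-sp}.
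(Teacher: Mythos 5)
Your proposal is correct and follows essentially the same route as the paper: the paper simply asserts the proposition as "a consequence of (\ref{eq:geod})," i.e.\ of the fact that $s(\tau)=\int_{\tau_0}^{\tau}\phi(x)^{-1/2}dx$ is the radial geodesic length, and your argument is exactly this observation carried out in detail (the $1$-Lipschitz function $s$, compactness of the level sets of $\tau$, and the integral test relating divergence of $\int^{\infty}\phi^{-1/2}dx$ to the growth of $\phi$). The only caveat, which afflicts the paper's statement rather than your argument, is that the final equivalence between divergence of the integral and ``at most quadratic growth'' is strict only under a polynomial-type dichotomy on $\phi$ (e.g.\ $\phi(\tau)=\tau^2\log^2\tau$ gives a complete end), which is harmless here since the profiles (\ref{eq:prof1}) and (\ref{eq:prof2}) actually used are of this type.
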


We now construct Ricci-flat metrics on a neighborhood of infinity on $\mathbf{L}$ with $\mathbf{L}^p =\mathbf{K}_M$,
where $p=\alpha -1$.  Thus $\kappa=2(\alpha-1)$.
The desired metric must be complete and have a pole of order $2\alpha$ at infinity.
Calculation gives
\begin{equation}
\begin{split}
\rho_\phi & =\kappa\omega^T -i\partial\ol{\partial}\log((1+\tau)^m\phi(\tau)) \\
          & =\left(\kappa-\frac{m\phi+(1+\tau)\phi'}{1+\tau}\right)\omega^T -\left(\left(\frac{m\phi}{1+\tau}\right)'+\phi''\right)\phi dt\wedge d^c t.\\
\end{split}
\end{equation}
Thus $\kappa-\frac{m\phi+(1+\tau)\phi'}{1+\tau}=0$ and $\left(\frac{m\phi}{1+\tau} +\phi'\right)'=0$.
Thus
\begin{equation}
\frac{m\phi}{1+\tau} +\phi' =\kappa.
\end{equation}
And solving this equation gives
\begin{equation}\label{eq:prof2}
 \phi(\tau)=\frac{\kappa}{m+1}(1+\tau) +\frac{a}{(1+\tau)^m}, \text{  for }a\in\R.
\end{equation}
Therefore $\omega_\phi$ is Ricci-flat and is complete toward infinity as $\phi(\tau)$ has less than quadratic growth.

Now solve $\frac{d\tau}{dt}=\phi(\tau)$ to get
\begin{equation}
 \tau=\left(c\frac{m+1}{\kappa}e^{\kappa t} -a\frac{m+1}{\kappa}\right)^{\frac{1}{m+1}}-1,\text{ for }c>0.
\end{equation}
After changing the constants $a,c$ we have
\begin{gather}
 \tau = F'(t)=(ce^{\kappa t} +a)^{\frac{1}{m+1}} -1 \label{eq:prof3}\\
 \phi = F''(t)=\frac{c\kappa}{m+1}(ce^{\kappa t} +a)^{-\frac{m}{m+1}} e^{\kappa t}. \label{eq:prof4}
\end{gather}
It follows that equation~(\ref{eq:vol}) becomes
\begin{equation}\label{eq:vol-part}
 \omega_\phi^{m+1} =c\kappa e^{\kappa t}d t\wedge d^c t\wedge(\omega^T)^m.
\end{equation}
Notice that
\begin{equation}\label{eq:potent}
 G=G(t)=\int_{t_0}^{t}(ce^{\kappa s} +a)^{\frac{1}{m+1}}ds
\end{equation}
is a K\"{a}hler potential for $\omega_\phi$, i.e. $\omega_\phi =i\partial\ol{\partial}G$.

Thus (\ref{eq:cal-ansatz}) defines a Ricci-flat K\"{a}hler metric on $\mathbf{L}^\times$ for
the profile function (\ref{eq:prof4}) depending on constants $a\geq 0$ and $c>0$.
This two parameter family includes homotheties.  Note that for $a=0$ this is just a rescaling
of the Sasaki cone metric as is seen by changing $\omega_\phi$ to the coordinate
$r' =r^{\frac{\kappa}{2(m+1)}}$.

\subsection{approximating metric}

Let $\mathbf{L}=\mathbf{K}_M$ be the canonical bundle of a Fano manifold $M$ with $\pi:\mathbf{L}\rightarrow M$.
Then as in Example~\ref{xpl:standard}  there is a standard Sasaki structure on
$S=\{r_0 =1\}\subset\mathbf{L}$ with K\"{a}hler potential $r_0^2 =h|z|^2$
on $C(S)=\mathbf{L}^\times$ for $h$ a hermitian metric on $\mathbf{L}$.
Let $\Psi\in\Omega^{m,0}(\mathbf{L})$ be the tautologically defined holomorphic $m$-form on the total space of
$\mathbf{L}$, i.e. for $u\in\mathbf{L}$, $\Psi(u)=\pi^* u$.  Define a $(m+1,0)$-form
\begin{equation}
 \Omega = \left(\frac{dr}{r} +i\eta\right)\wedge\Psi.
\end{equation}
If $dz_1 \wedge\cdots\wedge dz_m$ is a local section giving fiber coordinate $w$, then
\begin{equation}
 \Omega =w \left(\frac{dr}{r} +i\eta\right)\wedge dz_1 \wedge\cdots\wedge dz_m.
\end{equation}
One easily checks that $d\Omega =0$, thus $\Omega$ is holomorphic.  Also, $\Omega$ has a pole of order 2 at $\infty$,
and $\mathcal{L}_\xi \Omega=i\Omega$.

We assume now that $M$ is a Fano orbifold.  And let $\mathbf{L}$ be a line bundle on $M$ with $\mathbf{L}^p =\mathbf{K}_M$.
Suppose there exists an $\eta$-Einstein Sasaki structure on the link $S$ with holomorphic cone $C(S)=\mathbf{L}^\times$
with K\"{a}hler potential $\frac{r^2}{2}$, Reeb vector field $\xi$ and contact form $\eta$, with
$\rho^T =\kappa\omega^T$, $\kappa =2p$, and which is compatible with $\Omega$ in that
$\mathcal{L}_\xi \Omega =ip\Omega$.

Consider the holomorphic map $\varpi:\mathbf{L}\overset{\sm \otimes^p}{\rightarrow}\mathbf{K}_M$.  Let
$\Omega' \in\Omega^{m+1,0}(\mathbf{K}_M)$ be the holomorphic form defined above.  Define $\Omega=\varpi^*\Omega'$.
Then $\Omega$ has a pole of order $p+1$ along $\infty$.  We have $\mathcal{L}_\xi \Omega=ip\Omega$, and
it is clear that this is the holomorphic form in Proposition~\ref{prop:CY-cond}.

Write $\ol{\omega}$ for $\omega_\phi$ defined in (\ref{eq:cal-ansatz}) using the profile $\phi$ defined
in equation (\ref{eq:prof3}).  Then $\ol{\omega}$ defines a Ricci-flat K\"{a}hler metric in a neighborhood of
$\infty$ on $\mathbf{L}$.  From (\ref{eq:vol-part}) we have
\begin{equation}\label{eq:volume}
\ol{\omega}^{m+1} =\frac{1}{2}c\kappa r^{\kappa -1} dr\wedge\eta\wedge(\omega^T)^m.
\end{equation}
Let $\omega =rdr\wedge\eta +r^2 \omega^T$ be the K\"{a}hler form of the
$\eta$-Einstein Sasaki structure.  We make a $D$-homothety as in Section~\ref{subsect:intro} with
$a=\frac{p}{m+1}$ to a Sasaki-Einstein structure on $\mathbf{L}^\times$ with $r'=r^{\frac{p}{m+1}}$,
$\eta'=\frac{p}{m+1}\eta$, and $\xi'=\frac{m+1}{p}\xi$.  Let $\omega'=r'dr'\wedge\eta'+r'^2 \omega'^T$ be the K\"{a}hler
form.  Then an easy computation gives
\begin{equation}\label{eq:vol-homo}
 (\omega')^{m+1} =a^{m+2}r^{(a-1)(2m+2)}\omega^{m+1}=\left(\frac{p}{m+1}\right)^{m+2}r^{2(p-m-1)}\omega^{m+1}.
\end{equation}
Now since $\omega'$ is a Ricci-flat K\"{a}hler form we have
\begin{equation}\label{eq:vol-Ricci}
 \left(\frac{i}{2}\right)^{m+1}(-1)^{\frac{m(m+1)}{2}}\Omega\wedge\ol{\Omega} =e^h\frac{1}{(m+1)!}(\omega')^{m+1},
\end{equation}
with $\partial\ol{\partial}h=0$.  Since $\mathcal{L}_{\xi'} \Omega=i(m+1)\Omega$, we have
$\xi'h=r'\frac{\partial h}{\partial r'} =0$, i.e. $h$ is basic.  Thus $h$ is constant.
And from (\ref{eq:volume}), (\ref{eq:vol-homo}), and (\ref{eq:vol-Ricci}) we have
\begin{equation}\label{eq:vol-Calab-Sasak}
\begin{split}
\ol{\omega}^{m+1} & =\frac{1}{2}c\kappa r^{2p -1} dr\wedge\eta\wedge(\omega^T)^m \\
          & = \frac{1}{2(m+1)} c\kappa r^{2(p-m-1)}\omega^{m+1} \\
          & = \ol{c}\Omega\wedge\ol{\Omega},\\
\end{split}
\end{equation}
where $\ol{c}$ is a non-zero constant.

We summarize the properties of the K\"{a}hler metric $\ol{\omega}$ which first appeared in~\cite{Fut3}.
\begin{prop}\label{prop:approx}
 Let $M$ be a Fano orbifold, and let $\mathbf{L}$ be a line bundle on $M$ with $\mathbf{L}^\times$ non-singular
 with $\mathbf{L}^p  =\mathbf{K}_M$ where $p=\alpha -1$.
 Suppose there is an $\eta$-Einstein Sasaki structure on $S\subset\mathbf{L}^\times$ compatible with the
 holomorphic structure, with K\"{a}hler potential $\frac{r^2}{2}$, $\rho^T =\kappa\omega^T$, $\kappa =2p$, and which
 is compatible with the natural $m+1$-form $\Omega$ on $\mathbf{L}^\times$ in that
 $\mathcal{L}_\xi \Omega =ip\Omega$.
 Then the metric $\ol{\omega}=\omega_\phi=i\partial\ol{\partial}G$ with $\phi$ as in (\ref{eq:prof3})
 and $G$ is defined by (\ref{eq:potent}) defines a Ricci-flat metric $g$ on $\mathbf{L}^\times$.
 Furthermore, $\ol{\omega}$ is complete at infinity and has Euclidean volume growth.
 The curvature tensor $R_g$ of $g$ satisfies $\|\nabla^k R_g\|_g =O(\rho^{-2-k})$, where $\rho$ denotes the distance from a fixed point.
 And $\ol{\omega}^{m+1}=\ol{c}\Omega\wedge\ol{\Omega}$, so it has a pole of order $2\alpha$ along $\infty$.
\end{prop}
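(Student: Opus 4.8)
The strategy is to verify each asserted property of $\ol{\omega}=\omega_\phi$ by direct computation, using the profile function $\phi$ in (\ref{eq:prof3}) and the formulas already derived in Section~\ref{subsect:calabi}. Most of the work has in fact been assembled above; the proof is essentially a matter of collecting (\ref{eq:cal-ansatz}), (\ref{eq:prof3})--(\ref{eq:prof4}), (\ref{eq:geod}), Proposition~\ref{prop:total-sp}, the curvature formulas (\ref{eq:curvature}), and the volume identity (\ref{eq:vol-Calab-Sasak}), and checking that the hypotheses of Proposition~\ref{prop:total-sp} are met.

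First I would record that $\ol{\omega}=\omega_\phi=i\partial\ol{\partial}G$ with $G$ as in (\ref{eq:potent}) is K\"{a}hler: this requires $\phi(\tau)>0$, which follows from (\ref{eq:prof4}) since $c>0$, $\kappa>0$, and $a\geq0$ force $\phi>0$ wherever $ce^{\kappa t}+a>0$, i.e. on the neighborhood of infinity in question. The Ricci-flatness was already established in Section~\ref{subsect:calabi}: the profile (\ref{eq:prof2}) was obtained precisely by solving $\rho_\phi=0$, so $\ol{\omega}$ is Ricci-flat by construction. Completeness at infinity I would deduce from Proposition~\ref{prop:total-sp}: from (\ref{eq:prof4}) one sees $\phi(\tau)\sim\frac{\kappa}{m+1}(1+\tau)$ grows \emph{linearly} (hence at most quadratically) as $\tau\to\infty$, so the geodesic length integral (\ref{eq:geod}) diverges and the end is complete. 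Euclidean volume growth and the pole of order $2\alpha=2(p+1)$ both read off from (\ref{eq:vol-Calab-Sasak}): since $\ol{\omega}^{m+1}=\ol{c}\,\Omega\wedge\ol{\Omega}$ and $\Omega$ has a pole of order $p+1$ along $\infty$, the volume form has a pole of order $2(p+1)=2\alpha$, and integrating the right-hand side of the first line of (\ref{eq:vol-Calab-Sasak}) against $r$ gives the Euclidean growth rate $\Vol(B_\rho)\sim\rho^{2m+2}$.

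For the curvature decay I would use (\ref{eq:curvature}) together with the asymptotics $\phi\sim\frac{\kappa}{m+1}(1+\tau)$ and $\tau\sim r^\kappa$. The key point is to express everything in terms of the geodesic distance $\rho$. From (\ref{eq:geod}) and the linear growth of $\phi$ one gets $s(t)\sim\rho\sim r^{\kappa/2}$ up to a constant, equivalently $1+\tau\sim\rho^2$. Substituting into the three lines of (\ref{eq:curvature}): the horizontal curvature $R^\phi(U,\ol V,X,\ol Y)$ scales like $(1+\tau)R^T+\phi\cdot(\omega^T)^2$ against a metric of size $1+\tau$, so after normalizing by the metric one obtains components of size $O((1+\tau)^{-1})=O(\rho^{-2})$; similarly the mixed and pure-$\zeta$ components, using $\phi-(1+\tau)^{-1}\phi^2=-a(m+2)/((m+1)(1+\tau)^m)$ and $-\ddot\phi+\phi^{-1}\dot\phi^2$, decay at least as fast. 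This yields $\|R_g\|_g=O(\rho^{-2})$, and the derivative estimates $\|\nabla^k R_g\|_g=O(\rho^{-2-k})$ follow because each covariant derivative in the scaling-invariant frame picks up one more factor of $\rho^{-1}$ (the metric is asymptotically a cone, so differentiation along the radial and transverse directions each costs one power of $\rho$).

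The main obstacle is the curvature estimate, and specifically making the scaling bookkeeping rigorous. One must choose an orthonormal frame adapted to the splitting into horizontal directions, the $\zeta=r\partial_r-i\xi$ direction, and their conjugates, compute the metric-normalized components of $R^\phi$ from (\ref{eq:curvature}), and carefully track how powers of $(1+\tau)$ and of $\phi$ combine. The transverse curvature $R^T$ is bounded (it lives on the compact link/leaf space), so the only subtlety is confirming that after dividing by the appropriate powers of the metric coefficients every component is genuinely $O(\rho^{-2})$, with no cancellation failing to occur in the leading order; the pure-$\zeta$ term $-\ddot\phi+\phi^{-1}\dot\phi^2$ must be checked to vanish to the right order using (\ref{eq:prof4}). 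Extending to the derivatives then requires the standard observation that on an asymptotically conical Ricci-flat end the natural scaling gives each extra covariant derivative an extra $\rho^{-1}$, which I would justify by the homogeneity of the Calabi ansatz under $r\mapsto\lambda r$.
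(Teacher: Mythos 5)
Your proposal is correct and takes essentially the same route as the paper: the paper likewise assembles the statement from the ODE derivation (\ref{eq:prof2}) for Ricci-flatness, Proposition~\ref{prop:total-sp} for completeness, the volume identities (\ref{eq:vol-part}) and (\ref{eq:vol-Calab-Sasak}) for Euclidean growth and the pole order $2\alpha$, and it defers the curvature decay to the scaling argument in the proof of Proposition~\ref{prop:bound-geo}, where the relation $R_b^*\omega_{\phi,a}=e^{\frac{\kappa b}{m+1}}\omega_{\phi,\frac{a}{b}}$ together with uniform bounds over the compact parameter family $a/b\in[0,a_0]$ makes precise exactly the ``homogeneity'' bookkeeping you invoke for the derivative estimates (note the ansatz with $a\neq 0$ is not literally homogeneous; the scaling moves the parameter, which is why the compact-family uniformity is needed). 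The one slip is your closed form $\phi-(1+\tau)^{-1}\phi^2=-a(m+2)/\bigl((m+1)(1+\tau)^m\bigr)$, which fails in general since the term $\frac{\kappa}{m+1}\bigl(1-\frac{\kappa}{m+1}\bigr)(1+\tau)$ survives unless $\kappa=m+1$; however, after dividing by the metric factors $(1+\tau)\phi\sim(1+\tau)^2$ this component is still $O(\rho^{-2})$, so your stated conclusion is unaffected.
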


That $\ol{\omega}$ has Euclidean volume growth follows easily for (\ref{eq:vol-part}).
The the asymptotic decay of $R_g$ follows from (\ref{eq:curvature}) and the ansatz (\ref{eq:cal-ansatz}) with
$\phi$ given by (\ref{eq:prof4}) and will be made clear in the proof of Proposition~\ref{prop:bound-geo}.

\section{proof of main theorem}

\subsection{normal bundle of a divisor}
Let $D\subset X$ be a divisor with $\alpha[D]=-K_X, \alpha >1.$  Let $N$ be the total space of the
normal bundle $N_D \cong [D]|_D$ to $D$, with $D\subset N$ the zero section.  Let
$\mathfrak{p}\subset\mathcal{O}(X)$ and $\tilde{\mathfrak{p}}\subset\mathcal{O}(N)$ be the ideal sheaves
of $D\subset X$ and $D\subset N$ respectively.  Denote by $D_{(\nu)}=(D,\mathcal{O}_\nu)$, where
$\mathcal{O}_\nu =\mathcal{O}(X)/{\mathfrak{p}^\nu}|_D$, the $\nu$-th infinitesimal neighborhood of $D$ in $X$.
Let $\tilde{D}_\nu$ the $\nu$-th infinitesimal neighborhood of $D$ in $N$.  We have
$D_{(2)}\cong \tilde{D}_{(2)}$.  If $\phi_k :D_{(k)}\cong \tilde{D}_{(k)}$ for $k\geq 2$,
is an isomorphism, then the obstruction to lifting to an isomorphism
$\phi_{k+1} :D_{(k+1)}\cong \tilde{D}_{(k+1)}$ is in $H^1(D,\Theta_X\otimes\mathcal{O}(-kD)|_D)$
(cf.~\cite{Gra} or~\cite{Gri}).  Thus we have condition (\ref{cond}) as necessary in order
to approximate a holomorphic neighborhood of $D\subset X$ with a neighborhood in the
normal bundle $N_D$ by a map whose jet is holomorphic to high order along $D$.

Since we assume condition (\ref{cond}) holds, we have an isomorphism
$\phi_\nu :D_{(\nu)}\cong \tilde{D}_{(\nu)}$ for arbitrary large $\nu\geq 2$.  Then $\phi_\nu$ defines a jet
\[J_D^\nu \phi_\nu \in J_D^\nu\diff_D(V,U),\]
along $D$, where $\diff_D(V,U)$ denotes diffeomorphisms fixing $D$ where $V$ and $U$ are small tubular
neighborhoods of $D$ in $X$ and $N$.  Provided $V$ and $U$ are sufficiently small, there is a diffeomorphism
$\psi\in\diff_D(U,V)$ with $J^\nu \psi=J^\nu \phi_\nu$ (cf.~\cite{Cer}, Ch. II).

We collect some vanishing results which can be used in applying Theorem~\ref{thm:main}.
Let $X$, $\dim X\geq 3$, be a Fano manifold and $D\subset X$ be a smooth divisor with $\alpha[D]=c_1(X) >0$
with $\alpha >1$.  Then $c_1(D)=(\alpha -1)[D]|_D>0$, so $D$ is Fano as well.
Suppose that $D$ is toric.
We have the following:
\begin{prop}\label{prop:cond}
Suppose either $\alpha\leq 2$, or $X$ is toric and $\dim X\geq 4$.  Then
$H^1(D,\Theta_X \otimes\mathcal{O}(-kD)|_D) =0$ for all $k\geq 2$.
\end{prop}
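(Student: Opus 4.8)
The plan is to reduce this to positivity (Kodaira / Bott–Danilov–Steenbrink) vanishing theorems, treating the two hypotheses with two different exact sequences. Throughout set $L:=N_D=[D]|_D$ (the ample normal bundle, \emph{not} the conormal $\mathbf{L}$), so that $\mathcal{O}(-kD)|_D=L^{-k}$, and write $m:=\dim D=n-1$. Since $D$ is smooth projective toric I will use three facts: (a) Bott vanishing, $H^q(V,\Omega^p_V\otimes A)=0$ for $q>0$ whenever $A$ is ample on a smooth complete toric $V$; (b) its Serre dual, via $(\Omega^p_V\otimes A)^\vee\otimes K_V\cong\Omega^{\dim V-p}_V\otimes A^{-1}$, namely $H^q(V,\Omega^p_V\otimes A^{-1})=0$ for $q<\dim V$; and (c) the toric Hodge vanishing $H^q(V,\Omega^p_V)=0$ for $p\neq q$. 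Adjunction gives $c_1(K_D)=-(\alpha-1)c_1(L)$, matching $c_1(D)=(\alpha-1)c_1(L)$.

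For $\alpha\le 2$ I would work on $D$, twisting the normal bundle sequence $0\to\Theta_D\to\Theta_X|_D\to N_D\to 0$ by $L^{-k}$ to obtain $0\to\Theta_D\otimes L^{-k}\to\Theta_X|_D\otimes L^{-k}\to L^{1-k}\to 0$. For $k\ge 2$ the bundle $L^{1-k}$ is anti-ample, so $H^1(D,L^{1-k})=0$ by Kodaira vanishing once $1<m$, i.e. $n\ge 3$; the long exact sequence then reduces the claim to $H^1(D,\Theta_D\otimes L^{-k})=0$. By Serre duality this is dual to $H^{m-1}(D,\Omega^1_D\otimes L^k\otimes K_D)$, and $c_1(L^k\otimes K_D)=(k-\alpha+1)c_1(L)$ lies in the ample cone since $k\ge 2\ge\alpha$ forces $k-\alpha+1\ge 1>0$. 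Bott vanishing (a) on $D$ then kills this group because $m-1\ge 1$. This uses only that $D$ is toric and $n\ge 3$.

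For $X$ toric with $n\ge 4$ I would instead stay on $X$, tensoring the ideal-sheaf sequence of $D$ by the locally free $\Theta_X\otimes\mathcal{O}(-kD)$ to get $0\to\Theta_X\otimes\mathcal{O}(-(k+1)D)\to\Theta_X\otimes\mathcal{O}(-kD)\to\big(\Theta_X\otimes\mathcal{O}(-kD)\big)|_D\to 0$. Its long exact sequence sandwiches $H^1\big(D,\Theta_X\otimes\mathcal{O}(-kD)|_D\big)$ between $H^1(X,\Theta_X\otimes\mathcal{O}(-kD))$ and $H^2(X,\Theta_X\otimes\mathcal{O}(-(k+1)D))$, so it suffices to prove $H^q(X,\Theta_X\otimes\mathcal{O}(-jD))=0$ for $(q,j)=(1,k)$ and $(2,k+1)$. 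Serre duality identifies this with $H^{n-q}(X,\Omega^1_X\otimes B)$, where $B=\mathcal{O}(jD)\otimes K_X$ has $c_1(B)=\tfrac{j-\alpha}{\alpha}\,c_1(-K_X)$. Since $c_1(-K_X)$ is ample and $\pic X$ is torsion free, $B$ is ample, trivial, or anti-ample according as $j>\alpha$, $j=\alpha$, or $j<\alpha$, and facts (a), (c), (b) respectively give $H^{n-q}(X,\Omega^1_X\otimes B)=0$ for $q=1,2$ provided $n\ge 4$.

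The step I expect to be the main obstacle is precisely the borderline trivial case $B\cong\mathcal{O}_X$, which occurs when $\alpha\in\Z$ and $j=\alpha$: neither Bott vanishing nor its dual applies, and one must invoke (c), $H^{n-q}(X,\Omega^1_X)=0$ for $n-q\neq 1$. For $q\in\{1,2\}$ this requires $n\neq 2,3$, which is exactly why the toric case needs $n\ge 4$; the parallel pinch in the first argument is the ampleness of $L^k\otimes K_D$, which is what confines it to $\alpha\le 2$. I would finish by checking that the trichotomy for $B$ is exhaustive — this is where torsion-freeness of $\pic X$ for smooth complete toric $X$ enters, so that $c_1(B)=0$ forces $B$ trivial rather than merely numerically trivial.
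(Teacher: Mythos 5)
Your proposal is correct. The first half is exactly the paper's argument: the normal bundle sequence on $D$ twisted by $L^{-k}$, with $H^1(D,L^{1-k})$ killed by Kodaira vanishing and $H^1(D,\Theta_D\otimes L^{-k})$ killed by Serre duality plus Bott vanishing on the toric manifold $D$. For the toric case, however, the paper offers nothing beyond the sentence that it is ``a similar application of the Bott vanishing theorem,'' so your second argument is a genuine completion rather than a restatement, and it uses a different decomposition: you replace the normal bundle sequence on $D$ by the ideal-sheaf (restriction) sequence on $X$, and apply Bott/Danilov vanishing on $X$ itself after Serre duality. This buys two things. First, it uses only the toric structure of $X$ and never that of $D$ or of the embedding, which is consistent with --- and in fact explains --- the paper's remark that $D\subset X$ is not assumed to be an invariant embedding. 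Second, it isolates exactly where $\dim X\geq 4$ is forced: the numerically trivial twist $j=\alpha$ (possible only for integral $\alpha$), where one needs $H^{n-1}(X,\Omega^1_X)=H^{n-2}(X,\Omega^1_X)=0$, i.e. Danilov's $h^{p,q}=0$ for $p\neq q$, which fails precisely when $n-q=1$. For comparison, one could also push the paper's own case-one template through for $\alpha>2$: in $H^{n-2}(D,\Omega^1_D\otimes L^k\otimes K_D)$ the twisting bundle is ample, anti-ample, or trivial according to the sign of $k+1-\alpha$, and these are handled by Bott vanishing, its Serre dual, and Danilov vanishing on $D$, respectively. That variant needs only the standing hypothesis that $D$ is toric (plus $n\geq 4$), so it proves a slightly stronger statement than the proposition as phrased; your route is the one that matches the stated hypothesis ``$X$ toric'' and is the more natural reading of the paper's one-line indication. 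Your bookkeeping at the delicate points --- the exhaustiveness of the ample/trivial/anti-ample trichotomy via torsion-freeness of $\pic X$, and the dimension counts in the Kodaira and Bott steps --- is accurate.
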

\begin{proof}
Suppose $\alpha\leq 2$.  We have the exact sequence on $D$,
\[0\rightarrow\Theta_D \rightarrow\Theta_X \rightarrow\mathcal{N}_{X/D}\rightarrow 0.\]
Using $\mathcal{N}_{X/D} =\mathcal{O}([D]|_D)$ we have
\[ \cdots\rightarrow H^1(D,\Theta_D(-kD))\rightarrow H^1(D,\Theta_X(-kD)|D)\rightarrow H^1(D,\mathcal{O}((1-k)D))\rightarrow\cdots. \]
By Kodaira-Serre duality, $H^1(D,\Theta_D(-kD))\cong H^{n-2}(D,\Omega^1((k+1-\alpha)D))$ which is zero
for $k+1-\alpha>0$ by the Bott vanishing theorem (cf. p.130 of~\cite{Oda}).
We have $H^1(D,\mathcal{O}((1-k)D))=0$ by Kodiara vanishing and the negativity of $[(1-k)D]$, for
$k\geq 2$.

The proof for the case with $X$ toric is a similar application of the Bott vanishing theorem.  Note that we are not
assuming that $D\subset X$ is an invariant embedding.
\end{proof}

Note that one can make use of some theorems on the existence of smooth divisors (cf.~\cite{PS}).
Suppose $X$ is a Fano manifold with $\ind X=r$, so $\mathbf{K}^{-1}_X =r\mathbf{H}$.  Then if either $n=3$ or $r\geq n-1$,
the linear system $|H|$ contains a smooth irreducible divisor.

\subsection{The Approximating metric}

We will construct an approximate metric, which will be used in the proof of Theorem~\ref{thm:main},
in each K\"{a}hler class in $H^2_c(Y,\R)$.

We have a diffeomorphism $\psi$ of $V\subset X$ with a neighborhood of infinity $U$ of
$\mathbf{L}= N_D^{-1} =[D]|_{D}^{-1}$, where $\mathbf{L}^p=\mathbf{K}_D$, $p=\alpha-1$,
whose $\nu$-jet is holomorphic along $D$ for any large $\nu$.
Let $G$ be a K\"{a}hler potential away from the zero section given in (\ref{eq:potent}) of the Ricci-flat
metric from section~\ref{subsect:calabi}.  Define $g=\psi^*G$.
Then $\omega =i\partial\ol{\partial}g$ is a K\"{a}hler form in a neighborhood of $D$ on $Y=X\setminus D$.
By shrinking $V$ we may assume $\omega$ is positive definite on $V$.
Let $V_r$ be the subset of $V$ defined by $V_r =\{x\in V : g(x)>r \}$.  Let $0<a<b$ be such that
$V_a \Subset V$.  Define a smooth function $\lambda :\R\rightarrow\R$ so that $\lambda(x)=x$ for $x\geq b$,
$\lambda(x)=\frac{b-a}{2}$ for $x\leq a$, and in the interval $(a,b)$ $\lambda' >0$ and $\lambda''>0$.
Then $h=\lambda\circ g$ extends to a smooth function on $Y=X\setminus D$.  Simple calculation shows that
$i\partial\ol{\partial}h \geq 0$ on $Y$, and $i\partial\ol{\partial}h>0$ on $V_a$.

Thus $Y$ is a 1-convex space, meaning that $Y$ carries an exhaustion function $h:Y\rightarrow [0,\infty)$
which is strictly plurisubharmonic, i.e. $i\partial\ol{\partial}h>0$, outside a compact set.
A 1-convex space is holomorphically convex.  We consequently have the \emph{Remmert reduction}
$\pi:Y\rightarrow W$ where $W$ is a Stein space (cf.~\cite{Gra}), and $\pi$ has the following
properties.
\begin{thmlist}
\item  $\pi$ is proper, surjective, and with connected fibers.
\item  $\pi_* \mathcal{O}_Y =\mathcal{O}_W$.
\item  The map $\pi^* :\mathcal{O}_W(W)\rightarrow\mathcal{O}_Y(Y)$ is an isomorphism.
\item  The exceptional set
\begin{equation}
A=\{y\in Y : \dim_{\C} \pi^{-1}(\pi(y))>0\}
\end{equation}
is the maximal compact analytic set of $Y$.
\end{thmlist}

We will need the following vanishing results.
\begin{prop}\label{prop:vanish}
Suppose $Y$ is a 1-convex manifold with $\mathbf{K}_Y^p$ trivial for some $p\in\N$.  Then $H^j(Y,\mathcal{O}_Y)=0$
for $j\geq 1$.  If $B=\{h<c\}$ is strongly pseudoconvex, then $H^j(Y\setminus\ol{B},\mathcal{O}_Y)=0$ for $1\leq j\leq n-2$.
\end{prop}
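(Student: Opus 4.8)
The plan is to treat the two assertions separately, in each case reducing to a relative vanishing theorem of Grauert--Riemenschneider type via the Remmert reduction.

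For the first assertion I would begin with the Remmert reduction $\pi:Y\to W$ onto the Stein space $W$, whose properties are recorded above. Since $W$ is Stein, Cartan's Theorem~B gives $H^p(W,\mathcal{F})=0$ for $p\ge 1$ and every coherent sheaf $\mathcal{F}$, so the Leray spectral sequence for $\pi$ degenerates and yields $H^j(Y,\mathcal{O}_Y)\cong H^0(W,R^j\pi_*\mathcal{O}_Y)$ for all $j$. It therefore suffices to prove that the higher direct images $R^j\pi_*\mathcal{O}_Y$ vanish for $j\ge 1$; these are coherent sheaves supported on the compact exceptional set $A$. The key input is the Grauert--Riemenschneider vanishing theorem: since $\pi$ is a proper modification (an isomorphism off the nowhere-dense analytic set $\pi(A)$) and $Y$ is smooth, $R^j\pi_*\mathbf{K}_Y=0$ for $j\ge 1$.

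The main obstacle is that $\mathbf{K}_Y$ is only assumed $p$-torsion, not trivial, so Grauert--Riemenschneider controls $\mathbf{K}_Y$ rather than $\mathcal{O}_Y$. I would bridge this gap by passing to the finite \'etale cyclic cover $\tau:\tilde{Y}\to Y$ associated to the torsion class of $\mathbf{K}_Y$, of degree $m\mid p$, on which the pullback of $\mathbf{K}_Y$ becomes trivial. Then $\mathbf{K}_{\tilde{Y}}=\tau^*\mathbf{K}_Y$ is trivial and $\tilde{Y}$ is again $1$-convex, since $h\circ\tau$ is a strictly plurisubharmonic exhaustion at infinity and $\tau$ is finite. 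Applying the previous paragraph to $\tilde{Y}$ gives $R^j\tilde{\pi}_*\mathcal{O}_{\tilde{Y}}=R^j\tilde{\pi}_*\mathbf{K}_{\tilde{Y}}=0$, hence $H^j(\tilde{Y},\mathcal{O}_{\tilde{Y}})=0$ for $j\ge 1$. I would then descend using the eigensheaf decomposition $\tau_*\mathcal{O}_{\tilde{Y}}=\bigoplus_{i=0}^{m-1}\mathbf{K}_Y^{\otimes i}$ and the finiteness of $\tau$, which exhibit $H^j(Y,\mathcal{O}_Y)$ as the $i=0$ summand of $H^j(\tilde{Y},\mathcal{O}_{\tilde{Y}})=\bigoplus_i H^j(Y,\mathbf{K}_Y^{\otimes i})$, so it vanishes for $j\ge 1$. (Alternatively one could apply a twisted relative vanishing theorem directly to $\mathbf{K}_Y\otimes\mathbf{K}_Y^{-1}$, using that the torsion bundle $\mathbf{K}_Y^{-1}$ is $\pi$-nef; the cover argument avoids quoting the precise nef form.)

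For the second assertion, set $U=Y\setminus\ol{B}=\{h>c\}$. Because $B=\{h<c\}$ is strongly pseudoconvex, the level set $\{h=c\}$ is a strictly pseudoconvex hypersurface, so $U$ is strongly $1$-concave along its inner boundary while toward $D$ it carries the strictly plurisubharmonic exhaustion $h$. I would exploit both ends at once: choosing $c<c'$ with $\{h=c'\}$ also strictly pseudoconvex, write $Y=\{h<c'\}\cup U$ with overlap the shell $A'=\{c<h<c'\}$ and run Mayer--Vietoris in $\mathcal{O}_Y$-cohomology. The first assertion gives $H^j(Y,\mathcal{O}_Y)=0$ for $j\ge 1$, and applied to the strongly pseudoconvex (hence $1$-convex, with trivial power of $\mathbf{K}$) domain $\{h<c'\}$ it gives $H^j(\{h<c'\},\mathcal{O}_Y)=0$ for $j\ge 1$ as well; the sequence then yields $H^j(U,\mathcal{O}_Y)\cong H^j(A',\mathcal{O}_Y)$ for $j\ge 1$. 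The remaining step is the vanishing $H^j(A',\mathcal{O}_Y)=0$ for $1\le j\le n-2$, which is precisely the Andreotti--Grauert vanishing for the region between two strictly pseudoconvex hypersurfaces (the classical Hartogs phenomenon). I expect the concavity bookkeeping here to be the delicate point: one must verify that the index of pseudoconcavity is $1$, so the vanishing range is exactly $1\le j\le n-2$, and confirm that the pseudoconvex end toward $D$ contributes nothing in this range, which is guaranteed by the global vanishing of the first assertion.
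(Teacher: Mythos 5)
Your proof of the first assertion is correct and is essentially the paper's: relative Grauert--Riemenschneider vanishing through the Remmert reduction, plus a finite cover to reduce the torsion case to the trivial case. (The paper descends from the cover by averaging Dolbeault representatives over the deck group rather than by the eigensheaf decomposition of $\tau_*\mathcal{O}_{\tilde{Y}}$; both work.)

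The second assertion is where the proposal breaks down. Your Mayer--Vietoris step is fine and gives $H^j(Y\setminus\ol{B},\mathcal{O})\cong H^j(A',\mathcal{O})$ for $j\geq 1$, but the lemma you then invoke --- unconditional ``Andreotti--Grauert vanishing'' in degrees $1\leq j\leq n-2$ for a shell between two strictly pseudoconvex hypersurfaces --- is not a theorem. On a concave end Andreotti--Grauert theory yields \emph{finiteness}, not vanishing; whether the shell cohomology actually vanishes depends on what fills the hole. Concretely, let $A$ be an abelian surface, $\mathbf{L}$ an ample line bundle on $A$, and let $Y$ be the total space of $\mathbf{L}^{-1}$ with $h$ the fiberwise norm squared of a negatively curved metric, so $n=3$ and $h$ is strictly plurisubharmonic off the zero section. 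The shell $A'=\{c<h<c'\}$ lies between two strictly pseudoconvex hypersurfaces, yet $H^1(A',\mathcal{O})\neq 0$: for $0\neq[\alpha]\in H^1(A,\mathcal{O}_A)\cong\C^2$, if $p^*\alpha=\ol{\partial}f$ on $A'$, then averaging $f$ over the $S^1$-action produces an invariant, fiberwise holomorphic, hence fiberwise constant primitive, which descends to a primitive of $\alpha$ on $A$, a contradiction. This example does not contradict the proposition itself (its hypothesis on $\mathbf{K}_Y$ fails, and indeed the first assertion fails for it too), but it refutes the shell lemma exactly as you state it; and since your argument for the second assertion uses the hypothesis on $\mathbf{K}_Y$ only through the first assertion, it cannot close the gap --- the reduction lands on a claim that is just as strong as the one to be proved and is false without further input about the hole.

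The missing input is the actual content of the paper's proof: control of the singularities being filled in. The paper works on the Remmert reduction $W$ (Stein, with discrete singular set), proves its singularities are \emph{rational} via Burns' $L^2$-criterion applied to the nowhere vanishing holomorphic $n$-form furnished by the trivialized (power of the) canonical bundle --- this is precisely where the hypothesis enters the second assertion --- hence Cohen--Macaulay, and then applies the depth-based local cohomology vanishing $H^k_{\ol{B}}(W,\mathcal{O}_W)=0$ for $k\leq n-1$ of B\u{a}nic\u{a}--St\u{a}n\u{a}\c{s}il\u{a}, together with the exact sequence of cohomology with supports and the Steinness of $W$. Note also that no such depth argument can be run on $Y$ or on $\{h<c'\}$ directly: there $\ol{B}$ contains the positive-dimensional exceptional set and therefore admits no Stein neighborhoods. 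Passage to $W$ is forced, and with it the need for the rationality/Cohen--Macaulay input that your proposal never supplies.
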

\begin{proof}
Suppose first that $\mathbf{K}_Y$ is trivial.  By the Grauert-Riemenschneider vanishing theorem~\cite{GraRie}
\begin{equation}
H^j(Y,\mathcal{O}_Y)= H^j(Y,\mathcal{O}(\mathbf{K}_Y))=0,\quad\text{for }j\geq 1.
\end{equation}

We have a nowhere vanishing section $\sigma\in\Gamma(\mathbf{K}_Y)$.  Since the singularity set of $W$ has codimension $n\geq 2$,
the dualizing sheaf satisfies $\omega_W =i_*\mathcal{O}(\mathbf{K}_Y)$, where $i: U\rightarrow W$ is the inclusion of the non-singular
set, and $\sigma\in\Gamma(\omega_W)$.  The singularity set of $W$ is discrete, and any singular point $p\in W$ has a
relatively compact neighborhood $V$ with $V\setminus\{p\}$ smooth.  Recall that a point $p\in W$ is a \emph{rational singularity}
if $(R^i \pi_* \mathcal{O}_Y)_x =0$ for $i>0$ where $\pi:Y\rightarrow W$ is any resolution of singularities.
It is easy to see that
\begin{equation}\label{eq:rat}
\int_V \sigma\wedge\ol{\sigma}<\infty.
\end{equation}
It is a result of~\cite{Bur} that an isolated singularity for which there is a non-vanishing holomorphic n-form on a deleted
neighborhood is rational if and only if (\ref{eq:rat}) holds.  Thus $W$ contains only rational singularities.

If $c>0$ is chosen large enough that $B=\{h<c\}$ is strongly pseudoconvex and $h$ is strictly plurisubharmonic on $Y\setminus B$,
then the exceptional set $A\subset B$.  Thus $\pi(B)$ is a strongly pseudoconvex neighborhood in $W$, which we also denote by $B$.
It is well known that rational singularities are Cohen-Macauley~\cite{AndAle}.
The vanishing theorem in~\cite[I.\S 3.\ Theorem 3.1]{BanSta}
implies that $H_{\ol{B}}^k(W,\mathcal{O}_W)=0$ for $k\leq n-1$, where $H^k_{\ol{B}}$ denotes cohomology with supports in $\ol{B}$.
Recall the exact sequence with cohomology with supports
\[\rightarrow H^{k-1}(W,\mathcal{O})\rightarrow H^{k-1}(W\setminus\ol{B},\mathcal{O})\rightarrow H^k_{\ol{B}}(W,\mathcal{O})
\rightarrow H^k(W,\mathcal{O})\rightarrow.\]
Since $W$ is Stein, we have
\[ H^j(Y\setminus\ol{B},\mathcal{O})=H^j(W\setminus\ol{B},\mathcal{O})=H^{j+1}_{\ol{B}}(W,\mathcal{O})=0,\ \text{for }1\leq j\leq n-2.\]

Now suppose that merely $\mathbf{K}_Y^q$ is trivial for some $q\in\N$.  Then there is a finite cover $\varpi :\tilde{Y}\rightarrow Y$ with
$\mathbf{K}_{\tilde{Y}}$ trivial and $\varpi^* h$ is strictly plurisubharmonic outside a compact subset of $\tilde{Y}$.
Clearly, the above proof works on $\tilde{Y}$.  Suppose $\beta\in\Omega^{0,j}(Y\setminus\ol{B})$ satisfies $\ol{\partial}\beta=0$
for $1\leq j\leq n-2$.  Then $\varpi^*\beta =\ol{\partial}\gamma$ for some $\gamma\in\Omega^{0,j-1}(\tilde{Y}\setminus\ol{B})$.
Let $G$ be the group of deck transformations of $\varphi$, and define $\ol{\gamma}=\frac{1}{|G|}\sum_{g\in G} g^*\gamma$.
Then it is easy to check that $\ol{\gamma}=\varpi^* \theta$ for some $\theta\in\Omega^{0,j-1}(Y\setminus\ol{B})$, and
$\ol{\partial}\theta =\beta$.  Thus $H^j(Y\setminus\ol{B},\mathcal{O})=0$ for $1\leq j\leq n-2$ in this case as well, and
the exact argument shows $H^j(Y,\mathcal{O})=0$ for $j\geq 1$ as well.
\end{proof}

\begin{lem}\label{lem:ddbar}
The $\partial\ol{\partial}$-lemma holds on $Y$, and for $n>2$ on $Y\setminus\ol{B}$ where $B$ is as in Proposition~\ref{prop:vanish}.
That is if $\beta$ is an exact real $(1,1)$-form on $Y$, then $\beta =i\partial\ol{\partial}f$ for $f\in C^\infty (Y)$.
And the analogous result holds on $Y\setminus\ol{B}$ for $n>2$.
\end{lem}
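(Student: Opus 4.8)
The plan is to reduce the $\partial\ol{\partial}$-lemma to the Dolbeault vanishing $H^1(Y,\mathcal{O}_Y)=0$ proved in Proposition~\ref{prop:vanish}, via the classical bidegree argument. Let $\beta$ be an exact real $(1,1)$-form, so that $\beta=d\alpha$ for some smooth $1$-form $\alpha$; replacing $\alpha$ by $\frac{1}{2}(\alpha+\ol{\alpha})$ I may assume $\alpha$ is real, and write $\alpha=\alpha^{1,0}+\alpha^{0,1}$ with $\alpha^{1,0}=\ol{\alpha^{0,1}}$.

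Next I would decompose $d\alpha=\beta$ by type. Since $\beta$ is pure $(1,1)$, the $(2,0)$ and $(0,2)$ components of $d\alpha$ vanish, giving $\partial\alpha^{1,0}=0$ and $\ol{\partial}\alpha^{0,1}=0$, while the $(1,1)$ component reads $\beta=\ol{\partial}\alpha^{1,0}+\partial\alpha^{0,1}$. In particular $\alpha^{0,1}$ is $\ol{\partial}$-closed, so it determines a class in the Dolbeault group $H^{0,1}_{\ol{\partial}}(Y)\cong H^1(Y,\mathcal{O}_Y)$, which vanishes by Proposition~\ref{prop:vanish}. Hence $\alpha^{0,1}=\ol{\partial}u$ for a function $u$, smooth by elliptic regularity for $\ol{\partial}$, and by the reality of $\alpha$ we get $\alpha^{1,0}=\ol{\alpha^{0,1}}=\partial\ol{u}$.

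Substituting back yields $\beta=\ol{\partial}\partial\ol{u}+\partial\ol{\partial}u=\partial\ol{\partial}(u-\ol{u})=i\partial\ol{\partial}f$ with $f=-i(u-\ol{u})$, which is real (since $\ol{f}=i(\ol{u}-u)=f$) and smooth, as required. For the statement on $Y\setminus\ol{B}$ the identical argument applies verbatim, now invoking the vanishing $H^1(Y\setminus\ol{B},\mathcal{O}_Y)=0$ from Proposition~\ref{prop:vanish}; this holds exactly in the range $1\le 1\le n-2$, i.e. for $n>2$, which accounts for the dimension restriction in the statement.

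Once the cohomological input is in place the derivation is purely formal. The only points that require care are the Dolbeault isomorphism and elliptic regularity for $\ol{\partial}$ on the open, non-compact manifolds $Y$ and $Y\setminus\ol{B}$ — both standard for complex manifolds — together with the bookkeeping of the reality condition needed to land on a \emph{real} potential $f$. The genuine difficulty has already been absorbed into Proposition~\ref{prop:vanish}, so I do not expect any further analytic obstacle here.
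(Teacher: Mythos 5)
Your proof is correct and follows essentially the same route as the paper: split a real primitive $\alpha$ of $\beta$ into types, use $H^1(Y,\mathcal{O})=0$ (resp.\ $H^1(Y\setminus\ol{B},\mathcal{O})=0$ for $n>2$) from Proposition~\ref{prop:vanish} to solve $\alpha^{0,1}=\ol{\partial}u$, and conclude $\beta=i\partial\ol{\partial}(2\im u)$. The only difference is that you spell out the type-decomposition bookkeeping and the reality check that the paper leaves as ``one easily checks.''
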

\begin{proof}
There exists an $\alpha\in\Omega^1$ with $d\alpha =\beta$.  Splitting into types we have $\alpha=\alpha^{1,0}+\alpha^{0,1}$
with $\alpha^{0,1}=\ol{\alpha}^{1,0}$, and $\ol{\partial}\alpha^{0,1}=0$.  Since $H^1(Y,\mathcal{O})=H^1(Y\setminus\ol{B},\mathcal{O})=0$,
there exists a $\gamma\in C^\infty$ with $\ol{\partial}\gamma=\alpha^{0,1}$.  Then one easily checks that
$\beta=i\partial\ol{\partial}2\im\gamma$.
\end{proof}

Suppose $\omega$ is a K\"{a}hler form on $Y$ with $[\omega]\in H^2_c(Y,\R)$.
Since $B_b :=\{x\in Y: h(x)<b\}$ is strongly pseudoconvex, we have $\omega|_{Y\setminus\ol{B}} =i\partial\ol{\partial}f$ with
$f\in C^\infty(Y\setminus\ol{B})$.
We define a cut-off function $\eta:Y\rightarrow[0,1]$ as follows.  Choose $c,d$ so that $b<c<d$ and define $\eta(x)=1$ for $x\leq c$,
$\eta(x)=0$ for $x\geq d$, and define $\eta$ to be decreasing with values in $(0,1)$ on $(c,d)$.
We define
\begin{equation}
\omega_0 =
\begin{cases}
\omega +Ci\partial\ol{\partial}h,  & \text{ on } B \\
i\partial\ol{\partial}\bigl((\eta\circ h) f\bigr) +Ci\partial\ol{\partial}h, & \text{ on } Y\setminus B. \\
\end{cases}
\end{equation}
For $C>0$ sufficiently large $\omega_0$ is a K\"{a}hler form, and clearly $[\omega_0]=[\omega]$.

\subsection{Monge-Amp\`{e}re equation}

Let $\sigma\in\Gamma(\mathbf{K}_X)$ be a section with a pole along $D$, of order $\alpha$.
Thus $\sigma\wedge\ol{\sigma}$ has a pole of order $2\alpha$.  Also, $\omega_0 ^n$ has a pole of order
$2\alpha$ along $D$. Thus
\begin{equation}
f=\log\left(\frac{\sigma\wedge\ol{\sigma}}{\omega_0^n}\right)
\end{equation}
extends to a smooth function on $X$.  We have $i\partial\ol{\partial}f=\ric(\omega_0)$ which is zero along $D$.
Thus $f$ is constant on $D$, and we may assume $f$ vanishes on $D$.
Furthermore, $\partial f|D\in H^0(D,\mathcal{O}(N^*))$.  And since $N^*$ is negative, $\partial f$ vanishes
along $D$.  Using the negativity of of $N^{-k},k\geq 1$, and that $i\partial\ol{\partial}f=\ric(\omega_0)$ vanishes
to order $\nu-4$ on $D$, one can show that the derivatives of $f$ up to order $\nu-2$ vanish along $D$.

Then we have the following properties of the approximating metric $\omega_0$.
\begin{prop}\label{prop:approx-metric}
 The form $\omega_0$ defines a complete K\"{a}hler metric $g_0$ on $Y$ such that
 \[ \ric(\omega_0) =i\partial\ol{\partial}f, \]
where $f$ is a smooth function on $X$ vanishing along $D$, and whose derivatives up to order $\nu-2$ vanish along $D$.
Furthermore, the curvature tensor satisfies $\|\nabla^k R(g_0)\|_{g_0} =O(\rho^{-2-k})$, where $\rho$ is the distance from
a fixed point.
\end{prop}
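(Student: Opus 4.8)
The statement of Proposition~\ref{prop:approx-metric} collects three separate assertions about the glued metric $\omega_0$: completeness, the Ricci formula $\ric(\omega_0)=i\partial\ol\partial f$ with $f$ vanishing to high order along $D$, and the curvature decay $\|\nabla^k R(g_0)\|=O(\rho^{-2-k})$. The plan is to establish each in turn, exploiting that $\omega_0$ agrees with the Calabi-ansatz model metric $\ol\omega$ of Proposition~\ref{prop:approx} outside a compact set, where all three properties are already known, so that only the \emph{transition region} requires genuine work.

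\emph{Completeness and curvature decay.} On $Y\setminus\ol B$ one has $\omega_0=i\partial\ol\partial\bigl((\eta\circ h)f\bigr)+Ci\partial\ol\partial h$, and since $\eta\circ h$ vanishes identically for $h\geq d$, the form $\omega_0$ coincides with $C i\partial\ol\partial h=C\ol\omega$ near $D$ (recall $h=\lambda\circ g$ and $g=\psi^*G$, with $G$ the K\"ahler potential of the Calabi ansatz from~(\ref{eq:potent})). First I would invoke Proposition~\ref{prop:approx} directly: the model metric $\ol\omega$ is complete at infinity with Euclidean volume growth and satisfies $\|\nabla^k R\|=O(\rho^{-2-k})$. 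Since $\omega_0$ equals a constant multiple of $\ol\omega$ outside a compact set, both the metric completeness toward $D$ and the curvature estimates transfer immediately; on the remaining compact part $\overline{B}$ completeness is automatic and curvature is bounded. The decay rate $\rho^{-2-k}$ is exactly what follows from~(\ref{eq:curvature}) together with the explicit profile~(\ref{eq:prof4}), as promised in the discussion after Proposition~\ref{prop:approx}.

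\emph{The Ricci potential $f$.} Because $\varpi^*\Omega'=\Omega$ is a nowhere-vanishing holomorphic $(m+1)$-form on $\mathbf L^\times$ with a pole of order $\alpha$ along $D$, and by the hypothesis $\alpha[D]=-K_X$, the pull-back $\sigma\in\Gamma(\mathbf K_X)$ has a pole of order exactly $\alpha$, so $\sigma\wedge\ol\sigma$ and $\omega_0^n$ both blow up like $\rho^{-2\alpha}$ and their ratio defines a smooth $f$ on $X$ with $i\partial\ol\partial f=\ric(\omega_0)$. I would then argue the vanishing of $f$ and its derivatives along $D$ by the jet-comparison already set up in the excerpt: by~(\ref{cond}) the diffeomorphism $\psi$ is holomorphic to order $\nu$ along $D$, and on the model $\mathbf L^\times$ one has $\ol\omega^{m+1}=\ol c\,\Omega\wedge\ol\Omega$ exactly by~(\ref{eq:vol-Calab-Sasak}), i.e. the model is genuinely Ricci-flat. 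Hence $f$ measures precisely the failure of $\psi$ to be holomorphic, so $\ric(\omega_0)=i\partial\ol\partial f$ vanishes to order $\nu-4$ along $D$; then $f$ constant on $D$ (normalized to zero), $\partial f|_D\in H^0(D,\mathcal O(N^*))=0$ by negativity of $N^*$, and an inductive bootstrap using negativity of $N^{-k}$ pushes the vanishing of the full jet of $f$ up to order $\nu-2$, exactly as sketched just before the proposition.

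\emph{Main obstacle.} The delicate point is not any of the three properties in the model region but the behavior in the \emph{gluing annulus} $\{c<h<d\}$, where $\omega_0$ is built from $i\partial\ol\partial\bigl((\eta\circ h)f\bigr)$ and one must verify that the cut-off construction neither destroys positivity (handled by taking $C$ large, as already noted) nor spoils the curvature decay. Since this region is relatively compact the decay rate is vacuously fine there, but one must check uniform bounds so that the global estimate $\|\nabla^k R\|=O(\rho^{-2-k})$ holds with a single constant; the honest work is confirming that the $C^k$-size of the correction term $(\eta\circ h)f$ is controlled by the high-order vanishing of $f$ along $D$, which is exactly where the approximation order $\nu$ must be taken large. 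I expect this bookkeeping — translating the jet-vanishing order $\nu$ into explicit decay constants matching those of the Calabi model — to be the one genuinely technical step, everything else reducing to the cited Propositions~\ref{prop:approx} and~\ref{prop:vanish}.
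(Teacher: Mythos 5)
There is a genuine gap in your treatment of completeness and curvature decay. You assert that near $D$ one has $\omega_0 = Ci\partial\ol{\partial}h = C\ol{\omega}$, so that the estimates of Proposition~\ref{prop:approx} ``transfer immediately.'' This equality is false. Near infinity $\omega_0 = Ci\partial\ol{\partial}(\psi^*G)$, where $\partial\ol{\partial}$ is taken in the complex structure of $Y\subset X$, and $\psi$ is only a diffeomorphism whose jet along $D$ is holomorphic to order $\nu$; since $\psi$ is not a biholomorphism, $i\partial\ol{\partial}(\psi^*G)\neq\psi^*\bigl(i\partial\ol{\partial}G\bigr)=\psi^*\ol{\omega}$. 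Indeed, if the equality did hold, $\omega_0$ would be exactly Ricci-flat near $D$, the Ricci potential $f$ would vanish identically there, and condition (\ref{cond}) --- whose entire purpose is to make $\psi$ holomorphic to high but \emph{finite} order along $D$ --- would be superfluous; your own paragraph on the Ricci potential, where $f$ ``measures precisely the failure of $\psi$ to be holomorphic,'' contradicts the equality you use two paragraphs earlier. What is actually needed, and what the paper supplies in the proof of Proposition~\ref{prop:bound-geo}, is the comparison estimate (\ref{eq:approx-bound}), $\|\nabla^j(g-\ol{g})\|=O(\rho^{-\alpha-2-j})$, obtained by converting the order-$\nu$ vanishing of the non-holomorphy of $\psi$ along $D$ into polynomial decay measured in the model metric, via the bounds (\ref{eq:potent-expr}), (\ref{eq:potent-bound}) and (\ref{eq:dist-bound}) comparing the two radial functions. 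Only with this estimate does $\|\nabla^k R(g_0)\|_{g_0}=O(\rho^{-2-k})$ follow from the model's decay. Completeness survives your argument (uniform equivalence of $\omega_0$ and $\psi^*\ol{\omega}$ near infinity suffices), but the curvature statement as you argue it does not.

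Relatedly, your ``main obstacle'' paragraph misplaces the difficulty and conflates two different functions called $f$: the function cut off in $(\eta\circ h)f$ is the potential of the compactly supported K\"{a}hler class furnished by Lemma~\ref{lem:ddbar}, not the Ricci potential of the proposition, and since $(\eta\circ h)f$ is supported in the compact set $\{h<d\}$ it affects neither the asymptotics nor the constants in the decay estimate. The genuinely technical step is the one identified above --- translating jet-order vanishing along $D$ into decay rates with respect to the Calabi-ansatz distance --- not uniformity over the gluing annulus. On the positive side, your second paragraph (smoothness of $f$ on $X$ from the matching pole orders $2\alpha$, constancy of $f|_D$, $\partial f|_D\in H^0(D,\mathcal{O}(N^*))=0$ by negativity, and the inductive use of negativity of $N^{-k}$ to reach order $\nu-2$) does reproduce the paper's argument for the Ricci-potential part correctly.
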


The following theorem is due to G. Tian and S.-T. Yau.  The final statement on the curvature decay follows from
~\cite{BKN}.
\begin{prop}[\cite{TY2}]\label{prop:TY}
Let $\omega_0$ be the K\"{a}hler metric on $Y=X\setminus D$ constructed above.  And let $f$ be as above
with $\ric(\omega_0)=i\partial\ol{\partial}f$.  Then the Monge-Amp\`{e}re equation
\begin{equation}\label{eq:monge}
\left(\omega_0 +i\partial\ol{\partial}\phi\right)^n =e^f\omega_0^n ,
\end{equation}
has a smooth solution $\phi\in C^\infty(Y)$ where $\phi$ converges uniformly to zero at infinity,
is bounded in $C^{2,\frac{1}{2}}$, and thus $\omega=\omega_0 +i\partial\ol{\partial}\phi$ satisfies
$c^{-1}\omega_0\leq\omega\leq c\omega_0$, for some $c>0$.

Thus $\omega$ is the K\"{a}hler form of a complete Ricci-flat K\"{a}hler metric $g$ on $Y$.
Furthermore, $g$ has Euclidean volume growth, and $\|R_g\|_g =O(\rho^{-2})$ where
$\rho(x)=\dist(o,x)$.  If $\|R_g\|_g =O(\rho^{-k})$ for $k>2$, then $(Y,g)$ is K\"{a}hler ALE
of order $2n$.  In which case $\|R_g\|_g =O(\rho^{-2n-2})$.
\end{prop}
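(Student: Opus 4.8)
The plan is to solve~(\ref{eq:monge}) by the continuity method and then read off the geometric conclusions from the uniform equivalence of metrics that results. Following~\cite{TY2}, I would introduce the one-parameter family
\[
\left(\omega_0 + i\partial\ol{\partial}\phi_t\right)^n = e^{tf}\omega_0^n, \qquad t\in[0,1],
\]
with $\phi_0 = 0$, and show that the set of $t\in[0,1]$ admitting a solution in a suitable weighted H\"older space $C^{k,\beta}_\delta$ of functions decaying at infinity is nonempty, open, and closed. Openness is the standard implicit function theorem argument: the linearization of the Monge--Amp\`ere operator at $\phi_t$ is (a constant times) the Laplacian $\Delta_{\omega_t}$ of the metric $\omega_t = \omega_0 + i\partial\ol{\partial}\phi_t$, and its invertibility between weighted spaces for non-exceptional weights $\delta$ follows from completeness together with the Euclidean volume growth and the attendant Sobolev inequality guaranteed by Proposition~\ref{prop:approx}. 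Because $Y$ is noncompact, the real content is closedness, i.e. the a priori estimates.

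I would establish, in order, (i) a uniform $C^0$ bound with decay at infinity, (ii) a $C^2$ bound, and (iii) a $C^{2,1/2}$ bound. For (i) the essential input is that $f$, hence $e^{tf}-1$, decays rapidly: by Proposition~\ref{prop:approx-metric} the derivatives of $f$ vanish to order $\nu-2$ along $D$, with $\nu$ arbitrarily large. Exploiting this decay one runs a Moser iteration together with a weighted maximum-principle argument on the complete manifold to bound $\|\phi_t\|_{C^0}$ and to force $\phi_t\to 0$ at infinity, the Euclidean volume growth and the Sobolev inequality playing the role of compactness. For (ii) I would use Yau's second-order inequality, which controls $\Delta_{\omega_0}\log\operatorname{tr}_{\omega_0}\omega_t$ in terms of the bisectional curvature of $\omega_0$; the bounded geometry of $\omega_0$, namely $\|\nabla^k R(g_0)\|_{g_0}=O(\rho^{-2-k})$ from Proposition~\ref{prop:approx}, makes this term manageable, and combined with the $C^0$ bound it yields $c^{-1}\omega_0\le\omega_t\le c\omega_0$ uniformly in $t$. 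Step (iii) is Evans--Krylov followed by Schauder, with higher regularity obtained by bootstrapping, so the limit $\phi=\phi_1$ is smooth. The hard part is the $C^0$ estimate: unlike the compact case one cannot integrate the equation against constants, and one must use the decay of $f$ and the volume growth delicately to bound $\phi$ and simultaneously control its behaviour at infinity.

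Given the solution, Ricci-flatness is immediate. By construction $e^f\omega_0^n = c'\,\sigma\wedge\ol{\sigma}$ for the holomorphic section $\sigma$ of $\mathbf{K}_X$, so $\omega^n$ is a constant multiple of $\sigma\wedge\ol{\sigma}$; writing $\sigma$ in a local holomorphic frame shows $\log(\omega^n/\text{(Lebesgue)})$ is pluriharmonic, whence $\ric(\omega)=-i\partial\ol{\partial}\log(\omega^n/\text{(Lebesgue)})=0$. Completeness of $g$ follows from $c^{-1}\omega_0\le\omega\le c\omega_0$ and the completeness of $g_0$, and the Euclidean volume growth follows from $\omega^n=e^f\omega_0^n$ with $e^f$ bounded above and below, using the growth of $\omega_0$. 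The decay $\|R_g\|_g=O(\rho^{-2})$ I would obtain by combining the $O(\rho^{-2})$ curvature decay of the Calabi ansatz metric $\ol{\omega}$ from Proposition~\ref{prop:approx} with the derivative estimates for $\phi$, which show $\omega-\ol{\omega}$ and its derivatives decay; this is also the source of the asymptotic statement~(\ref{eq:asymp}) in Theorem~\ref{thm:main}.

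Finally, the ALE conclusion is imported rather than proved by hand. Once $g$ is known to be Ricci-flat with Euclidean volume growth and $\|R_g\|_g=O(\rho^{-k})$ for some $k>2$, the structure theory of Bando--Kasue--Nakajima~\cite{BKN} applies and shows that $(Y,g)$ is K\"ahler ALE of order $2n$. The sharpened decay $\|R_g\|_g=O(\rho^{-2n-2})$ then comes from the elliptic analysis on the ALE end, where harmonic coordinates together with the Ricci-flat equation force the additional curvature decay, exactly as in the discussion following Theorem~\ref{thm:C-Y}.
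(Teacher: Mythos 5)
The paper does not prove this proposition at all: it is imported verbatim from Tian--Yau \cite{TY2}, with the final statement (ALE of order $2n$ once $\|R_g\|_g=O(\rho^{-k})$, $k>2$, and the improved decay $O(\rho^{-2n-2})$) attributed to Bando--Kasue--Nakajima \cite{BKN}. So there is no internal argument to compare yours against; the relevant comparison is with \cite{TY2} itself. Judged on that basis, your outline follows the standard strategy and is sound in its main lines: solve the Monge--Amp\`ere equation by a continuity/limiting argument, with openness from invertibility of the linearized operator (the Laplacian of the evolving metric), closedness from a priori estimates in which the $C^0$ bound is the crux and is driven by the rapid decay of $f$ (Proposition~\ref{prop:approx-metric}) together with the Sobolev inequality coming from Euclidean volume growth; then Yau's second-order estimate gives $c^{-1}\omega_0\leq\omega\leq c\omega_0$, Evans--Krylov/Schauder and bootstrapping give smoothness, and Ricci-flatness is immediate from $\omega^n=\sigma\wedge\ol{\sigma}$ with $\sigma$ holomorphic and nonvanishing on $Y$. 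Your handing off of the ALE statement to \cite{BKN} is exactly what the paper does.

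Two caveats. First, your derivation of $\|R_g\|_g=O(\rho^{-2})$ is under-justified as written: curvature involves four derivatives of $\phi$, and decay of those derivatives is not part of the solution package you constructed (uniform $C^{2,\frac12}$ control plus $\phi\to 0$ at infinity gives no pointwise curvature decay). In the paper this decay is precisely the content of the \emph{later} Propositions~\ref{prop:bound-geo} and~\ref{prop:decay}, i.e.\ scaled interior Schauder estimates on bounded-geometry charts, and in \cite{TY2} it is a separate scaled derivative-estimate argument; you need to invoke one of these rather than fold it into the existence step. Second, running the continuity method in a weighted H\"older space $C^{k,\beta}_\delta$ presupposes a Fredholm/isomorphism theory for the Laplacian on weighted spaces, which needs the asymptotic structure of the end (non-exceptional weights for an asymptotically conical or Calabi-ansatz end), not merely completeness, volume growth, and a Sobolev inequality; alternatively one can solve Dirichlet problems on an exhaustion as in Cheng--Yau \cite{CheY} and pass to a limit, which avoids weighted mapping theory but yields only the weaker conclusion stated in the proposition ($\phi\to 0$ uniformly, $C^{2,\frac12}$ bounded). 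Either route works, but the justification you give for openness does not suffice by itself.
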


By ALE of order $m$ we mean the following.  There exists a compact subset $K\subset Y$, a finite group
$\Gamma\subset O(2n)$ acting freely on $\R^{2n}\setminus\{0\}$, and a ball $B_R(0)\subset\R^{2n}$ of radius $R>0$.
So that there is a diffeomorphism $\chi:(\R^{2n}\setminus B_R(0))/\Gamma \rightarrow Y\setminus K$ and
\begin{equation}
\| \nabla^k \chi^*g -\nabla^k h\|_{h} =O(r^{-m-k}),
\end{equation}
where $h$ is the flat metric and $\nabla$ its covariant derivative.

We say that $Y$ is K\"{a}hler ALE if in addition we have $\R^{2n}=\C^n$ with the standard
complex structure $J_0$ and $\Gamma\subset U(n)$.  And if $J$ is the complex structure on $Y$ we have
\begin{equation}\label{eq:ALE-Kahler}
\| \nabla^k \chi^*J -\nabla^k J_0 \|_{h} =O(r^{-m-k}),
\end{equation}
and Ricci-flatness implies that $\Gamma\subset SU(n)$.  It is known that if an ALE manifold $Y$ is K\"{a}hler one
can choose coordinates at infinity $\chi:(\C^n \setminus B_R(0))/\Gamma \rightarrow Y\setminus K$ so that
(\ref{eq:ALE-Kahler}) holds.

\subsection{asymptotic properties of the metric}
We want to improve on the asymptotic behavior of $\phi$ in Proposition~\ref{prop:TY}.
First we need coordinates on $Y$ for which the metric has good bounds.   Then we will apply Schauder estimates
to the solution to Proposition~\ref{prop:TY}.  In the following $\rho$ will denote the distance from a fixed point
$o \in Y$ outside a compact set $K\subset Y$ containing $o\in Y$ and extended to a continuous function on all of $Y$.
\begin{defn}\label{defn:bound-geo}
A holomorphic coordinate chart $(U,z_1,\ldots, z_n)$ centered at $x\in Y$ in a K\"{a}hler manifold
is \emph{bounded of order $\ell$} with bound $(R,c, c_1,\ldots,c_{\ell})$ if
\begin{thmlist}
 \item  using the Euclidean coordinate distance the ball $B_R(x)\subseteq U$,
 \item  $\frac{1}{c}\delta_{ij}\leq g_{i\ol{j}}\leq c\delta_{ij}$ if $g_{i\ol{j}}$ denotes the metric tensor in $(U,z_1,\ldots, z_n)$,
 \item  and for any multi-indices $\alpha,\beta$ with $|\alpha|+|\beta|\leq\ell$
\[ \left| \frac{\partial^{|\alpha|+|\beta|} g_{i\ol{j}}}{\partial z^{\alpha}\partial\ol{z}^{\ol{\beta}}}\right|\leq c_{|\alpha|+|\beta|}.\]
\end{thmlist}

We say that $(Y,g)$ has \emph{bounded geometry of order $\ell$} if there are constants $(R,c, c_1,\ldots,c_{\ell})$
so that each $x_0 \in Y$ has a coordinate neighborhood $(U,z_1,\ldots, z_n)$ bounded of order $\ell$ with bound
$(R,c, c_1,\ldots,c_{\ell})$ for $\rho(x_0)^{-2} g$.
\end{defn}

\begin{prop}\label{prop:bound-geo}
 Let $(Y,\omega_0 )$ be the K\"{a}hler metric in Proposition~\ref{prop:approx-metric}.  Then for any
 positive integer $\ell$, $(Y,\omega_0 )$ has bounded geometry of order $\ell$ if $\nu$ in
 Proposition~\ref{prop:approx-metric} is chosen sufficiently large.
\end{prop}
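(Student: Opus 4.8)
The plan is to reduce everything to the behaviour at infinity and there to exploit the cone structure of the Calabi ansatz. On any sublevel set $\{h\le b\}$ the function $h$ is a proper exhaustion, so this set is compact, $\omega_0$ is a smooth metric on it, and $\rho$ is bounded above and below there; hence $(Y,\omega_0)$ trivially has bounded geometry of order $\ell$ over this region for suitable constants. It therefore suffices to produce a uniform family of bounded holomorphic charts in a neighbourhood of infinity, where by the construction of $\omega_0$ we have $\omega_0=C\,i\partial\ol{\partial}(\psi^* G)$ with $G$ the Calabi potential (\ref{eq:potent}) and $\psi$ the diffeomorphism whose $\nu$-jet is holomorphic along $D$.

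First I would analyse the model. On $\mathbf{L}^\times=C(S)$ the genuine Calabi ansatz metric $\omega_\phi$ of (\ref{eq:cal-ansatz}) with profile (\ref{eq:prof3}) is, for $a=0$, exactly the Ricci-flat cone metric $g_{\mathrm{cone}}$ over the Sasaki-Einstein manifold $S$, and in general differs from it by a term decaying in $\rho$. The cone $C(S)$ carries the scaling action $\Lambda_\lambda$ generated by the Euler field $r\frac{\partial}{\partial r}$, under which $\Lambda_\lambda^* g_{\mathrm{cone}}=\lambda^2 g_{\mathrm{cone}}$. Fixing a finite holomorphic atlas of the compact slice $\{r=1\}\cong S$ with uniform bounds and transporting it by $\Lambda_{\rho_0}$ to the slice at scale $\rho_0$ through a point $x_0$ with $\rho(x_0)=\rho_0$, one gets charts in which $\Lambda_{\rho_0}^*(\rho_0^{-2} g_{\mathrm{cone}})=g_{\mathrm{cone}}$; because $S$ is compact these inherit bounds $(R,c,c_1,\dots,c_\ell)$ independent of $\rho_0$. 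Since $(1+\tau)\sim\rho^2$ and, from the explicit profile, $\rho_0^{-2}(\omega_\phi-g_{\mathrm{cone}})$ together with its derivatives up to order $\ell$ tends to $0$ as $\rho_0\to\infty$ in these charts (consistently with the decay $\|\nabla^k R\|=O(\rho^{-2-k})$ of Proposition~\ref{prop:approx}), the metric $\omega_\phi$ itself has bounded geometry of order $\ell$.

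Next I would transfer these charts to $Y$ through $\psi$. Because the $\nu$-jet of $\psi$ is holomorphic along $D$, the tensor $i\partial\ol{\partial}(\psi^* G)-\psi^*(i\partial\ol{\partial}G)=i\partial\ol{\partial}(\psi^* G)-\psi^*\omega_\phi$ vanishes to order $\sim\nu$ along $D$; translating orders of vanishing along $D$ into decay in $\rho$ by means of $\rho\sim r^{\kappa/(2(m+1))}$ shows that this discrepancy, and its covariant derivatives up to order $\ell$, are $O(\rho^{-N(\nu)})$ with $N(\nu)\to\infty$. Composing the scaling charts with $\psi$ (which, being $C^\ell$-close to a biholomorphism to high order, still furnishes admissible holomorphic coordinates after a uniformly small correction) therefore yields charts in which $\rho_0^{-2}\omega_0$ differs from the model $\rho_0^{-2}\,C\,\omega_\phi$ by a term that, with all derivatives up to order $\ell$, tends to $0$ uniformly in $x_0$ once $\nu$ is taken large enough depending on $\ell$. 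Absorbing $C$ into the cone parameters (the Calabi family is closed under homothety), this establishes conditions (i)--(iii) of Definition~\ref{defn:bound-geo} with constants independent of $x_0$, i.e. bounded geometry of order $\ell$.

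The main obstacle will be the bookkeeping in this last step: one must verify that each covariant derivative up to order $\ell$ of the error costs only a fixed power of $\rho$, so that the total decay rate $N(\nu)$ genuinely grows with $\nu$ and overwhelms both the rescaling factor $\rho_0^{-2}$ and the differentiations; and one must extract genuine holomorphic coordinate charts from the merely approximately holomorphic $\psi$, rather than smooth ones, since Definition~\ref{defn:bound-geo} demands holomorphic charts with complex Hessian bounds. Both points hinge on keeping the conversion between the $\nu$-jet of $\psi$ and polynomial decay in $\rho$ uniform across the slices, which is exactly where the freedom to enlarge $\nu$ is used.
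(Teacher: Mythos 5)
Your treatment of the model metric (scaling charts transported from a compact slice, uniformity in the Calabi parameter) is essentially the paper's first step. But the transfer to $Y$ contains two genuine gaps, and they are exactly the two points you defer to at the end as ``the main obstacle'': the paper's proof consists mostly of closing them, and flagging them is not the same as resolving them.

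The first gap is the conversion ``vanishing to order $\nu$ along $D$ $\Rightarrow$ decay $O(\rho^{-N(\nu)})$ with $N(\nu)\to\infty$.'' This is not mere bookkeeping, because two different radial functions are in play. The jet of $\psi$ is holomorphic along $D$ with respect to the standard holomorphic coordinates, so the discrepancy $i\partial\ol{\partial}(\psi^*G)-\psi^*\omega_\phi$ vanishes in powers of $|z_n|\sim r_0^{-1}$, where $r_0$ is the radius of the \emph{standard} Sasaki structure of Example~\ref{xpl:standard}. But $\rho$ is the distance of the Calabi ansatz metric built on the Sasaki--Einstein radius $r_1$, whose Reeb field $\xi_1$ is in general \emph{different} from $\xi_0$ --- this is the whole point of Theorem~\ref{thm:main} in the irregular case. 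Your substitution $\rho\sim r^{\kappa/(2(m+1))}$ silently identifies the two radii. The paper has to prove they are polynomially comparable: it compactifies to $Z=\mathbb{P}(\mathbf{L}\oplus\C)$, uses the Baily embedding to see that the torus action is algebraic, deduces the local expression (\ref{eq:potent-expr}) $r_1=|z_n|^{-q}f(z)$, and exploits the positivity $\frac{1}{2}\eta_1(\xi_0)>c>0$ of (\ref{eq:reeb-bound}) --- which holds precisely because both Reeb fields lie in the Sasaki cone --- to integrate up to the two-sided bounds (\ref{eq:potent-bound}) and (\ref{eq:dist-bound}). Without this step, $N(\nu)\to\infty$ is unjustified and the whole transfer collapses.

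The second gap is the production of genuinely \emph{holomorphic} charts, which Definition~\ref{defn:bound-geo} demands; your parenthetical that the approximately holomorphic $\psi$ ``still furnishes admissible holomorphic coordinates after a uniformly small correction'' asserts the conclusion rather than proving it. The paper constructs the correction explicitly: after normalizing the pulled-back coordinates $w_j$ so that $\ol{\partial}w_j=0$ and $\nabla dw_j=0$ at the center, a fourth-order Taylor estimate gives $|\ol{\partial}w_j|^2\le C\rho^4$ as in (\ref{eq:dbar-bound2}); H\"{o}rmander's $L^2$-estimate with the singular weight $(2n+3)\log\rho+2\log(1+\rho^2)$ then yields $u_j$ with $\ol{\partial}u_j=\ol{\partial}w_j$ and $u_j=du_j=0$ at the center, so that $z_j=w_j-u_j$ are holomorphic coordinates; finally the elliptic and Sobolev estimates (\ref{eq:ellip-sob}), (\ref{eq:sob-ineq}) make the correction small in $C^{\ell,\gamma}$ \emph{uniformly over all charts}, using again that $\nu$ can be taken large. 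This uniform $\ol{\partial}$-correction argument is the technical core of the proposition and is absent from your proposal.
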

\begin{proof}
We first show that the metric $\ol{\omega}=\omega_\phi$ of Proposition~\ref{prop:approx} has bounded
geometry of any order $\ell$.  Let $r_0 >0$ be sufficiently large so that the set
$Y_{r_0} :=\{x\in\mathbf{L}: r(x)\geq r_0\}\subset U$.  We denote
the metric on $U$ of (\ref{eq:cal-ansatz}) by $\omega_{\phi,a}$ with $\phi$ and $\tau$ as in (\ref{eq:prof3}) and
(\ref{eq:prof4}) with constant $a\geq 0$.  Since $S_{r_0}=\{x\in\mathbf{L}: r(x)= r_0\}$
is compact it can be covered by finitely many charts satisfying Definition~\ref{defn:bound-geo},
with the bounds $\psi_x ^*g$ uniform for $a$ in $[0,a_0]$.
By shrinking $R$ we get a chart $\psi_x :U_x \rightarrow U$ satisfying
Definition~\ref{defn:bound-geo} for every $x\in S_{r_0}$.

Now for $b>1$ let $R_{b}: U\rightarrow U$ be the map generated by the action of
$r\frac{\partial}{\partial r} -i\xi$.  Thus $R_{b}(Y_{r_0})=Y_{br_0}$.    Simple calculation shows that
$R_b^* \omega_{\phi,a} = e^{\frac{\kappa b}{m+1}} \omega_{\phi,\frac{a}{b}}$.
The distance $\rho$ is equivalent to $e^{\frac{\kappa t}{2(m+1)}}$, i.e.
$c_1 \rho \leq e^{\frac{\kappa t}{2(m+1)}} \leq c_2 \rho$ for $c_1, c_2 >0$.
Then $U_x \overset{\psi_x}{\rightarrow} U \overset{R_b}{\rightarrow} U$ defines the required coordinate
chart centered at $bx$.

Recall we are considering $\mathbf{L}^\times$ with a Sasaki-Einstein structure on the link $S\subset\mathbf{L}^\times$ which we denote
$(g_1,\xi_1,\eta_1,\Phi_1)$, with K\"{a}hler potential $\frac{r_1^2}{2}$.  Denote by
$(g_0,\xi_0,\eta_0,\Phi_0)$ the standard Sasaki structure on $\mathbf{L}^\times$ satisfying Proposition~\ref{prop:CY-cond}
with K\"{a}hler potential $\frac{r_0^2}{2}$.
Let $T^k$ be the torus acting on $\mathbf{L}^\times$ whose Lie algebra $\mathfrak{t}$ contains both
$\xi_0$ and $\xi_1$.  Define $Z=\mathbb{P}(\mathbf{L}\oplus\C)$.  Let $\mathbf{H}$ be the tautological
orbifold bundle on $Z$ restricting to the hyperplane bundle on each fiber of $\pi:Z\rightarrow D$.
Then the bundle $\mathbf{E}=\mathbf{H}\otimes\pi^* \mathbf{K}_D^{-j}$ is positive for $j>>0$.
Clearly, $T^k$ and its complexification $(\C^*)^k$ acts on $\mathbf{E}$.  By the Baily embedding theorem~\cite{Ba}
we have an embedding $\iota_{\mathbf{E}^\ell} :Z\rightarrow\cps^N$ for $\ell>>0$.
It follows that $\psi :(\C^*)^k \times Z\rightarrow Z$ is an algebraic action.
In particular, $\psi$ extends to a rational map $\ol{\psi}:(\cps^1)^k \times Z\rightarrow Z$.
If $z_1\frac{\partial}{\partial z_1},\ldots, z_k\frac{\partial}{\partial z_k}$ is the basis of the Lie
algebra $\mathfrak{t}_{\C}$ of $(\C^*)^k$, then
$\frac{\partial}{\partial r_1} =\sum_j b_j z_j\frac{\partial}{\partial z_j}$ for $(b_1,\ldots,b_k)\in\R^k$
defines an embedding $\R_+ \rightarrow (\C^*)^k$.  And the restriction
$\tilde{\phi}:\R_+ \times S \rightarrow Z$ is a diffeomorphism onto $\mathbf{L}^\times$.  Thus
if $P_1 :\R_+ \times S \rightarrow\R_+$ denotes the projection, then $r_0 =P_1 \circ\tilde{\phi}^{-1}$.

Let $(W,z_1,\ldots,z_{m})$ be a coordinate neighborhood on $D$ trivializing $\mathbf{L}$ with fiber coordinate
$w$, where $m=n-1$ as above.  Then $(z_1,\ldots, z_{n-1}, z_{n})$ with $z_{n} =\frac{1}{w}$ defines a
coordinate $U$ chart intersecting $D\subset Y$, as the section at infinity, with $D\cap U=\{z\in V: z_{n}=0\}$.
And $D$ can be covered by finitely many such coordinate charts.  Since $\psi$ is a rational map, in $U$ we have
\begin{equation}\label{eq:potent-expr}
r_1 =\frac{1}{|z_{n}|^q}f(z),
\end{equation}
with $q\in\R_+$ and $f(z)$ a smooth function.

As before denote $t_0 =\log r_0$ and $t_1 =\log r_1$.  Then from (\ref{eq:cont}) we have
\begin{equation}\label{eq:reeb-bound}
\frac{\partial t_1}{\partial t_0}=\frac{r_0}{r_1}\frac{\partial r_1}{\partial r_0} =\frac{1}{2}\eta_1(\xi_0)>c>0,
\end{equation}
for a constant $c$.  Since $\left[\frac{\partial}{\partial r_1},\frac{\partial}{\partial r_0}\right]=0$,
it follows from integrating (\ref{eq:reeb-bound}) and a similar bound for $\frac{\partial t_0}{\partial t_1}$
that
\begin{equation}\label{eq:potent-bound}
C_1 r_0^{q_1} \leq r_1 \leq C_2 r_0^{q_2},
\end{equation}
for $q_1 ,q_2 \in\R_+, q_1 >q_2$.
We denote by $\rho_0$ and $\rho_1$ the distance from a fixed point $x\in S\subset\mathbf{L}^\times$
with respect to the metrics $g_0$ and $g_1$.  Then we have
\begin{equation}\label{eq:dist-bound}
C_1 \rho_0 (y)^{q_1} \leq \rho_1 (y) \leq C_2 \rho_0 (y)^{q_2},\quad\text{for}\quad y\in\mathbf{L}^\times.
\end{equation}
And we have a bound as in (\ref{eq:dist-bound}) for the distance $\rho$ of $\ol{\omega}$.

We have the diffeomorphism $\psi:V\rightarrow U$, where $V\subset X$ is a neighborhood of $D\subset X$,
whose jet is holomorphic to arbitrarily high order along $D$.
Let $\ol{g}$ denote the metric on $U\subset\mathbf{L}^\times$ with K\"{a}hler form
$\ol{\omega}$.  For notational simplicity denote the metric $\psi^* \ol{g}$ on $V\subset X$
also by $\ol{g}$.  We also have the metric on $V$ with K\"{a}hler form $dd^c \psi^* G$ which
we denote $g$.  It follows from (\ref{eq:potent-expr}) and (\ref{eq:dist-bound}) that for $k_0 >0$
if $\psi:V\rightarrow U$ is chosen to have holomorphic jet along $D$ of sufficiently high
order $\mu$, then
\begin{equation}\label{eq:approx-bound}
\|\nabla^j (g-\ol{g})\| =O(\rho^{-\alpha-2-j}),\quad\text{for}\quad 0\leq j\leq k_0,
\end{equation}
where $\alpha >2n$ and $\nabla$, the norm, and the distance $\rho$ is with respect to $\ol{g}$.

Let $(U'_x ,z_1,\ldots,z_{n}) ,x\in U'$ be the family of neighborhoods on $U'\subset\mathbf{L}^\times$ satisfying
Definition~\ref{defn:bound-geo}.  Then the $w_j =\phi^* z_j$ form non-holomorphic coordinates on
$U_v =\phi^{-1}(U'_x), \phi(v)=x$ satisfying the conditions in~\ref{defn:bound-geo}
for the metric $\rho(v)^{-2}\phi^* \ol{g}$, and thus also for $\rho(v)^{-2}g$ by (\ref{eq:approx-bound}).
In the following we will consider each $U_x$, $x\in V$, with the metric $\rho(x)^{-2}g$, and
$\rho$ is the distance with respect to this metric.

Consider the real coordinates $w_j =x_j +iy_j ,j=1,\ldots,n,$ on $U_v ,v\in V$, and denote $y_j$ by
$x_{n+j}$.  First make a linear change of coordinates so that each $dw_j |_v \in\Lambda^{1,0}$,
i.e. $\ol{\partial}w_j =0$ at $v$.
Then make the change of coordinates $x_k \mapsto x_k +\frac{1}{2}\Gamma^k_{ij}x_i x_j$, so that
$\Gamma^k_{ij}(v)=0$.  Thus $\nabla dw_j =0$ at $v$.

Let $\rho$ be the distance from $x\in U_x$.  Since  $\underset{V}{\sup}\|R(g)\|_g \leq C$,
by the Hessian comparison theorem (cf.~\cite{SiuYau}) there are positive constants $r$ and $C_1$
so that restricting to $B_r=\{y :\rho(y)<r\}\subset U_x$
the functions $\rho^2$ and $\log(\rho^2)$ are plurisubharmonic and
\begin{equation}
i\partial\ol{\partial}\rho^2 \geq C_1 >0,\quad\text{on}\quad B_r.
\end{equation}
Here $r,C_1$ can be chosen uniformly for every chart $U_x$.  Replace each $U_x, x\in V$ with its
restriction to $B_r$.  Thus, in particular, each $U_x$ is strictly pseudoconvex.

Let $P_{0,1}$ be the projection of $\Lambda^1$ onto its $(0,1)$ component, then $\ol{\partial} w_j =P_{0,1} dw_j$.
Since $\nabla$ preserves types, $\nabla\ol{\partial} w_j =0$ at $x$.  Let $t$ denote the radial distance
from $x\in U_x$ along a geodesic.  Then $\frac{d^i}{dt^i}g(\ol{\partial} w_j,\ol{\partial} w_j)=0$ at
$x\in U_x$ for $0\leq i\leq 4$.  Expanding gives
\begin{equation}\label{eq:dbar-bound1}
\begin{split}
\frac{d^4}{dt^4}g(\ol{\partial} w_j,\ol{\partial} w_j) & =
g(P_{0,1}\nabla^4_t dw_j, P_{0,1}dw_j)+g(P_{0,1}\nabla^3_t dw_j, P_{0,1}\nabla_t dw_j)+
g(P_{0,1}\nabla^2_t dw_j, P_{0,1}\nabla^2_t dw_j)\\
& \quad +g(P_{0,1}\nabla_t dw_j, P_{0,1}\nabla^3_t dw_j)+g(P_{0,1} dw_j, P_{0,1}\nabla^4_t dw_j)\\
& \leq g(\nabla^4_t dw_j, dw_j)+g(\nabla^3_t dw_j, \nabla_t dw_j)+
g(\nabla^2_t dw_j, \nabla^2_t dw_j)\\
& \quad +g(\nabla_t dw_j, \nabla^3_t dw_j)+g(dw_j, \nabla^4_t dw_j)\\
& \leq C, \\
\end{split}
\end{equation}
where $C$ depends on the uniform bounds of the metric and its derivatives in the coordinates
$(w_1,\ldots,w_{n})$. Integrating (\ref{eq:dbar-bound1}) four times gives
\begin{equation}\label{eq:dbar-bound2}
g(\ol{\partial} w_j,\ol{\partial} w_j)\leq C\rho^4.
\end{equation}
By H\"{o}rmander's $L^2$-estimate with weight function $\phi=(2n+3)\log\rho +2\log (1+\rho^2)$,
which is strictly plurisubharmonic after possibly shrinking $R$, there is a function
$u_j\in C^\infty (U_x)$ so that $\ol{\partial}u_j=\ol{\partial}w_j$ and
\begin{equation}
\int_{U_x} |u_j|^2 e^{-\phi} d\mu \leq C_1\int_{U_x}|\ol{\partial}w_j |^2 e^{-\phi}d\mu \leq C_2,
\end{equation}
where $C_2 >0$ follows from (\ref{eq:dbar-bound2}) and $C_1 ,C_2 >0$ are independent of the neighborhood $U_x$.
Because of the singularity of $\phi$, $u_j =du_j =0$ at $x$.  Then the functions
$z_j =w_j -u_j ,j=1,\ldots, n,$ give a holomorphic coordinate system in a neighborhood of $x\in U_x$ which,
after shrinking $R$ if necessary, we denote by $U_x$ again.

In each $U_x$ we consider the Laplacian $\Delta$ with respect to the K\"{a}hler metric $\rho(x)^{-2} g$.
Let $U'_x \subset U_x$ be the ball of radius $R'< R$ with respect to this metric.
Then for $\Delta u_j =\Delta w_j =f$ we have the Sobolev estimate
\begin{equation}\label{eq:ellip-sob}
\|u_j \|_{L^2_{s+2}(U'_x)} \leq C\left(\|f\|_{L^2_{s}(U_x)} +\|u\|_{L^2(U_x)}\right),
\end{equation}
where $C$ is independent of $U_x$.  Also the Sobolev inequality gives the H\"{o}lder bound
\begin{equation}\label{eq:sob-ineq}
\| u_j\|_{C^{s+1 -n,\gamma}(U'_x)} \leq C' \|u_j \|_{L^2_{s+2}(U'_x)},
\end{equation}
for $0<\gamma <1$, where $C'$ depends only on $s$ and $\gamma$.  By choosing $\phi:V\rightarrow U$ to have
jet along $D$ which is holomorphic of sufficiently high order $\mu$ and shrinking $R'>0$ if necessary
we may bound $u_j$ in $C^{\ell,\gamma}$ by an arbitrarily small constant in each $U'_x$.
So in each $U'_x$ the $z_j =w_j -u_j ,j=1,\ldots, n,$ define a system of coordinates
satisfying the conditions of Definition~\ref{defn:bound-geo}.
\end{proof}

In the following we will denote the metric of Proposition~\ref{prop:approx-metric} by $g_0$ with K\"{a}hler form $\omega_0$.
We will prove that the Ricci-flat metric $g$ with K\"{a}hler form $\omega$ of Proposition~\ref{prop:TY} has nice asymptotic
properties.
\begin{prop}\label{prop:asymp}
If $\phi$ is the solution to (\ref{eq:monge}) of Proposition~\ref{prop:TY} for any $\delta >0$ there is a compact $K\subset Y$ with	
\[ -C_1 \rho(y)^{-2n +2+\delta}\leq \phi\leq C_2 \rho(y)^{-2n+2+\delta},\quad\text{for}\quad y\in Y\setminus K,   \]						
where $C_1,C_2 >0$.
\end{prop}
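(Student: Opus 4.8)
\medskip
The plan is to combine the rapid decay of $f$ with a maximum principle (barrier) argument, exploiting that $(Y,g_0)$ is asymptotic to the Calabi ansatz cone metric, a metric cone of real dimension $2n$ whose radial Laplacian carries the indicial factor $s\mapsto s(s-(2n-2))$; the slowest-decaying harmonic function on this model decays like $\rho^{-(2n-2)}$, which is precisely the borderline exponent in the statement. Concretely, I would first translate the high-order vanishing of $f$ from Proposition~\ref{prop:approx-metric} into decay in $\rho$. Since $f$ and its derivatives up to order $\nu-2$ vanish along $D$, and $D$ lies at infinity with the coordinate distance to $D$ comparable to a negative power of $r_0$, hence of $\rho$ by (\ref{eq:potent-bound}) and (\ref{eq:dist-bound}), one obtains $|f|\le C_N\rho^{-N}$ with $N=N(\nu)\to\infty$ as $\nu\to\infty$. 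Thus, by taking $\nu$ large, I may assume $f$ decays faster than any fixed power of $\rho$, in particular faster than $\rho^{-(2n-\delta)}$.

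\emph{Differential inequalities.} From $\omega^n=e^f\omega_0^n$ and the arithmetic--geometric mean inequality applied to the eigenvalues of $\omega$ relative to $\omega_0$, I would derive, on $Y\setminus K$ where $\phi$ is small,
\[
\Delta_0\phi=\operatorname{tr}_{\omega_0}\omega-n\ge n\bigl(e^{f/n}-1\bigr)\ge -C|f|,\qquad
\Delta_\omega\phi=n-\operatorname{tr}_{\omega}\omega_0\le n\bigl(1-e^{-f/n}\bigr)\le C|f|,
\]
where $\Delta_0$ and $\Delta_\omega$ are the Laplacians of $g_0$ and of the Ricci-flat metric $g$, which are uniformly equivalent by Proposition~\ref{prop:TY}. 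Each inequality uses the operator for which the arithmetic--geometric mean bound points in the useful direction.

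\emph{Barrier and comparison.} For $s=2n-2-\delta$ take $v=A\rho^{-s}$, smoothed and extended to a positive function on $Y\setminus K$. On the end, where $g_0$ is asymptotic to the cone metric, $\Delta_0 v=s(s-(2n-2))A\rho^{-s-2}+(\text{l.o.t.})=-\delta(2n-2-\delta)A\rho^{-(2n-\delta)}+\cdots$, which is strictly negative of order $\rho^{-(2n-\delta)}$. Since $|f|$ decays faster than $\rho^{-(2n-\delta)}$, this gives $\Delta_0 v\le -C|f|$ for $\rho$ large. Comparing with the first inequality, $\phi-v$ is $\Delta_0$-subharmonic, tends to $0$ at infinity, and can be made $\le 0$ on $\partial K$ by enlarging $A$, so the maximum principle yields the upper bound $\phi\le A\rho^{-2n+2+\delta}$. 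For the lower bound I would run the identical comparison of $-v$ against the second inequality using $\Delta_\omega$, once $\Delta_\omega v\le -C|f|$ is verified.

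\emph{Main obstacle.} The delicate point is exactly this last check, that the barrier remains a supersolution for the solution Laplacian $\Delta_\omega$ and not merely for $\Delta_0$. Writing $\Delta_\omega v=\Delta_0 v+(\omega^{i\bar j}-\omega_0^{i\bar j})(i\partial\bar\partial v)_{i\bar j}$, the correction is $O\bigl(\|i\partial\bar\partial\phi\|_{g_0}\,\rho^{-s-2}\bigr)$, and to dominate it by the leading term $\rho^{-s-2}$ of $\Delta_0 v$ one needs $\|i\partial\bar\partial\phi\|_{g_0}=o(1)$ on the end. This is supplied by the bounded geometry of Proposition~\ref{prop:bound-geo}: rescaling to unit scale and applying interior Schauder estimates to the uniformly elliptic equation~(\ref{eq:monge}), the rescaled $C^{2,\gamma}$ norm of $\phi$ is controlled by $\sup|\phi|\to 0$, whence $\|i\partial\bar\partial\phi\|_{g_0}\to 0$ and the correction is of strictly lower order. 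This is also the source of the loss ``$\delta$'': at the exponent $s=2n-2$ the indicial factor $s(s-(2n-2))$ vanishes, $\rho^{-(2n-2)}$ becomes asymptotically harmonic, and the barrier no longer dominates $|f|$; recovering the sharp rate requires the finer Fredholm analysis of $\Delta_0$ on weighted spaces, as in~\cite[Theorem 3.10]{vC4}.
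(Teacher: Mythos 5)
Your argument is correct, but it is implemented differently from the paper's. Both proofs are barrier arguments using the same power-law barrier at the borderline indicial rate (the paper takes $\tau=\pm C\,\psi^*(r^{\alpha})$ with $\alpha>-\tfrac{(n-1)\kappa}{n}$, which in the distance coordinate $\rho\sim r^{\kappa/2n}$ is exactly your $\rho^{-(2n-2)+\delta}$), and both exploit the rapid vanishing of $f$ along $D$. The difference is where the comparison happens. The paper never linearizes: its key lemma (quoted from \cite[Lemma 5.2]{vC2}) is that $(\omega_\phi+i\partial\ol{\partial}\tau)^n<\omega_\phi^n$, i.e.\ the barrier is a super/subsolution of the Monge--Amp\`ere equation itself; after transferring this to $\omega_0$ using the decay of $f$ and (\ref{eq:approx-bound}), the conclusion follows from the maximum principle as in \cite{CheY}, or from the nonlinear comparison principle \cite[Theorem 17.1]{GiTr} applied to $\phi-\tau$. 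That route needs no information about $i\partial\ol{\partial}\phi$ at all --- only that $\phi\to 0$ at infinity --- since ellipticity of $\log\det$ on the cone of positive forms is enough for the comparison. Your route linearizes via AM--GM into inequalities for the two Laplacians $\Delta_0$ and $\Delta_\omega$ (the scheme used by Joyce \cite{Joy} in the ALE setting), and its price is the step you correctly isolate as the main obstacle: the barrier must remain a supersolution for $\Delta_\omega$, which forces you to first prove $\|i\partial\ol{\partial}\phi\|_{g_0}=o(1)$. That step does work, but note that Schauder theory is linear, so it should be run not on (\ref{eq:monge}) directly but on its exact linearization, e.g.\ the identity $i\partial\ol{\partial}\phi\wedge(\omega_0^{n-1}+\omega_0^{n-2}\wedge\omega+\cdots+\omega^{n-1})=(e^f-1)\,\omega_0^n$, which is a uniformly elliptic linear equation for $\phi$ whose coefficients are H\"older precisely because of the $C^{2,\frac{1}{2}}$ bound in Proposition~\ref{prop:TY}; applied in the unit-scale charts furnished by Proposition~\ref{prop:bound-geo}, this is exactly the operator $P_{x,r}$ that the paper introduces later, in the proof of Proposition~\ref{prop:decay}. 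In exchange, your version makes the exponent $2n-2$ and the loss of $\delta$ conceptually transparent as the indicial root of the cone Laplacian, and the second-order smallness you establish is in any case needed for the derivative estimates of Proposition~\ref{prop:decay}.
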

\begin{proof}
Outside a compact set the metric $g_0$ is close according to the estimate (\ref{eq:approx-bound}) to the Calabi ansatz $\omega_\phi$
with profile function (\ref{eq:prof4}).  The distance from a fixed point $\rho(y)$ is equivalent to $r' =r^{\frac{\kappa}{2n}}$.
Then a complicated but straightforward calculation shows that with $\tau =C_1 r^\alpha$ we have
\begin{equation}
 (\omega_\phi +i\partial\ol{\partial}\tau)^n < \omega_\phi^n,
\end{equation}
for $\alpha > -\frac{(n-1)\kappa}{n}$.  And similarly, if $\tau =-C_2 r^\alpha$
\begin{equation}
 (\omega_\phi +i\partial\ol{\partial}\tau)^n >\omega_\phi^n,
\end{equation}
for $\alpha > -\frac{(n-1)\kappa}{n}$ (cf.~\cite[Lemma 5.2]{vC2}).  Then for $C_1, C_2 >0$ chosen sufficiently large we can apply
the maximum principle.  But first observe that $f$ in Proposition~\ref{prop:approx-metric} vanishes to arbitrarily high order along $D$.
Then from (\ref{eq:dist-bound}) and (\ref{eq:approx-bound}) if we denote $\psi^*\tau$ by $\tau$ also, we have in a neighborhood $V$ of $D$
\begin{equation}\label{eq:max-pri}
 (\omega_0 +i\partial\ol{\partial}\tau)^n \leq \omega_0^n e^f.
\end{equation}
And for $\tau =-C_2 \psi^*(r^\alpha)$ we have on $V$
\begin{equation}
 (\omega_0 +i\partial\ol{\partial}\tau)^n \geq \omega_0^n e^f.
\end{equation}
Thus if $C_1, C_2 >0$ are chosen sufficiently large, since $\phi$ converges uniformly to zero at infinity, we may apply
the maximum principle as in~\cite{CheY}) to get the inequalities.

Alternatively, one can apply the non-linear version of the maximum principle as follows.
We have (\ref{eq:max-pri}) in $V$.  Choose $C_1 >0$ large enough that $\tau =C_1 r^\alpha \geq\phi$ on
$\partial V$.  Since $\lim_{\rho\rightarrow\infty}\phi- \tau=0$ and $\phi-\tau \leq 0$ on $\partial V$, either
$\phi-\tau\leq 0$ on $V$ or $\phi-\tau$ attains its maximum at $x\in V$.  If the latter, then~\cite[Theorem 17.1]{GiTr} applied
in a coordinate neighborhood $(U,z_1,\ldots,z_n)$ centered at $x\in V$ implies that $\phi-\tau$ is constant on $U$.
And by taking other coordinate neighborhood one sees that $\phi-\tau$ is constant, and therefore $\phi=\tau$.
The lower bound is obtained similarly.
\end{proof}

For the following proposition we will need \emph{weighted} H\"{o}lder spaces.
For $\beta\in\R$ and $k$ a nonnegative integer we define $C_\beta^k(Y)$ to be the space of continuous functions $f$
with $k$ continuous derivatives for which the norm
\[ \|f\|_{C_\beta^k} := \sum_{j=0}^k \underset{Y}{\sup}|\rho^{j-\beta}\nabla^j f|\]
is finite.  Then $C_\beta^k(Y)$ is a Banach space with this norm.

Let $\inj(x)$ be the injectivity radius at $x\in Y$, and $d(x,y)$ the distance between $x,y\in Y$.  Then for
$\alpha,\beta\in\R$ and $T$ a tensor field define
 \[ [T]_{\alpha,\beta} :=\sup_{\substack{x\neq y\\ d(x,y)<\inj(x)}}\left[\min(\rho(x),\rho(y))^{-\beta}\cdot\frac{|T(x)-T(y)|}{d(x,y)^\alpha}\right],  \]
where $|T(x)-T(y)|$ is defined by parallel translation along the unique geodesic between $x$ and $y$.

For $\alpha\in(0,1)$ define the \emph{weighted H\"{o}lder space} $C_\beta^{k,\alpha}(Y)$ to be the set of
$f\in C_\beta^k(Y)$ for which the norm
\begin{equation}\label{eq:weight-hol}
\|f\|_{C_\beta^{k,\alpha}} :=\|f\|_{C_\beta^k} +[\nabla^k f]_{\alpha,\beta-k-\alpha}
\end{equation}
is finite.  Then $C_\beta^{k,\alpha}(Y)$ is a Banach space with this norm.

In the case of $(Y,g_0)$ with bounded geometry of order $\ell >k$ it is not difficult to see that the norm
(\ref{eq:weight-hol}) is equivalent to taking the norm $\rho(x)\|\cdot\|_{C^{k,\alpha}}$ in each coordinate system
$(U_x,z_1,\ldots,z_n)$, where $\|\cdot\|_{C^{k,\alpha}}$ is the H\"{o}lder norm with respect to the usual Euclidean metric.

\begin{prop}\label{prop:decay}
For any $\delta >0$ the solution to (\ref{eq:monge}) of Proposition~\ref{prop:TY} satisfies
\[ \|\nabla ^j \phi \|_{g_0} =O(\rho ^{-2n +2 -j+\delta}),\quad\text{for}\quad j\leq k.\]
\end{prop}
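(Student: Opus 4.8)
The plan is to combine the $C^0$ decay already obtained in Proposition~\ref{prop:asymp} with the bounded geometry of $(Y,g_0)$ (Proposition~\ref{prop:bound-geo}) by means of a rescaled, or \emph{blow-up}, Schauder argument. Fix the weight $w=-2n+2+\delta$; by the definition of the weighted spaces it suffices to show $\phi\in C^{k,\alpha}_w(Y)$, since this is precisely the statement $\|\nabla^j\phi\|_{g_0}=O(\rho^{w-j})$ for $j\le k$. We already know $\phi\in C^0_w$ from Proposition~\ref{prop:asymp}, and $f$ vanishes to order $\nu-2$ along $D$, hence $f\in C^{k,\alpha}_{-N}$ for $N$ as large as we wish (by taking $\nu$ large in Proposition~\ref{prop:approx-metric}); in particular $f\in C^{k,\alpha}_{w-2}$ with decay to spare.

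First I would set up the rescaling. For $x\in Y$ far from the compact set write $\lambda=\rho(x)$ and consider the rescaled metric $\tilde g_x=\lambda^{-2}g_0$ with K\"{a}hler form $\tilde\omega_0=\lambda^{-2}\omega_0$. On the bounded-geometry chart $(U_x,z_1,\dots,z_n)$ of Definition~\ref{defn:bound-geo} the form $\tilde\omega_0$ and its derivatives are uniformly bounded, independently of $x$. Setting $\tilde\phi=\lambda^{-2}\phi$ and treating $\lambda=\rho(x)$ as the constant rescaling factor on $U_x$, the Monge-Amp\`{e}re equation (\ref{eq:monge}) is form-invariant under this scaling, becoming $(\tilde\omega_0+i\partial\ol{\partial}\tilde\phi)^n=e^f\,\tilde\omega_0^n$. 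By Proposition~\ref{prop:TY} we have $c^{-1}\omega_0\le\omega\le c\omega_0$, so $\tilde\omega:=\tilde\omega_0+i\partial\ol{\partial}\tilde\phi=\lambda^{-2}\omega$ is uniformly equivalent to $\tilde\omega_0$, and the linearization of the equation, the Laplacian $\Delta_{\tilde\omega}$, is uniformly elliptic on $U_x$, uniformly over $x$.

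The core of the argument is a uniform interior estimate on the unit ball $B_1\subset U_x$. Since the solution is smooth (Proposition~\ref{prop:TY}) and the $C^{2,\frac12}$ bound is uniform, the coefficients of $\Delta_{\tilde\omega}$ lie in $C^{0,1/2}$ with uniform bounds; differentiating the equation and applying interior Schauder estimates successively gives the standard elliptic bootstrap, yielding $\|\tilde\phi\|_{C^{k,\alpha}(B_{1/2})}\le C\bigl(\|f\|_{C^{k,\alpha}(B_1)}+\|\tilde\phi\|_{C^0(B_1)}\bigr)$ with $C$ independent of $x$. Now $\|\tilde\phi\|_{C^0(B_1)}=\lambda^{-2}\|\phi\|_{C^0}=O(\lambda^{w-2})$ by Proposition~\ref{prop:asymp}, while $\|f\|_{C^{k,\alpha}(B_1)}=O(\lambda^{-N})$ is negligible, so $\|\tilde\phi\|_{C^{k,\alpha}(B_{1/2})}=O(\lambda^{w-2})$. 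Undoing the rescaling, coordinate derivatives on the chart for $\tilde g_x$ correspond to $\lambda^{j}\nabla^j_{g_0}$ and $\tilde\phi=\lambda^{-2}\phi$, which converts the bound into $\|\nabla^j\phi\|_{g_0}=O(\lambda^{w-j})$ at $x$ for $j\le k$; as $x$ is arbitrary this is the assertion. Equivalently, one has packaged the whole computation as the weighted Schauder estimate $\|\phi\|_{C^{k,\alpha}_w}\le C\bigl(\|f\|_{C^{k,\alpha}_{w-2}}+\|\phi\|_{C^0_w}\bigr)$, whose right-hand side is finite by the above.

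The main obstacle is the uniformity of the interior estimate across the infinitely many charts $U_x$: one must verify that the ellipticity constant of $\Delta_{\tilde\omega}$, the H\"{o}lder norms of its coefficients, and hence the Schauder constant $C$, can all be taken independent of $x$. This is exactly what the combination of the uniform $C^{2,\frac12}$ bound of Proposition~\ref{prop:TY} with the uniform bounded geometry of order $\ell>k$ from Proposition~\ref{prop:bound-geo} is designed to supply; the remaining work is the bookkeeping of weights under the $\lambda^{-2}$ rescaling, and checking that the $f$-terms produced by differentiating the equation decay faster than the $\phi$-term, which holds by choosing $\nu$ large.
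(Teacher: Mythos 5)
Your overall strategy (rescaled Schauder estimates on the bounded-geometry charts, fed by the $C^0$ decay of Proposition~\ref{prop:asymp}) is the same family of argument as the paper's, and your final weight bookkeeping is correct. But there is a genuine gap at exactly the step you flag as ``the main obstacle'': the uniformity, over $x$, of the Schauder constant on charts rescaled all the way to scale $\rho(x)$. You claim the coefficients of $\Delta_{\tilde\omega}$ are uniformly in $C^{0,1/2}$ on $B_1\subset U_x$ because ``the $C^{2,\frac12}$ bound is uniform.'' This does not follow: the $C^{2,\frac12}$ bound of Proposition~\ref{prop:TY} is taken with respect to $g_0$, and H\"{o}lder seminorms scale badly under $\tilde g_x=\rho(x)^{-2}g_0$; for a function $u$ one has $[u]_{C^{0,\alpha}(\tilde g_x)}=\rho(x)^{\alpha}[u]_{C^{0,\alpha}(g_0)}$, so the seminorm of your coefficients in the rescaled chart is $O\bigl(\rho(x)^{1/2}\bigr)$, not $O(1)$. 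Worse, for the $C^{k,\alpha}$ estimate you need $C^{k-2,\alpha}$ bounds on the coefficients at scale $\rho(x)$, i.e.\ bounds of the form $\|\nabla^j dd^c\phi\|_{g_0}=O(\rho^{-j})$ --- which is essentially the conclusion you are trying to prove, so the argument is circular there. Consequently your ``core'' inequality $\|\tilde\phi\|_{C^{k,\alpha}(B_{1/2})}\le C\bigl(\|f\|_{C^{k,\alpha}(B_1)}+\|\tilde\phi\|_{C^0(B_1)}\bigr)$ with $C$ independent of $x$ is asserted but not available; note also that it is a linear-type estimate, while (\ref{eq:monge}) is fully nonlinear, so even its form requires justification.

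The paper closes this gap with two moves absent from your proposal. First, it replaces the nonlinear equation by a genuinely linear one, $P_{x,r}\bigl(\psi_{x,r}^*\phi\bigr)=r^2\bigl(1-\psi_{x,r}^*(e^f)\bigr)$, where $P_{x,r}$ is the second-order operator (\ref{eq:Sch-op}) coming from the identity $\omega^n-\omega_0^n=dd^c\phi\wedge(\omega_0^{n-1}+\cdots+\omega^{n-1})$; only for such a linear operator is the Schauder estimate with $\|\phi\|_{C^0}$ on the right-hand side legitimate, and its coefficients involve $dd^c\phi$, whose control is precisely the hypothesis (\ref{eq:Sch-prop-1}). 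Second --- and this is the key idea you are missing --- it does not jump from unit scale to scale $\rho(x)$ in one step: it first proves uniform unweighted $C^{\ell,\alpha}$ bounds at unit scale ($r=1$) by a nonlinear bootstrap, and then iterates the lemma along intermediate scales $r=\rho(x)^{\lambda_i}$, $0=\lambda_0<\lambda_1<\cdots<\lambda_p=1$, with $\lambda_{i+1}-\lambda_i=-\gamma/(k+\alpha)$. The decay factor $\rho^{\gamma}$, $\gamma=-2n+2+\delta<0$, gained for $\nabla^{j+2}\phi$ at scale $\lambda_i$ is exactly what makes the coefficients of $P_{x,r}$ admissible at the slightly larger scale $\lambda_{i+1}$, so each step funds the next until $\lambda_p=1$. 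Your one-step rescaling could conceivably be repaired by substituting a scale-invariant interior estimate for the nonlinear equation (e.g.\ Evans--Krylov, which needs only the uniform ellipticity $c^{-1}\omega_0\le\omega\le c\omega_0$ and $C^{1,1}$ control, both of which do rescale correctly) in place of the failed coefficient bound, but as written the uniformity claim is false and the argument does not close.
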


\begin{proof}
Let $\{(U_x,\varphi_x , z_1, \ldots,z_n)\}_{x\in Y}$ be a system of holomorphic charts giving $(Y,g_0)$ bounded geometry of order
$\ell$ as in Definition~\ref{defn:bound-geo}, where $\varphi_x :B_R \rightarrow U_x \subset Y$ is the chart centered at $x\in Y$.
We define a chart centered at $x\in Y$ depending on $r>0$, $\psi_{x,r}:B_R \rightarrow U_x$.
Let $R_a :B_R \rightarrow B_R$ be scalar multiplication by $a$.  Then define $\psi_{x,r} := \varphi_x \circ R_{r\rho(x)^{-1}}$.
Note that for $r=1$ the charts $\psi_{x,r}$ are bounded of order $\ell$ for $g_0$, and for $r=\rho(x)$ the charts
$\psi_{x,r} =\varphi_x$ are bounded of order $\ell$ for $\rho^{-2}(x)g_0$.

We first must show that for each $x\in Y$ we have $\|\psi_{x,1}^*\phi \|_{C^{\ell,\alpha}} \leq C$ for some $C>0$ where
the H\"{o}lder norm is with respect to the Euclidean metric $h$ on $B_R \subset\C^n$.  Let $\Delta'$ denotes the Laplacian
with respect to the metric $g$ associated to $\omega=\omega_0 +i\partial\ol{\partial}\phi$; let $\Delta$, $\nabla$, and
$R$ denote the Laplacian, connection, and curvature of $g_0$.  Then the H\"{o}lder bound follows from a bootstrapping
argument involving the equation
\begin{equation}
\Delta'\Delta\phi =g'^{\alpha\ol{\mu}}g'^{\gamma\ol{\beta}}(\nabla^\nu \nabla_{\gamma\ol{\mu}}\phi)(\nabla_\nu \nabla_{\alpha\ol{\beta}}\phi) -\Delta f+g'^{\lambda\ol{\mu}} {R^{\alpha\ol{\beta}}}_{\lambda\ol{\mu}}\nabla_{\alpha\ol{\beta}}\phi- g'^{\nu\ol{\mu}}{R^{\ol{\lambda}}}_{\ol{\mu}}\nabla_{\ol{\lambda}\nu}\phi.
\end{equation}
The complete proof is given in~\cite[Theorem 3.21]{vC4}.  The arguments are also explained in~\cite{Joy}.

We may assume that $f$ is chosen to vanish to high enough order in Proposition~\ref{prop:approx-metric} that $f\in C^{k,\alpha}_\beta(Y)$
where $\beta<-2n$.   Suppose now that $\phi\in C^0_{\gamma}(Y)$ and $\|\phi\|_{C^0_\gamma}=P$ for some $\beta+2\leq\gamma<0$ and $P>0$.
In the following we will also assume that $\ell\geq k+2$.
The proof of the proposition will be based on the following lemma due to D. Joyce~\cite{Joy}.
\begin{lem}
Let $N_1,N_2>0$ and $\lambda\in[0,1]$.  Then there exists an $N_3>$
depending on $(X,g,J),\gamma, \alpha,\ \beta,\ k, P$ and $N_1,\ N_2, \ \lambda$, so that the following holds.

Suppose $\|f\|_{C^{k,\alpha}_\beta}\leq N_1$ and
\begin{equation}
\begin{split}\label{eq:Sch-prop-1}
& \|\nabla^j dd^c\phi\|_{C^0_{-j\lambda}} \leq N_2, \text{ for }j=0,\ldots, k,\\
& \text{ and } \bigl[\nabla^k dd^c\phi\bigr]_{\alpha,-(k+\alpha)\lambda} \leq N_2.
\end{split}
\end{equation}
Then the following inequalities hold, where the norms exist,
\begin{equation}
\begin{split}\label{eq:Sch-prop-2}
& \|\nabla^j \phi\|_{C^0_{\gamma-j\lambda}} \leq N_3, \text{ for }j=0,\ldots, k+2,\\
& \text{ and } \bigl[\nabla^{k+2} \phi\bigr]_{\alpha,\gamma-(k+2+\alpha)\lambda} \leq N_3.
\end{split}
\end{equation}
\end{lem}
\begin{proof}
We define an operator $P_{x,r} :C^{k+2,\alpha}(B_R)\rightarrow C^{k,\alpha}(B_R)$ by
\begin{equation}\label{eq:Sch-op}
(\psi_{x,r})_* \bigl[P_{x,r} u\bigr]\omega_0^n =-r^2 dd^c \bigl[(\psi_{x,r})_* u\bigr]\wedge(\omega_0^{n-1} +\cdots+\omega^{n-1}).
\end{equation}
Thus $P_{x,r}$ is an elliptic operator, and it follows from (\ref{eq:monge}) that
\begin{equation}\label{eq:Sch-eq}
P_{x,r}\bigl(\psi^*_{x,r}\phi \bigr)=r^2 \bigl(1-\psi_{x,r}^*(e^f)\bigr).
\end{equation}

Now set $r=\rho(x)^\lambda$ for $\lambda\in [0,1]$.
By shrinking $R>0$ if necessary we may suppose that
\begin{align}
&\frac{1}{2}\rho(x)\leq\rho(y)\leq 2\rho(x), \quad\text{for all }y\in\psi_{x,r}(B_R), \label{eq:bd-ine}\\
&\|r^{-2} \psi_{x,r}^*g_0 -h\|_{C^{k,\alpha}}\leq\frac{1}{2},\label{eq:bd-Eu}
\end{align}
for each $x\in Y$ where $h$ is the Euclidean metric on $B_R \subset\C^n$.  This makes use of the boundedness of
the coordinate charts $\varphi_x$.
In addition, it follows from (\ref{eq:bd-Eu}) and (\ref{eq:Sch-prop-1}) that
\begin{equation}\label{eq:bd-sym}
\|r^{-2} \psi_{x,r}^*\omega \|_{C^{k,\alpha}}\leq\C,
\end{equation}
for a constant $C>0$ depending on $k,\alpha$ and $N_2$.

The operator (\ref{eq:Sch-op}) written in real coordinates on $B_R$ is of the form
\[ P_{x,r} u =a^{ij} \frac{\partial^2 u}{\partial x_i \partial x_j}.\]
And it follows from (\ref{eq:bd-Eu}) and (\ref{eq:bd-sym}) that there are constants $\kappa$ and $K$
independent of $x\in Y$ so that $|a^{ij}\xi_i \xi_j|\geq\kappa |\xi|^2$ and $\|a^{ij}\|_{C^{k,\alpha}}\leq K$
for all $i,j =1,\ldots,2n$.

It follows from (\ref{eq:Sch-eq}) and the Schauder interior estimates~\cite[Th. 6.2,\ Th.6.7]{GiTr} that there is a constant
$C>0$, independent of $x\in Y$, so that
\begin{equation}
\|\psi_{x,r}^* \phi|_{B_{\frac{R}{2}}} \|_{C^{k+2,\alpha}} \leq C\left(r^2 \|1-\psi_{x,r}^*(e^f)\|_{C^{k,\alpha}} +\|\psi_{x,r}^* \phi\|_{C^0} \right)
\end{equation}
Let $U_2 =\psi_{x,r}(B_R)$ and $U_1 =\psi_{x,r}(B_{\frac{R}{2}})$.
Using the boundedness of the charts $\varphi_x$ we have, after possibly increasing $C>0$,
\begin{equation}\label{eq:Sch-bd}
\begin{split}
& \sum_{j=0}^{k+2} r^j\|\nabla^j \phi|_{U_1}\|_{C^0} +r^{k+2+\alpha}\left[\nabla^{k+2}\phi|_{U_1}\right]_\alpha \leq \\
 & C \left[\sum_{j=0}^k r^j\|\nabla^j f|_{U_2}\|_{C^0} +r^{k+2+\alpha}\bigl[\nabla^k f|_{U_2}\bigr]_\alpha +\|\phi|_{U_2}\|_{C^0} \right].
\end{split}
\end{equation}
From (\ref{eq:bd-ine}) and $r=\rho(x)^\lambda$ we get  $\|\phi|_{U_2}\|_{C^0} \leq P 2^{-\gamma}\rho(x)^\gamma$,
$\|\nabla^j f|_{U_2}\|_{C^0} \leq N_1 2^{j-\beta}\rho(x)^{\beta -j}$, and
$[\nabla^k f|_{U_2}\bigr]_\alpha \leq N_1 2^{k+\alpha-\beta}\rho(x)^{\beta-k-\alpha}$.
Plugging these into (\ref{eq:Sch-bd}) shows that the right hand side of the inequality is bounded by a constant times $\rho(x)^{\gamma}$.
And the inequalities (\ref{eq:Sch-prop-2}) easily follow.
\end{proof}

The proposition is now proved by repeatedly applying the lemma.  Let $\gamma =-2n+2+\delta$.
Let $p$ be the smallest integer with $-p\gamma >k+\alpha$ and define $\lambda_i =\frac{-i\gamma}{k+\alpha}$ for
$i=0,\ldots,n-1$ and $\lambda_p =1$.  We have already shown that (\ref{eq:Sch-prop-1}) holds for $\lambda_0 =0$.
For the inductive step suppose we have (\ref{eq:Sch-prop-1}) for $\lambda=\lambda_i$.  Then we have (\ref{eq:Sch-prop-2}),
and it is easy to see that this implies (\ref{eq:Sch-prop-1}) for $\lambda_{i+1}$.
Thus we get (\ref{eq:Sch-prop-2}) for $\lambda_p =1$, and this implies the proposition.
\end{proof}

In particular, we have
\begin{equation}
 \| \nabla^j (g-g_0 )\|_{g_0} =O(\rho ^{-2n -j+\delta}),\quad\text{for}\quad j\leq k.
\end{equation}
And this completes the proof of Theorem~\ref{thm:main}.

\begin{remark}
One can remove the ``$\delta$'' in Proposition~\ref{prop:decay} by studying the Laplacian on weighted H\"{o}lder spaces on manifolds
with a conical end.  Thus the Ricci-flat metric decays to the Calabi Ansatz metric $g_0$ as in the ALE case~\cite{Joy}.
The details are in~\cite{vC4}.
\end{remark}

\section{Deformations of Gorenstein toric singularities}\label{sec:deform}

In this section $X_\sigma$ is a Gorenstein affine toric variety associated to the strictly convex rational polyhedral cone
$\sigma\subset\R^{n}$.  As in Section~\ref{subsect:toric}, $X_\sigma$ has a K\"{a}hler cone structure and as such $\sigma$ is precisely
the dual moment cone $\mathcal{C}(\mu)^*$ of (\ref{eq:dual-cone}).  Thus $\sigma$ is the span of the $\lambda_i \in\Z^{n},i=1,\ldots,d$, defining
the moment cone in (\ref{eq:moment-cone}).  And Gorenstein means that Proposition~\ref{prop:CY-cond-toric}
is satisfied with $\gamma\in (\Z^{n})^* =\Hom(\Z^{n},\Z)$.  It follows that $\sigma$ is the cone over a lattice polytope $P\subset\R^{n-1}$.

We will assume that $X_\sigma$ is toric with an isolated Gorenstein singularity.  In this case there is a beautiful description by
K. Altmann~\cite{Alt} of the versal deformation space of $X_\sigma$.  The components of the reduced versal space of deformations
of $X_\sigma$ are given by the possible maximal decompositions of $P$ into Minkowski summands.

We give a summary of Altmann's construction.  Let $N=\Z^{n}$ and $M=\Hom (N,\Z)$.  Then $\sigma$ is a cone over an
integral polytope $P\subset N_{\R}$ in the affine plane $\{\ell =-1$, where $\ell\in M$ is that given in
Proposition~\ref{prop:CY-cond-toric}.  We choose an element $n_0 \in N$ with $\ell(n_0)=-1$, so we can identify
$P$ with $P-n_0$ in the hyperplane $L_{\R} \subset N_{\R}$ where $L=\ell^{\perp} \subset N$.

A \emph{Minkowski decomposition} of $P$ in $L_{\R}$ is collection of integral convex subsets $R_0 ,\ldots,R_p$ of $L$ such
that
\[ P=\{r_0 +\cdots+ r_p :r_i \in R_i \}.\]
We write $P=R_0 +\cdots +R_p$.

A maximal decomposition of $P$ into a Minkowski sum $P=R_0 +\cdots +R_p$ with lattice summands $R_k \subset\R^{n-1}$ corresponds
to a reduced irreducible component $\mathscr{M}_0$ of the versal deformation space $\mathscr{M}$ of $X_\sigma$~\cite{Alt}.
The component $\mathscr{M}_0$ is isomorphic to $\C^{p+1}/{\C\cdot (1,\ldots,1)}$, and the restriction
$\pi: Y\rightarrow\C^p$ of the versal family is described as follows.
Define $N'=L\oplus\Z^{p+1}$, and let $e_0, \ldots e_p$ be the usual basis of
$\Z^{p+1}$.  we have the tautological cone in $N'_{\R}$

\begin{equation}
\tilde{\sigma} :=\cone\left(\bigcup_{k=0}^{p} (R_k \times \{e_k\}\right) \subset\R^{n+p},
\end{equation}
where $\cone(S)$ denotes the cone generated by the set $S\subset N'_{\R}$.
We have that $\tilde{\sigma}$ contains $\sigma =\cone(P\times\{1\})\subset\R^n$ via the diagonal embedding
$\R^n \hookrightarrow\R^{n+p}$ ($(s,1)\mapsto (s;1,\ldots,1)$).  The inclusion $\sigma\subseteq\tilde{\sigma}$
induces an embedding of the affine toric variety $X_\sigma \hookrightarrow Y_{\tilde{\sigma}}$.
The projection $\R^{n+p} \rightarrow\R^{p+1}$ induces a map of toric varieties
$Y_{\tilde{\sigma}}\rightarrow\C^{p+1}$, which provides $p+1$ regular functions $t_0,\ldots,t_p$ on $Y_{\tilde{\sigma}}$.
Then the family $\pi: Y\rightarrow\C^p$ is the composition
$Y_{\tilde{\sigma}}\rightarrow\C^{p+1} \rightarrow \C^{p+1}/{\C\cdot (1,\ldots,1)}$.
Thus $\pi$ is given by $(t_1 -t_0, \ldots, t_p -t_0)$.  Given $\ol{c}=(c_1,\ldots,c_p)\in\C^p$ we will denote
$\pi^{-1}(\ol{c})$ by $X_{\ol{c}}$.

\begin{figure}[tbh]
 \centering
 \includegraphics[scale=0.4]{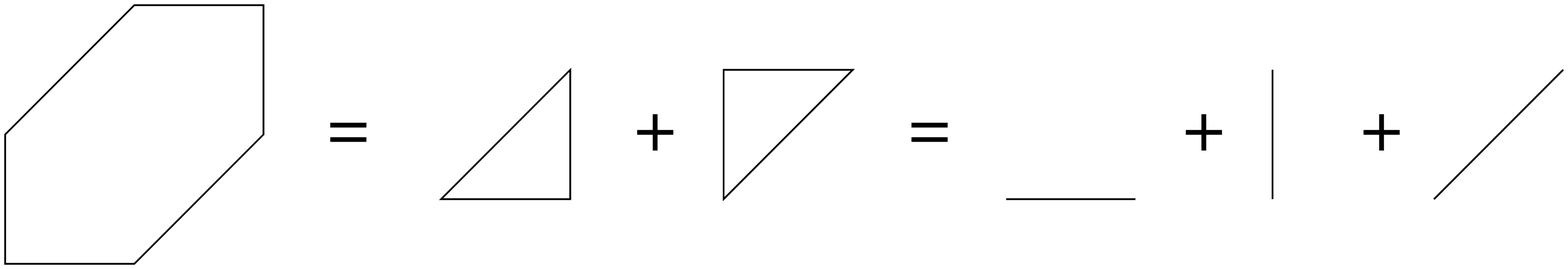}
 \caption{The cone over $\cps^2_{(3)}$}
 \label{fig:Minkow2}
\end{figure}

\begin{figure}[tbh]
 \centering
 \includegraphics[scale=0.4]{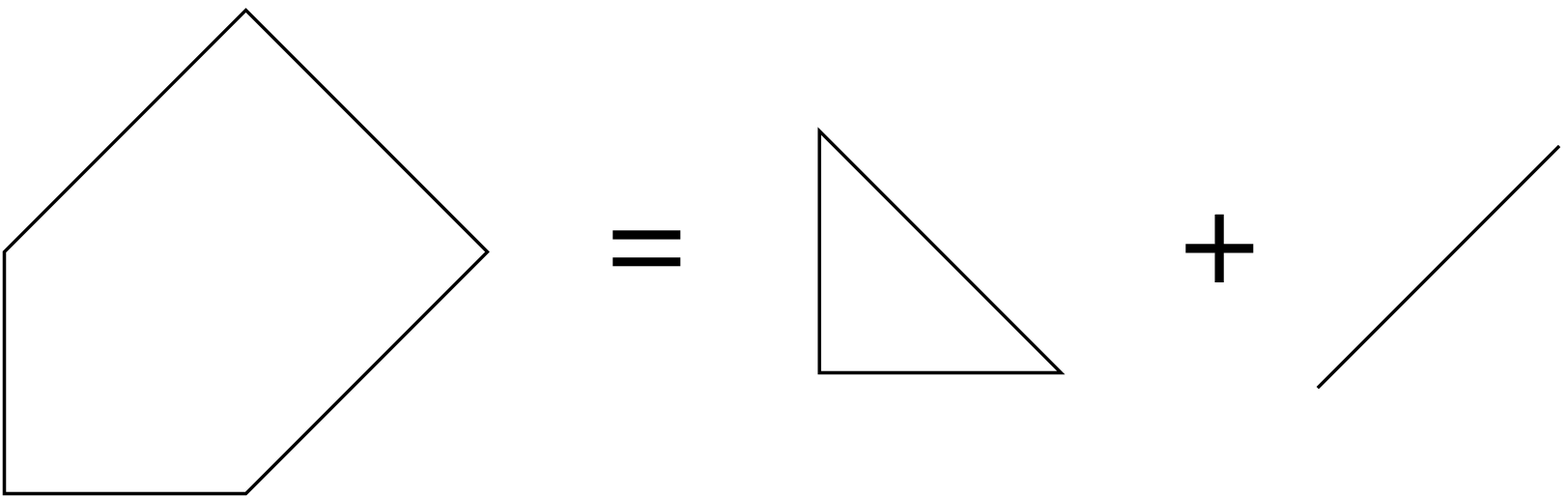}
 \caption{The cone over $\cps^2_{(2)}$}
 \label{fig:Minkow3}
\end{figure}

Figure~\ref{fig:Minkow2} gives the two Minkowski decompositions of the polytope $P$ whose cone $\sigma =\cone(P\times\{1\})$ is the
cone over the degree 6 del Pezzo surface, i.e. $\cps^{(3)}$ the 3 points blow-up.  The polytope giving the cone over
the 2 points blow-up has a single Minkowski decomposition given in Figure~\ref{fig:Minkow3}.

The functions $t_0,\ldots,t_p$ are invariant under the complex $n-1$-torus $T_{\C}(L)\subset T_{\C}(N')$.  Thus
we have a $(\C^*)^{n-1}\cong T_{\C}(L)$ action on $X_{\ol{c}}$.  Note that the cone $\tilde{\sigma}$ is Gorenstein,
since $\ell'\in M'$ with $\ell' =e_0^* +\cdots+ e_p^*$ evaluates to 1 on all the generators.
So there is a nowhere vanishing holomorphic $n+p$-form $\Omega'$ on $Y_{\tilde{\sigma}}$
as in Proposition~\ref{prop:CY-cond-toric}.

Let $\Sigma$ be any fan in $N=\Z^{n}$.  Recall that there a bijective correspondence between $p$-dimensional cones $\sigma\in\Sigma$
and orbits $T_{\sigma} \subset X_{\Sigma}$ of codimension $p$ isomorphic to $T^{n-p}_{\C}$.

\begin{defn}
 We say that a codimension $p$ analytic subvariety $V$ of a toric variety $X_\Sigma$ is $\Sigma$-regular if for
every $\sigma\in\Sigma$ the intersection $V\cap T_{\sigma}$ is a smooth codimension $p$ subvariety.
\end{defn}

\begin{prop}\label{prop:reg}
 A $\Sigma$-regular subvariety of a smooth toric variety $V\subset X_\Sigma$ is smooth.
\end{prop}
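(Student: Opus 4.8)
The plan is to reduce the statement to a pointwise Jacobian criterion and to exploit only the single orbit passing through each point. Fix $x\in V$, and let $T_\sigma$ be the unique orbit of $X_\Sigma$ containing $x$, with $\dim\sigma=k$, so that $\dim T_\sigma=n-k$ where $n=\dim X_\Sigma$. Since $X_\Sigma$ is smooth, $x$ is a smooth point of the ambient space, the Zariski tangent space $T_x X_\Sigma$ has dimension $n$, and because $V$ has pure codimension $p$ we have $\dim_x V=n-p$ and hence $\dim T_x V\ge n-p$. Smoothness of $V$ at $x$ is therefore equivalent to the reverse inequality $\dim T_x V\le n-p$, and I would establish it by producing $p$ elements of the local ideal $\mathcal I_{V,x}$ whose differentials at $x$ are linearly independent in $T_x^*X_\Sigma$.

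To produce these I would use $\Sigma$-regularity at the orbit $T_\sigma$ itself. In suitable local coordinates one has $X_\Sigma\cong\C^k\times(\C^*)^{n-k}$ with $T_\sigma=\{0\}\times(\C^*)^{n-k}$, a smooth locally closed subvariety. The scheme-theoretic intersection $V\cap T_\sigma$ has ideal $\overline{\mathcal I}\subset\mathcal O_{T_\sigma,x}$ equal to the image of $\mathcal I_{V,x}$ under restriction, and the hypothesis that $V\cap T_\sigma$ is smooth of codimension $p$ in $T_\sigma$ says precisely that the differentials $\{\,dg(x)|_{T_x T_\sigma}:g\in\overline{\mathcal I}\,\}$ span a $p$-dimensional subspace of $T_x^*T_\sigma$. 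Hence I can choose $f_1,\dots,f_p\in\mathcal I_{V,x}$ whose restrictions to $T_\sigma$ have differentials $d(f_j|_{T_\sigma})(x)$ that are linearly independent in $T_x^*T_\sigma$.

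Finally I would transfer this independence back to the ambient cotangent space. The restriction map $T_x^*X_\Sigma\to T_x^*T_\sigma$ sends $df_j(x)$ to $d(f_j|_{T_\sigma})(x)$; since the images are independent, the covectors $df_1(x),\dots,df_p(x)$ must themselves be linearly independent in $T_x^*X_\Sigma$. As each $f_j$ lies in $\mathcal I_{V,x}$, it follows that $T_x V\subseteq\bigcap_{j=1}^p\ker df_j(x)$, a subspace of dimension $n-p$, whence $\dim T_x V\le n-p$. Combined with the lower bound this gives $\dim T_x V=n-p=\dim_x V$, so $V$ is smooth at $x$; since $x$ was arbitrary, $V$ is smooth.

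The main obstacle is the correct reading of the regularity hypothesis: the argument requires that $V\cap T_\sigma$ be the scheme-theoretic (equivalently, transversal) intersection and be smooth, so that its ideal is exactly the image of $\mathcal I_{V,x}$ and its tangent space is cut out by differentials of functions that genuinely vanish on $V$. I would therefore make explicit that ``$V$ meets $T_\sigma$ in a smooth codimension $p$ subvariety'' is to be understood as transversality of $V$ and $T_\sigma$ (so the intersection ideal is reduced and equals the restriction of $\mathcal I_V$); under a merely set-theoretic reading one would control only the reduced intersection and the count of independent differentials could drop. The remaining ingredients---that torus orbits are smooth, that smoothness of $X_\Sigma$ makes $x$ an ambient smooth point with $\dim_x V=n-p$, and that restriction of differentials preserves the needed independence---are routine. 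Note that only the orbit through $x$ enters the argument, so it is precisely the validity of $\Sigma$-regularity over \emph{all} $\sigma$ that lets this run at every point of $V$.
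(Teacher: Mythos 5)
Your proof is correct and takes essentially the same route as the paper: the paper's proof also passes to the local model $\mathbf{A}_\sigma=\mathbf{A}_{\sigma,N(\sigma)}\times(\C^*)^{n-r}\cong\C^r\times(\C^*)^{n-r}$ of the smooth toric variety along the orbit $T_\sigma=\{o\}\times(\C^*)^{n-r}$ and then simply invokes the implicit function theorem, which is precisely the Jacobian-criterion step you make explicit by lifting $p$ independent differentials from $T^*_xT_\sigma$ to $T^*_xX_\Sigma$. Your insistence on the scheme-theoretic (transversal) reading of $V\cap T_\sigma$ is also well taken and genuinely necessary --- under a merely set-theoretic reading the statement fails (e.g. $V=\{(y-1)^2=x^2(x+1)\}\subset\C\times\C^*$ meets each orbit in a smooth reduced set of codimension one yet has a node at $(0,1)$) --- and that reading is what the paper tacitly assumes here and actually uses in the application in Proposition~\ref{prop:smooth}, where the orbit slices are complete intersections.
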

\begin{proof}
We denote by $\mathbf{A}_{\sigma}\subset X_\Sigma$ the $n$-dimensional affine toric variety associated to the cone $\sigma\in\Sigma$.
If $\sigma\in\Sigma(r)$ is r-dimensional, let $N_\sigma$ be the r-dimensional sublattice containing $\sigma\cap N$.
Denote by $\mathbf{A}_{\sigma, N(\sigma)}$ the r-dimensional affine toric variety corresponding to $\sigma\subset N(\sigma)$.
Then $\mathbf{A}_\sigma =\mathbf{A}_{\sigma, N(\sigma)} \times (\C^* )^{n-r}$.  The implicit function theorem implies that $V$ is smooth
at every point of $V\cap T_{\sigma} =V\cap \{o\}\times (\C^* )^{n-r} \subset \mathbf{A}_{\sigma, N(\sigma)} \times (\C^* )^{n-r}$.
\end{proof}

\begin{prop}\label{prop:smooth}
 Suppose $Y_{\tilde{\sigma}}$ is smooth besides the one isolated singularity corresponding to the $n+p$-dimensional cone, i.e.
$\tilde{\sigma}$ satisfies (\ref{eq:smooth}).  Then for generic $\ol{c}=(c_1,\ldots,c_p)\in\C^p$, the subvariety
$\pi^{-1}(\ol{c})=:Y_{\ol{c}}\subset Y_{\tilde{\sigma}}$ is smooth.
\end{prop}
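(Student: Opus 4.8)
The plan is to combine two observations: that for $\ol c\neq 0$ the fiber misses the unique singular point of $Y_{\tilde\sigma}$, and that away from that point the notion of $\Sigma$-regularity reduces smoothness to a genericity statement on each torus orbit, which is then settled by generic smoothness.

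First I would locate the singularity precisely. By hypothesis $Y_{\tilde\sigma}$ is smooth except at the distinguished fixed point $x_0$ attached to the full cone $\tilde\sigma$. Since $\tilde\sigma$ is full-dimensional we have $\tilde\sigma^\perp=0$, so every character $\chi^m$ with $0\neq m\in\tilde\sigma^\vee\cap M'$ vanishes at $x_0$; in particular $t_k=\chi^{e_k^*}$ vanishes at $x_0$ for each $k$, whence $t_i-t_0=0$ there. Thus $x_0\in Y_{\ol c}$ forces $\ol c=0$, and for every $\ol c\neq 0$ the fiber $Y_{\ol c}$ is contained in $X_{\mathrm{sm}}:=Y_{\tilde\sigma}\setminus\{x_0\}$, which is smooth by hypothesis and is the toric variety whose fan $\Sigma'$ consists of the proper faces of $\tilde\sigma$.

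Next, by Proposition~\ref{prop:reg} it suffices to show that $Y_{\ol c}$ is $\Sigma'$-regular for generic $\ol c$, i.e. that for each proper face $\tau\prec\tilde\sigma$ the intersection $Y_{\ol c}\cap T_\tau$ is empty or smooth of codimension $p$ in the orbit $T_\tau\cong(\C^*)^{\,n+p-\dim\tau}$. On $T_\tau$ each $t_k$ restricts to a monomial or to $0$, and $Y_{\ol c}\cap T_\tau=\Phi_\tau^{-1}(\ol c)$ for the morphism $\Phi_\tau=(t_1-t_0,\ldots,t_p-t_0)\colon T_\tau\to\C^p$. Since $T_\tau$ is smooth and we work over $\C$, generic smoothness (Sard's theorem in characteristic $0$) furnishes a proper closed set $Z_\tau\subsetneq\C^p$ of critical values so that $\Phi_\tau^{-1}(\ol c)$ is smooth of pure codimension $p$, or empty, for every $\ol c\notin Z_\tau$; the empty case is harmless in Proposition~\ref{prop:reg}, whose proof only tests points actually lying on the intersection.

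Finally I would assemble the genericity: there are finitely many faces, so $V:=\bigl(\C^p\setminus\{0\}\bigr)\setminus\bigcup_{\tau\prec\tilde\sigma}Z_\tau$ is a dense open subset of $\C^p$, and for $\ol c\in V$ the subvariety $Y_{\ol c}$ is $\Sigma'$-regular, hence smooth. Excluding $\ol c=0$ is exactly what keeps $Y_{\ol c}$ away from $x_0$. The step requiring the most care is the orbit-wise genericity: one must verify that the restricted monomials make $\Phi_\tau$ a morphism of the expected relative dimension so that its generic fibers have pure codimension $p$, and must correctly handle the orbits on which some $t_k$ restricts to $0$, where $\Phi_\tau$ may fail to be dominant and the generic fiber is simply empty. (Alternatively, one could bypass $\Sigma$-regularity and apply generic smoothness directly to the restriction $\pi\colon X_{\mathrm{sm}}\to\C^p$, but the combinatorial route fits the toric setting and makes the codimension bookkeeping transparent.)
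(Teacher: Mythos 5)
Your proof is correct, and its overall skeleton coincides with the paper's: both arguments analyze the fiber orbit by orbit over the faces of $\tilde{\sigma}$ and conclude with Proposition~\ref{prop:reg}. The difference is in how each orbit is treated. The paper runs an explicit case analysis on how many of the $t_k$ vanish identically on $T_\tau$ (equivalently, how many $e_k^*$ fail to lie in $\tau^{\perp}$): if two or more vanish, then $t_{k_1}-t_{k_2}$ vanishes on $T_\tau$ and the fiber misses that orbit as soon as $c_{k_1}\neq c_{k_2}$; otherwise all but at most one $t_k$ restrict to nonvanishing characters, and the resulting explicit equations are asserted to cut out a complete intersection for sufficiently general $\ol{c}$. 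You replace this dichotomy with a single appeal to generic smoothness for $\Phi_\tau=\pi|_{T_\tau}$, which uniformly gives that generic fibers are empty or smooth of pure codimension $p$. Each route has its advantages: the paper's computation yields sharper, explicit information — in particular that for generic $\ol{c}$ the fiber misses every orbit with $\dim T_\tau<p$, a fact the paper uses immediately after the proposition and again when arguing that the divisors $\{f_i=c_i\}$ meet transversally and contain no invariant divisors — together with concrete inequalities (such as $c_{k_1}\neq c_{k_2}$) carving out the good set of parameters; your route, by contrast, turns the paper's one-line genericity assertion ("complete intersection for sufficiently general $c$") into a consequence of a standard theorem, so it is arguably tighter at that step. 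You also handle explicitly a point the paper glosses over: Proposition~\ref{prop:reg} requires the ambient toric variety to be smooth, while $Y_{\tilde{\sigma}}$ is singular at the fixed point $x_0$ of the full cone. Your observation that every $t_k$ vanishes at $x_0$, so that $Y_{\ol{c}}$ avoids $x_0$ for \emph{every} $\ol{c}\neq 0$ and sits inside the smooth toric variety of the fan of proper faces of $\tilde{\sigma}$, repairs this cleanly; the paper achieves the same thing only implicitly, since the singular orbit falls into its "two or more $t_k$ vanish" case. Your closing remark about the empty intersections is also apt: both your argument and the paper's tacitly allow $Y_{\ol{c}}\cap T_\tau=\emptyset$ in the definition of $\Sigma$-regularity, which is harmless because the proof of Proposition~\ref{prop:reg} only tests points lying on the intersection.
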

\begin{proof}
Let $e^*_k \in\tilde{\sigma}^\vee$ be the extension of the standard dual basis element on $\R^{p+1}$.  For $\tau\in\tilde{\sigma}$,
$t_k$ vanishes on $T_{\tau}$ unless $e^*_k \in\tau^{\perp}$.  Suppose $e^*_{k_1},e^*_{k_2}\notin\tau^{\perp}$.  Then $t_{k_1} -t_{k_2}$
vanishes on $T_{\tau}$.  So $Y_{\ol{c}} \cap T_{\tau} =\emptyset$ if $c_{k_1} -c_{k_2} \neq 0$.
Thus we may assume that only one $e^*_k \notin\tau^{\perp}$.  Then the
$t_1 -t_0 =c_1,\ldots, t_{k-1} -t_{0}=c_{k-1}, -t_0 =c_k, t_{k+1} -t_{0} =c_{k+1},\ldots ,t_p -t_0 =c_p$ define $Y_{\ol{c}} \cap T_{\tau}$
which is a complete intersection for sufficiently general $c_1,\ldots,c_p$.
The result then follows from Proposition~\ref{prop:reg}.
\end{proof}
Notice that for generic $\ol{c}\in\C^p$, as in the proposition, $Y_{\ol{c}} \cap T_\tau =\emptyset$ if $\dim T_\tau <p$.

Supposing that $X_{\ol{c}}\subset Y_{\tilde{\sigma}}$ is smooth, adjunction gives a nowhere vanishing $n$-form
$\Omega$ on $X_{\ol{c}}$.  In order to prove Corollary~\ref{cor:main} we will compactify $Y_{\tilde{\sigma}}$.
Assume that $n_0 \in\inter(P)\cap L$, and define $\tau =-n_0 -e_0 -\cdots -e_p$.
We define a new fan $\Sigma$ in $N'$.  For each face $\mathcal{F}$ of $\tilde{\sigma}$ spanned by
$\beta_1 ,\ldots, \beta_r$ $\Sigma$ contains the cone
$\R_{\geq 0} \tau +\R_{\geq 0} \beta_1 +\cdots +\R_{\geq 0} \beta_r$.  If the fan $\tilde{\sigma}(n+p-1)$,
ignoring the $n+p$-dimensional cone, is simplicial, then $\Sigma$ is simplicial apart for the $n+p$-cone
spanned by $\tilde{\sigma}$.  Thus apart from the singular point corresponding to this cone,
$\ol{Y}_{\tilde{\sigma}}:= Y_\Sigma$ has an orbifold structure.  And $\ol{Y}_{\tilde{\sigma}}$ is a compactification
of $Y_{\tilde{\sigma}}$ by adding a divisor $D_\tau$ at infinity.  Even if $\Sigma$ is not simplicial
$D_\tau$ is $\Q$-Cartier.
Similarly, we define $\ol{X}_\sigma$ to be the variety constructed from $X_\sigma$ as above by adding $-n_0 \in N$.
Then $\ol{X}_\sigma$ is a compactification which is an orbifold apart from the singular point given
by the $n$-cone of $\sigma$.  We have the embedding
$\ol{X}_\sigma \hookrightarrow \ol{Y}_{\tilde{\sigma}}$.  And we define
$\ol{X}_{\ol{c}} \subset\ol{Y}_{\tilde{\sigma}}$ to be the closure of $X_{\ol{c}}\subset\ol{Y}_{\tilde{\sigma}}$.

The regular functions $f_1 =t_1 -t_0 ,\ldots,f_p =t_p -t_0$ on $Y_{\tilde{\sigma}}$ extend to
rational functions on $\ol{Y}_{\tilde{\sigma}}$.  Suppose that Proposition~\ref{prop:smooth} is
satisfied for $\ol{c}\in\C^p$.  Then the Weil divisors $\{f_i =c_i\}\subset Y_{\tilde{\sigma}}, i=1,\ldots,p$
intersect transversally.  Let $D_i$ be the Weil divisor which is the closure of $\{f_i =c_i\}$ in
$\ol{Y}_{\tilde{\sigma}}$.  Since $f_i -c_i$ has a pole along $D_\tau$ and for generic $\ol{c}$
$\{f_i =c_i\}$ contains no other $T_{\C}(N')$-invariant divisors, we have
$D_\tau =D_i -(f_i -c_i)$.  The holomorphic $n+p$-form $\Omega'$ on $\ol{Y}_{\tilde{\sigma}}$ has a
pole of order $p+2$ along $D_{\tau}$.  By applying the adjunction formula p times
we get a holomorphic n-form $\Omega$ on $\ol{X}_{\ol{c}}$ with a pole of order 2 along
$D_\tau \cap\ol{X}_{\ol{c}}$.  It is also not difficult to see that
$D:=D_\tau \cap\ol{X}_{\ol{c}}=D_\tau \cap\ol{X}_\sigma$.

\begin{prop}
Suppose that $X_{\ol{c}}$ is smooth.  Then $\ol{X}_{\ol{c}}$ is an orbifold.  And we have
$\mathbf{K}_{\ol{X}_{\ol{c}}} =[-2D]$.  If $n=3$ then condition (\ref{cond}) holds for $D\subset\ol{X}_{\ol{c}}$.
\end{prop}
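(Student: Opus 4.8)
The plan is to establish the three assertions---that $\ol{X}_{\ol{c}}$ is an orbifold, that $\mathbf{K}_{\ol{X}_{\ol{c}}}=[-2D]$, and (for $n=3$) that condition~(\ref{cond}) holds---in this order, since each relies on the previous. Throughout I would exploit that the only new behaviour occurs along the divisor $D$ at infinity, where $\ol{X}_{\ol{c}}$ meets the boundary $D_\tau$ of the ambient orbifold $\ol{Y}_{\tilde{\sigma}}$.

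For the orbifold claim I would first observe that, since $X_{\ol{c}}=\pi^{-1}(\ol{c})$ is smooth, it avoids the singular apex of $\ol{Y}_{\tilde{\sigma}}$ corresponding to the full cone $\tilde{\sigma}$: at that torus-fixed point all $t_i$ vanish, so $t_i-t_0=0\neq c_i$ for generic $\ol{c}$. Hence $\ol{X}_{\ol{c}}$ lies entirely in the simplicial, and therefore orbifold, locus of $\ol{Y}_{\tilde{\sigma}}$. It then remains to show $\ol{X}_{\ol{c}}$ inherits an orbifold structure, and I would do this by the $\Sigma$-regularity argument of Propositions~\ref{prop:reg} and~\ref{prop:smooth} carried out in the uniformizing charts. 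Each simplicial cone of $\Sigma$ gives a chart $\C^{n+p}/G$; lifting the defining equations $f_i=c_i$ to the smooth cover $\C^{n+p}$ and checking, exactly as in Proposition~\ref{prop:smooth} but now including the orbits through $\tau$, that for generic $\ol{c}$ the lift meets every orbit transversally, the implicit function theorem yields a smooth $G$-invariant preimage. The resulting charts exhibit $\ol{X}_{\ol{c}}$ as an orbifold.

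The canonical bundle is then immediate. The form $\Omega$ produced above by $p$-fold adjunction is a meromorphic section of the orbifold bundle $\mathbf{K}_{\ol{X}_{\ol{c}}}$ which is nowhere zero on $X_{\ol{c}}=\ol{X}_{\ol{c}}\setminus D$ and has a pole of order exactly $2$ along $D$; its divisor is thus $-2D$, so $\mathbf{K}_{\ol{X}_{\ol{c}}}=[-2D]$ as Baily line bundles. In particular $-K_{\ol{X}_{\ol{c}}}=2D$, so the hypotheses of Theorem~\ref{thm:main} hold with $\alpha=2$.

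Finally, for $n=3$ I would verify~(\ref{cond}) by reducing to Proposition~\ref{prop:cond}. The point is that $D=D_\tau\cap\ol{X}_{\ol{c}}=D_\tau\cap\ol{X}_\sigma$ is the toric surface of the polygon $P$, and that $\alpha=2$ puts us in the branch $\alpha\leq 2$ with $\dim\ol{X}_{\ol{c}}=3$. Reprising that proof, the sequence $0\to\Theta_D\to\Theta_{\ol{X}_{\ol{c}}}|_D\to\mathcal{O}([D]|_D)\to 0$ reduces the vanishing of $H^1(D,\Theta_{\ol{X}_{\ol{c}}}(-kD)|_D)$ to that of $H^1(D,\Theta_D(-kD))$ and $H^1(D,\mathcal{O}((1-k)D))$. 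By Serre duality the first is $H^1(D,\Omega^1_D((k-1)D))^*$, which vanishes by Bott vanishing because $(k-1)[D]|_D=(k-1)(-K_D)$ is positive for $k\geq 2$; the second vanishes by Kodaira vanishing since $(1-k)[D]|_D$ is negative. The main obstacle is that $D$ and $\ol{X}_{\ol{c}}$ are only orbifolds, so one must invoke the V-bundle versions of the adjunction sequence, of Serre duality, and of the Bott and Kodaira vanishing theorems; since $D$ is a toric orbifold surface whose normal bundle $-K_D$ is ample, these apply and the argument goes through verbatim. Only $n=3$ is treated because the singularity is rigid for $n\geq 4$.
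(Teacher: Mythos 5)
Your proposal is correct in substance and, for the first two claims, follows the same path the paper takes (the paper actually disposes of the orbifold structure and of $\mathbf{K}_{\ol{X}_{\ol{c}}}=[-2D]$ in the discussion preceding the proposition, via the simplicial fan $\Sigma$ and the pole order of the adjunction form $\Omega$; the proof proper is devoted entirely to condition (\ref{cond})). The genuine divergence is in the key cohomological step. Both you and the paper reduce, via the sequence $0\to\Theta_D\to\Theta_{\ol{X}_{\ol{c}}}|_D\to\mathcal{O}([D]|_D)\to 0$ and Serre duality, to the two vanishings $H^1(D,\Omega^1((k-1)D))=0$ and $H^1(D,\mathcal{O}((1-k)D))=0$ for $k\geq 2$, and both settle the second by negativity of $[(1-k)D]$ (Baily's orbifold Kodaira vanishing). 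For the first, you invoke a V-bundle version of Bott vanishing on the toric orbifold surface $D$. The paper deliberately avoids this: its opening sentence is that Proposition~\ref{prop:cond} --- whose proof cites Bott vanishing from Oda, stated there for \emph{smooth} toric varieties --- ``is not applicable as $\ol{X}_{\ol{c}}$ and $D$ are orbifolds,'' and it instead proves the vanishing by hand, taking the toric Euler sequence $0\to\Omega^1_D\to\mathcal{O}_D\oplus\mathcal{O}_D\to\bigoplus_{i=1}^d\mathcal{O}_{C_i}\to 0$ (with $\sum_i C_i=-K_D$ the toric boundary), twisting by the negative bundle $\mathbf{E}=[(1-k)D]|_D$, and combining Kodaira vanishing with the Serre duality $H^1(D,\Omega^1((k-1)D))\cong H^1(D,\Omega^1(\mathbf{E}))$. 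Your route is shorter, but it rests on exactly the citation the paper treats as unavailable: Bott--Steenbrink--Danilov vanishing does extend to complete simplicial toric varieties (Danilov, Batyrev--Cox, Musta\c{t}\u{a}, Fujino), so your argument can be made rigorous, but the statement needed --- for orbifold sheaves twisted by an ample $\Q$-Cartier, not necessarily Cartier, divisor class --- is not in the paper's references and is precisely the delicate point; the paper's Euler-sequence argument buys self-containedness, using only tools already in play, at the cost of being special to the surface case $n=3$ (which is all that is needed, by Altmann's rigidity in dimension $\geq 4$). Two minor points: in your orbifold argument, ``generic $\ol{c}$'' can be replaced by $\ol{c}\neq 0$, which is forced by the hypothesis that $X_{\ol{c}}$ is smooth since $X_0=X_\sigma$ is singular; and transversality of $\ol{X}_{\ol{c}}$ with the toric strata at infinity is what the hypothesis of Proposition~\ref{prop:smooth} supplies, so it is worth flagging that the smoothness assumption on the affine part alone does not formally give it.
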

\begin{proof}
Proposition~\ref{prop:cond}
is not applicable as $\ol{X}_{\ol{c}}$ and $D$ are orbifolds.  But condition (\ref{cond}) does hold, where the sheaves are
coherent sheaves of orbifold bundles.  By the argument in Proposition~\ref{prop:cond} it is sufficient to prove that
$H^1(D,\Omega^1((k-1)D))=0$ and $H^1(D,\mathcal{O}((1-k)D))=0$ for all $k\geq 2$.  The second holds by the negativity
of $[(1-k)D]$.  Let $\sum_{i=1}^d C_i$, be the anti-canonical divisor of $D$.  Consider the exact sequence
\begin{equation}
0\rightarrow\Omega^1_D \rightarrow\mathcal{O}_D \oplus\mathcal{O}_D \rightarrow\bigoplus_{i=1}^d \mathcal{O}_{C_i}\rightarrow 0,
\end{equation}
where $\Omega^1_D$ is the sheaf of sections of the orbifold bundle of holomorphic 1-forms.  Then tensor with
$\mathbf{E}=[(1-k)D]|_D$, to get
\begin{equation}
0\rightarrow\Omega^1_D (\mathbf{E})\rightarrow\mathcal{O}_D(\mathbf{E}) \oplus\mathcal{O}_D (\mathbf{E})\rightarrow\bigoplus_{i=1}^d \mathcal{O}_{C_i}(\mathbf{E})\rightarrow 0.
\end{equation}
Since $\mathbf{E}$ is negative, Kodaira vanishing and the cohomology sequence gives
\begin{equation}
H^1(D,\Omega^1((k-1)D))=H^1(D,\Omega^1 (\mathbf{E}))=0,
\end{equation}
where the first equality is Serre duality.
\end{proof}

It is proved in~\cite{Alt} that isolated Gorenstein singularities are rigid for $n\geq 4$.
We assumed that there is an $n_0 \in\inter(P)\cap N$.  The only isolated Gorenstein singularity with
$n\leq 3$ for which this is not the case is the quadric cone $X=\{(V,W,Y,Z)\in\C^4 :VW-YZ=0\}$.
And $X$ admits a one dimensional versal space of deformations (cf.~\cite{Tjur}),
$X_\epsilon =\{(V,W,Y,Z)\in\C^4 :VW-YZ=\epsilon \}$.  For $\epsilon\neq 0$, $X_\epsilon$ is smooth and diffeomorphic
to $T^* S^3$.  In fact, with the symplectic structure as a submanifold of $\C^4$, $X_\epsilon$ is symplectomorphic to
$T^* S^3$ with the canonical symplectic structure.  The complex structure is that given as the complexification $SL(2,\C)$ of $S^3$.
In~\cite{Ste} a Ricci-flat K\"{a}hler metric is constructed on $X_\epsilon$ which is asymptotic to an obvious cone metric
on $T^* S^3$ as in (\ref{eq:asymp}) with exponent $-3$ rather than $-2n+\delta$.

For $n=2$ all toric Gorenstein singularities $X$ are cyclic quotients of $\C^2$ by $\Z_{k+1} \subset SL(2,\C)$,
where $\Z_{k+1}$ is generated by $(\alpha, \alpha^k)$ with $\alpha$ a primitive $k+1$-th root of unity.
Thus it is the rational double point $A_k$ which we denote $X_k$.  Recall
$X_k =\{(x,y,z)\in\C^3 : xy+z^{k+1} =0\}.$   The versal deformation space of $X_k$ is easy to construct~\cite{Tjur}.
The versal deformation space of $X_k$ is the subspace
$Y_k\subset\C^3 \times\C^k(t_1,\ldots,t_k)$ defined by the equation
\begin{equation}
F=xy+z^{k+1} +t_1 z^{k-1} +\cdots +t_{k-1}z +t_k =0.
\end{equation}
And the projection $\pi: Y_k \rightarrow \C^k(t_1,\ldots,t_k)$ is the restriction of the obvious projection
$Y_k\subset\C^3 \times\C^k(t_1,\ldots,t_k)\rightarrow\C^k(t_1,\ldots,t_k)$.
We compactify $X_k(t_1,\ldots,t_k)=\pi^{-1}(t_1,\ldots,t_k)$ as follows.
For $k$ even, consider the weighted projective space $\cps^3(k+1,k+1,2,1)$ with homogeneous coordinates
$[x:y:z:s]$ and weights $\mathbf{w}=(w_0,w_1,w_2,w_3)=(k+1,k+1,2,1)$.  Then define
$\ol{X}_k(t_1,\ldots,t_k)\subset\cps^3(k+1,k+1,2,1)$ to be the hyperpersurface
\begin{equation}\label{eq:weig-comp}
f(x,y,z,s)=xy +z^{k+1} +t_1 z^{k-1}s^4 +t_2 z^{k-2} s^6 +\cdots +t_{k-1}zs^{2k} +t_k s^{2k+2}=0.
\end{equation}
Since $f$ is weighted homogeneous of degree $d=2k+2$ and $\sum_i w_i -d=3>0$, we have that $\mathbf{K}^{-1}_{\ol{X}}>0$.
If we denote $D=\{s=0\}\cap \ol{X}$, then $-K_{\ol{X}} =3[D]$.  And $D$ is $\cps^1$ with two antipodal orbifold points of
degree $k+1$.  Thus $D$ admits a K\"{a}hler-Einstein metric, i.e. the $\Z_{k+1}$ quotient of the constant curvature metric.
When $X_k(t_1,\ldots, t_k)$ is smooth, Theorem~\ref{thm:C-Y} is applicable and gives the required metric.
The case with $k$ odd is similar.  Just use the weights $\mathbf{w}=(\frac{k+1}{2},\frac{k+1}{2},1,1)$ and (\ref{eq:weig-comp})
with the necessary changes.  This completes the proof of Corollary~\ref{cor:main}.

Suppose that a toric Gorenstein singularity $X$, with $\dim_{\C} X=3$, admits a smoothing which we
denote $\hat{X}$.  It was proved in~\cite{vC2,vC3} that $X$ also has a resolution $\check{X}$ which
admits a Ricci-flat K\"{a}hler metric asymptotic to a cone metric.
Thus we have two smooth Calabi-Yau varieties $\check{X}$ and $\hat{X}$ which admit Ricci-flat K\"{a}hler metrics.
We can transition $\check{X}\leadsto X$ by shrinking the exceptional divisor to a point.  And we can deform
$X\leadsto\hat{X}$.  This is a \emph{geometric transition} analogous to that given by the small resolution
and smoothing of the conifold $\{(U,V,X,Y)\in\C^4 : UV-XY=0\}$ which has been investigated in~\cite{CanOs}
and elsewhere.  See~\cite{Ros} for more on geometric transitions
in the study of Calabi-Yau manifolds and their relevance to physics.  Since
the resolved space $\check{X}$ is toric, and also the smoothing $\hat{Y}$, when it exists,
admits a $(\C^*)^{n-1}$ action (cf.~\cite{Alt}),
it follows from the work of M. Gross~\cite{Gro} that both $\check{X}$ and $\hat{X}$ admit special Lagrangian
fibrations.  Thus this phenomenon should be of interest in the Strominger-Yau-Zaslow conjecture and mirror symmetry.

Though a toric Gorenstein singularity $X$, with $\dim_{\C} X =3$, always admits a resolution $\check{X}$ with
a Ricci-flat K\"{a}hler metric, a smoothing $\hat{X}$ may not exist.  For example, if as in
Example~\ref{xpl:toric} one takes $M=\cps^2_{(1)}$, then the total space
$\check{Y}=\mathbf{K}_M$ is a resolution of the cone $X=\mathbf{K}^\times _M \cup\{o\}$.  And $\check{Y}$
admits a Ricci-flat K\"{a}hler metric.  But it is proved in~\cite{Alt} that the cone $X$ is rigid.

\section{Examples}\label{sec:examp}

\begin{xpl}\label{xpl:toric}
Let $M$ be a toric Fano manifold.  Let $X=\mathbb{P}(\mathbf{K}_M \oplus\underline{\C})$.
If $D\subset X$ is the $\infty$-section of $\mathbf{K}_M$, then $2[D]=-K_X$.  Note that $X$ is not a Fano manifold,
but $D$ is a good divisor as in Definition~\ref{defn:good-div}.   The arguments in Proposition~\ref{prop:cond}
show that condition (\ref{cond}) holds.  But this is immaterial as it is clear that the normal bundle $N_D$ is
biholomorphic to a neighborhood of $D$ in $X$ in a obvious way.
Theorem~\ref{thm:FOW} implies the $U(1)$-subbundle $S\subset\mathbf{K}_M$ admits a Sasaki-Einstein structure.
And Theorem~\ref{thm:main} implies that $\mathbf{K}_M$ admits a complete Ricci-flat
K\"{a}hler metric which converges to the Calabi ansatz at infinity.

Of course, if $M$ admits a K\"{a}hler-Einstein metric then the Calabi ansatz~\cite{Cal} constructs a complete
Ricci-flat K\"{a}hler metric on $\mathbf{K}_M$ as in section~\ref{subsect:calabi} which is explicit up to the
K\"{a}hler-Einstein metric on $M$.
The problem of the existence of a K\"{a}hler-Einstein metric on a toric Fano manifold was solved in~\cite{WaZh}, where
it was proved that the only obstruction is the Futaki invariant.
We saw that the Calabi ansatz always constructs a Ricci-flat metric in a neighborhood of infinity on $\mathbf{K}_M$.
But the author does not believe that this metric extends smoothly across the zero section in the case when $M$ does not
admit a K\"{a}hler-Einstein metric.

In particular, suppose $M=\cps^2_{(2)}$ the two-points blow-up.  Then the total space $Y=\mathbf{K}_M$ admits a complete Ricci-flat
K\"{a}hler metric which converges to the Calabi ansatz at infinity.  Note that $Y$ is also a toric variety.
Thus as an analytic variety $Y$ is described by a fan $\Delta$ in $\R^3$ which in this situation is a cone over a triagulated polytope in
$P\subset\R^2 =\{(x_1 ,x_2 ,x_3)\in\R^3 :x_3=1\}$.  The polytope $P$ and its triangulation are shown in Figure~\ref{fig:can-bund}.
The cone over $P$ is just the dual cone $\mathcal{C}^\vee$ to the moment polytope of $\mathbf{K}_M^\times =C(S)$ where
$S$ is the Sasaki-Einstein manifold of Example~\ref{xpl:FOW}.  Note that collapsing the zero section of $\mathbf{K}_M$ gives a morphism
$\pi:Y\rightarrow C(S)\cup\{o\}$.

\begin{figure}[tbh]
 \centering
 \includegraphics[scale=0.4]{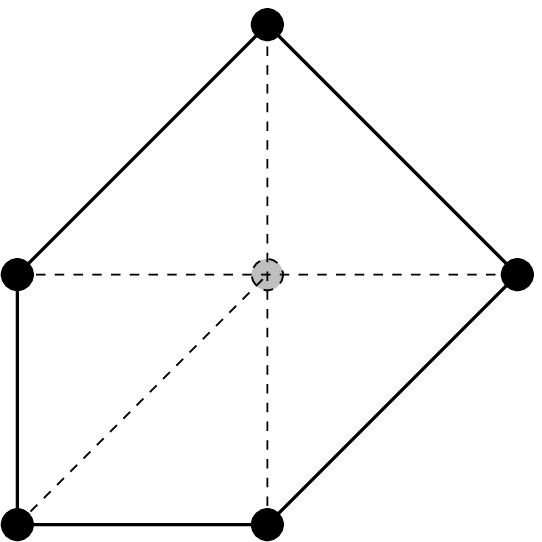}
 \caption{Canonical bundle of $\cps^2_{(2)}$}
 \label{fig:can-bund}
\end{figure}

\end{xpl}

\begin{xpl}\label{xpl:defo}
Consider the Fano 3-fold $X=\mathbb{P}(\mathcal{O}_{\cps^2}(1)\oplus\underline{\C}) =\cps^3_{(1)}$, the one-point blow-up of
$\cps^3$.  This is $V_7$ in the classification of Fano 3-folds of V. A. Iskovskikh~\cite{PS}.
Then $\ind(X)=2$, and there exists a smooth subvariety $D\subset X$ with $2D=-K_X$.
Since $-K_X^3 =56$,  one easily sees that $K_D^2 =7$.  But since $D$ is a del Pezzo surface, we must have
$D=\cps_{(2)}^2$, the two-points blow-up.  Then Proposition~\ref{prop:cond} implies that condition (\ref{cond})
is satisfies and by Theorem~\ref{thm:main} $Y=X\setminus D$ admits a complete Ricci-flat K\"{a}hler metric.

More explicitly, if $\pi:\cps^3_{(1)} \rightarrow\cps^3$ is the blow-down collapsing the exceptional divisor $E$ to $p\in\cps^3$,
then $-K_X =4\pi^* H -2E$.  We have $D= 2\pi^* H -E$.  Then $D$ can be represented by the strict transform of a smooth quadric surface
through $p$.  So $D=\cps^1 \times\cps^1_{(1)} =\cps_{(2)}^2$.

Notice that the end of $Y$ is diffeomorphic to a cone over the $U(1)$-bundle  $M\subset\mathbf{K}_D$, and
$M$ is diffeomorphic to $(S^2\times S^3)\# (S^2\times S^3)$ by the S. Smale's classification
of smooth simply connected spin 5-manifolds.
This is the only example of a smooth Fano 3-fold $X$ with smooth divisor $D\subset X$, $\alpha D=-K_X$, for which
$D$ does not admit a K\"{a}hler-Einstein metric.
It is not difficult to show that $Y$ is simply connected, and the Betti numbers are $b_0(Y)=b_2(Y)=b_3(Y)=1$ with the rest zero.

The topology of $Y$ can be described in more detail.
The compliment of a smooth quadric $Q\subset\cps^3$ is diffeomorphic to $T^* S^3/\Z_2$, the quotient of the
cotangent bundle of $S^3$ where $\Z_2$ acts by the antipodal map on $S^3$.  This is easily seen to be
$T^*\rps^3$.  Let $W$ be the unit disk bundle in $T^* \rps^3$.  This is the trivial bundle and
$\partial W =\rps^3 \times S^2$.  Take a smooth $S^1$ in $\partial W$ generating $\pi_1(\partial W)$.
It has a neighborhood $N\cong S^1 \times B^4$ in $\partial W$.  Then
$\partial (\partial W\setminus N)= S^1 \times S^3$.  Now glue $B^2 \times B^4$ to $W\setminus N$ along
$S^1 \times S^3\subset B^2 \times B^4$.  We have $Y \cong (W\setminus N)\cup_{S^1 \times S^3} B^2 \times B^4$.
It follows that $Y$ is homotopy equivalent to $\rps^3 \cup_{f}B^2$, where $f: S^1 \rightarrow\rps^3$
generates $\pi_1(\rps^3)$.  If $X$ denotes the cone over $\cps^2_{(2)}$, i.e. the toric variety given by a fan
which is the cone over the polytope $P$ in Figure~\ref{fig:can-bund}, then topologically we have replaced the vertex
with $\rps^3 \cup_{f}B^2$.

This example is a smoothing $\hat{X}$ of the toric Gorenstein variety $X$ in Example~\ref{xpl:FOW} as discussed in
Section~\ref{sec:deform}.  This can be described geometrically.  The divisor $D= 2\pi^* H -E$ on $\cps^3_{(1)}$ is very ample.
This follows from the
result that an ample divisor on a non-singular toric variety is very ample~\cite[Theorem 2.22]{Oda}.  And
$H^0(\cps^3_{(1)},\mathcal{O}(D))$ is spanned by 9 sections.  Thus we have an embedding
$\iota_{[D]}:\cps^3_{(1)}\rightarrow \cps^8$.  Let $V\subset\C^9$ be the cone over this variety.
If $H\subset\C^9$ is a generic hyperplane through $o\in\C^9$, then $X=H\cap V$.
Generically deform $H$ away from $o\in\C^9$, then $H\cap V =\hat{X}$.  This deformation of $X$ is
given by the Minkowski decomposition of $P$ in Figure~\ref{fig:Minkow3}.
\end{xpl}

Examples~\ref{xpl:toric} and~\ref{xpl:defo} provide an example of a geometric transition.
Let $X=C(S)\cup\{o\}$ be the Ricci-flat toric K\"{a}hler cone which  is given by the fan
which is the cone over the polytope $P$ in Figure~\ref{fig:can-bund}.  Let $\check{X}$ denote the toric
resolution of Example~\ref{xpl:toric}, and $\hat{X}$ the smoothing of $X$ of Example~\ref{xpl:defo}.
We can transition $\check{X}\leadsto X$ by shrinking the exceptional $\cps_{(2)}^2$ to a point.
And then we can deform $X\leadsto\hat{X}$.

\begin{xpl}\label{xpl:sym-def}
We consider a series of examples of toric singularities with a symmetry property.  The symmetry
ensures that the polytope $P$ is a Minkowski sum with a large number of components.  These examples
arose in the author's investigation of submanifolds of toric 3-Sasakian manifolds~\cite{vC,vC1}.

Let $P\subset\R^2$ be an integral polytope invariant under the antipodal map
$(x_1 ,x_2)\mapsto(-x_1 ,-x_2)$.  Then considering $P\subset \R^2 \times \{(0,0,1)\}\subset\R^3$,
$\sigma =\cone(P)$.  We require that $X_{\sigma}$ has only an isolated singularity.
The variety $X_\sigma \setminus\{o\}$, minus the singular point, has $\pi_1 (X_\sigma \setminus\{o\})=e,\Z_2$.
If $\pi_1 (X_\sigma \setminus\{o\})=\Z_2$,
then take the sublattice $N'\subset\Z^3$ generated by the $\{(x_1 ,x_2, 1)\}$ where $(x_1 ,x_2)$
is a vertex of $P$.  Denote by $\sigma'$ the cone $\sigma$ as a cone in $N'$.  Then
$X_{\sigma'} \setminus\{o\}$ is simply connected and $\sigma'$ has the essential property used below.
It was shown in~\cite{vC,vC1}, by considering submanifolds of toric 3-Sasakian 7-manifolds, how to produce
infinite families of $\sigma'$ with this property.

Let $d_0, \ldots, d_k \in\Z^2$ be the edge vectors of $P$ taken counterclockwise around $P$.
Thus if $\lambda_0,\cdots,\lambda_k$ span $\sigma'$, then $d_j =\lambda_j -\lambda_{j-1}$.
These are primitive, since $X_{\sigma'}$ has only an isolated singularity.
Then $k+1 =2(p+1)$ and $d_{j+p+1} =-d_j$, where indices are taken $\mod k+1$.  It follows that
the elements $d_0,\ldots, d_p$ give a Minkowski decomposition of $P$ (cf.~\cite{Alt}, \S 2).
More precisely, if $R_j \subset\R^2$ is the segment spanned by $(0,0)$ and $d_j$, then
\begin{equation}
P=R_0 +\cdots +R_p.
\end{equation}

\begin{prop}
With $\sigma$ as above, the tautological cone $\tilde{\sigma}$ satisfies the smoothness condition
(\ref{eq:smooth}), i.e. $Y_{\tilde{\sigma}}$ has only an isolated singularity.
\end{prop}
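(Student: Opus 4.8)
The plan is to verify \eqref{eq:smooth} by showing that every proper face of $\tilde{\sigma}$ is a smooth (hence simplicial) cone, so that the only singular point of $Y_{\tilde{\sigma}}$ is the torus-fixed point attached to $\tilde{\sigma}$ itself. Since each summand $R_k$ is the primitive segment from $0$ to $d_k$, the ray generators of $\tilde{\sigma}$ are exactly $a_k:=(0,e_k)$ and $b_k:=(d_k,e_k)$ for $k=0,\dots,p$, and the Gorenstein functional $\ell'=e_0^*+\cdots+e_p^*$ is identically $1$ on all of them. The key preliminary reduction is that every proper face of a pointed cone is contained in a facet, and a face of a smooth cone is again smooth (a subset of a $\Z$-basis is part of a $\Z$-basis); consequently it suffices to prove that each facet of $\tilde{\sigma}$ is simplicial and smooth, which then gives \eqref{eq:smooth} for every face of codimension $\codim\mathcal{F}$.

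To carry this out I would first enumerate the facets by writing a supporting functional as $u=(m,c_0,\dots,c_p)$, with $m$ a functional on $L_{\R}$, subject to $c_k\ge 0$ and $\langle m,d_k\rangle+c_k\ge 0$; the face it defines contains $a_k$ iff $c_k=0$ and $b_k$ iff $\langle m,d_k\rangle+c_k=0$. A dimension count then splits the facets into two types. When $m\neq 0$, the edge directions $d_0,\dots,d_p$ of the convex polygon $P$ are pairwise non-parallel, so at most one $d_{k_0}$ is orthogonal to $m$; taking $m=d_{k_0}^{\perp}$ and $c_k=\max(0,-\langle m,d_k\rangle)$ produces a facet generated by $a_{k_0},b_{k_0}$ together with exactly one of $a_k,b_k$ for each $k\neq k_0$. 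This facet is simplicial, and a short lattice computation shows its generators span a saturated sublattice as soon as $d_{k_0}$ is primitive; primitivity of the $d_j$ is exactly what the hypothesis that $X_\sigma$ has an isolated singularity supplies, so these facets are smooth.

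The remaining facets arise when $m=0$: then the face is $\cone\{a_k,b_k:c_k=0\}$, and taking $c_{k_0}>0$ with $c_k=0$ otherwise gives the face $F_{k_0}:=\cone\{a_k,b_k:k\neq k_0\}$ obtained by omitting a single Minkowski summand. Its generators project onto the $e_k$ with $k\neq k_0$, while the differences $b_k-a_k=(d_k,0)$ span the $L_{\R}$-directions, so $F_{k_0}$ is smooth precisely when the edge vectors $\{d_k:k\neq k_0\}$ form part of a $\Z$-basis of $L$. This is the crucial step and the main obstacle: unlike the type-$(A)$ facets it is not a consequence of primitivity alone, but requires the unimodularity of the minors $\det(d_i,d_j)$, i.e.\ that the summands appearing together in each $F_{k_0}$ generate $L$ over $\Z$. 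This is exactly the \emph{essential property} arranged for the cones $\sigma'$ in \cite{vC,vC1}, where the antipodal symmetry $d_{j+p+1}=-d_j$ is used to control these determinants. Once both families of facets are shown to be smooth, every proper face of $\tilde{\sigma}$ is a face of a smooth cone, \eqref{eq:smooth} holds, and $Y_{\tilde{\sigma}}$ has only the asserted isolated singularity.
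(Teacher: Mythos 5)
Your overall strategy is the same as the paper's: reduce (\ref{eq:smooth}) to the facets of $\tilde{\sigma}$, enumerate facets by supporting functionals $\alpha=\beta+\sum_j c_je_j^*$ with $\beta\in L^*_{\R}$, $c_j\geq 0$, and check each facet for smoothness. Your type-(A) case ($m\neq 0$) coincides with the paper's list $\{\tau'_0,\ldots,\tau'_i,\tau_i,\ldots,\tau_p\}$, and your verification (simplicial, and saturated because $d_{k_0}$ is primitive) is exactly the computation the paper calls ``easy to see''. In fact your enumeration is \emph{more} complete than the paper's: the functional $e_{k_0}^*$ lies in $\tilde{\sigma}^\vee$ and its kernel cuts out $F_{k_0}=\cone\{a_k,b_k:k\neq k_0\}$, which for $p\geq 2$ has dimension $p+2$ and hence is a genuine facet, whereas the paper's proof asserts that the type-(A) facets are ``the only possibilities''. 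So you have correctly located the facets on which everything hinges.

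The gap is that you never prove smoothness of these $m=0$ facets, and the step cannot be completed as you set it up. You state the correct criterion (the generators of $F_{k_0}$ must be part of a $\Z$-basis of $N'$, forcing $\{d_k:k\neq k_0\}$ to be part of a $\Z$-basis of $L$), but then silently replace it by the weaker, inequivalent condition that the minors $\det(d_i,d_j)$ be unimodular, and you discharge even that by appeal to an unspecified ``essential property'' of \cite{vC,vC1}; that is an assertion, not an argument. Moreover, no argument can work in the stated generality. The facet $F_{k_0}$ has $2p$ extreme rays but dimension $p+2$, so for $p\geq 3$ it is not even simplicial, and (\ref{eq:smooth}) fails outright; the families in this example have arbitrarily many edges, so $p\geq 3$ does occur. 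Even for $p=2$, where $F_{k_0}$ is simplicial, the required unimodularity does not follow from the stated hypotheses (central symmetry, primitive edges, isolated singularity, simply connected link): the centrally symmetric hexagon with vertices $(0,-2),(1,-2),(3,1),(0,2),(-1,2),(-3,-1)$ has primitive edge vectors $d_0=(1,0)$, $d_1=(2,3)$, $d_2=(-3,1)$ and simply connected link, yet $\det(d_0,d_1)=3$, so $F_2$ is a simplicial non-smooth cone and $Y_{\tilde{\sigma}}$ is singular along the corresponding one-dimensional orbit. Thus the ``crucial step'' you flag is precisely where the statement needs extra hypotheses (e.g.\ $p=2$ together with pairwise unimodular summands), or else a weaker conclusion restricted to the faces whose orbits can actually meet a generic fibre $X_{\ol{c}}$, which is all that the application in Proposition~\ref{prop:smooth} uses; neither your proposal nor the paper's own proof supplies this.
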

\begin{proof}
Let $\alpha\in\tilde{\sigma}^{\vee}$.  Then $\alpha =\beta +\sum_{j=0}^p c_j e_j^*$ where
$\beta\in (\R^2)^*$ and $c_j \geq 0, 0\leq j\leq p$.  The cone $\tilde{\sigma}$ is generated
by $\tau_j =( 0, e_j)$ and $\tau'_j =(d_j, e_j)$ for $0\leq j\leq p$.  Thus
$\beta(d_j)+c_j \geq 0$ for $0\leq j\leq p$.

A facet of $\tilde{\sigma}$ is given by choice of $\alpha$ vanishing on $p+2$ generators.
The only possibilities are $\{\tau'_0,\ldots,\tau'_i,\tau_i,\ldots,\tau_p \}$, where
$\beta(d_i) =0$, $c_j =0, i \leq j\leq p$, and $\beta(d_j) =-c_j$ for $0\leq j\leq i$.
And it is easy to see that the face spanned by this set satisfies (\ref{eq:smooth}).
\end{proof}

By Proposition~\ref{prop:smooth} for generic $\ol{c}\in\C^p$ the affine variety
$X_{\ol{c}}\subset Y_{\tilde{\sigma}}$ is smooth.  And by Corollary~\ref{cor:main} there
is a complete Ricci-flat K\"{a}hler metric asymptotic to a Calabi ansatz metric.

This provides infinitely many examples of geometric transitions.  As shown in~\cite{vC2,vC3}
the toric cone $X_\sigma$ admits a resolution $\check{X}$ with Ricci-flat K\"{a}hler metric
asymptotic to the K\"{a}hler cone metric on $X_\sigma$ of Theorem~\ref{thm:FOW}.  And $X$ can
also be smoothed to an affine variety $\hat{X}$ with a Ricci-flat metric with similar asymptotic
properties.
\end{xpl}

\bibliographystyle{plain}

\end{document}